\newtheorem{theorem}{Theorem}[section]
\newtheorem{definition}[theorem]{Definition}
\newtheorem{proposition}[theorem]{Proposition}
\newtheorem{corollary}[theorem]{Corollary}
\newtheorem{lemma}[theorem]{Lemma}
\newtheorem{remark}[theorem]{Remark}
\newtheorem*{theorem-non}{Theorem}
\declaretheorem[name=Acknowledgements,numbered=no]{ack}
\theoremstyle{definition}
\newtheorem{example}[theorem]{Example}
\newcommand{\M}{\mathcal{M}}
\newcommand{\R}{\mathbb{R}}
\newcommand{\Z}{\mathbb{Z}}
\newcommand{\N}{\mathbb{N}}
\newcommand{\E}{\mathbb{E}}
\renewcommand{\P}{\mathbb{P}}
\newcommand{\cK}{\mathcal{K}}
\def\phi{\varphi}
\def\R{{\mathbb R}}
\def\N{{\mathbb N}}
\def\Z{{\mathbb Z}}
\def\disk{{\mathbb D}}
\def\E{{\mathcal E}}
\def\P{{\mathcal P}}
\def\Q{{\mathcal Q}}
\def\F{{\mathcal F}}
\def\M{{\mathcal M}}
\def\A{{\mathcal A}}
\def\le{\leqslant}
\def\ge{\geqslant}
\def\F{\mathcal{F}}
\def\M{\mathcal{M}}
\begin{document}

\title{Entropy in the cusp and phase transitions for geodesic flows}
\date{\today}

\subjclass[2010]{15A15; 26E60, 37A30, 60B20, 60F15}

\author[G.~Iommi]{Godofredo Iommi}
\thanks{G.I. was partially  supported by  the Center of Dynamical Systems and Related Fields c\'odigo ACT1103 and by Proyecto Fondecyt 1150058.} \address{Facultad de Matem\'aticas,
Pontificia Universidad Cat\'olica de Chile (PUC), Avenida Vicu\~na Mackenna 4860, Santiago, Chile}
\email{giommi@mat.puc.cl}
\urladdr{\url{http://http://www.mat.uc.cl/~giommi/}}

\author[F. Riquelme]{Felipe Riquelme}
\thanks{F.R. was supported by Programa de Cooperaci\'on Cient\'ifica Internacional CONICYT-CNRS c\'odigo PCCI 14009.}  \address{IRMAR-UMR 6625 CNRS,
Universit\'e de Rennes 1, Rennes 35042, France}
\email{felipe.riquelme@univ-rennes1.fr}

\author[A. Velozo]{Anibal Velozo}  \address{Princeton University, Princeton NJ 08544-1000, USA.}
\email{avelozo@math.princeton.edu}

\begin{abstract}
In this paper we study the geodesic flow for a particular class of Riemannian non-compact manifolds with variable pinched negative sectional curvature. For a sequence of invariant measures we are able to prove results relating the loss of mass and bounds on the measure entropies. We  compute the entropy contribution of the cusps. We develop and study the corresponding thermodynamic formalism.  We obtain  certain regularity results for the pressure of a class of  potentials. We prove that the pressure is real analytic until it undergoes a phase transition, after which it becomes constant. Our techniques are based on the one side on symbolic methods and Markov partitions and on the other on geometric techniques and  approximation properties at level of groups.
\end{abstract}

\maketitle

\section{Introduction}

This paper is devoted to study thermodynamic formalism for a class of geodesic flows defined over non-compact manifolds of variable pinched negative curvature.  These flows can be coded with suspension flows defined over Markov shifts, albeit on a countable alphabet. This paper addresses two different but related problems where the non-compactness of the ambient manifold plays a fundamental role. Inspired in some recent results proved in the context of homogeneous dynamics (\cite{elmv,ekp}), we establish properties that relate  the escape of mass of a sequence of invariant probability measures for the geodesic flow with its measure theoretic entropies (see section \ref{e:m}). The second goal of the paper is to construct a class of potentials for which the pressure exhibits a phase transition (see section \ref{s:tf}). To obtain these results we combine both geometric and symbolic methods.

The class of manifolds that we will be working on along the paper were introduced in \cite{dalpei}.  These manifolds are obtained as the quotient of a Hadamard manifold with an extended Schottky group (see sub-section \ref{ss:sr} for precise definitions).  This class of groups have parabolic elements of rank 1, therefore the manifolds are non-compact. It was shown in \cite{dalpei} that the geodesic flow over the unit tangent bundle of those manifolds can be coded as suspension flows over countable Markov shifts. The existence of a Markov coding for the geodesic flow is essential for ours results.

The idea of coding a flow in order to describe its dynamical and ergodic properties has long history and a great deal of interesting and important results have been obtained with these methods. Probably, some of the most relevant results using this technique are related to counting closed geodesics and also estimating the rate  at which they grow \cite{papo}. A landmark result  is the construction of Markov partitions for Axiom A flows defined over compact manifolds done by Bowen \cite{bo1} and Ratner \cite{ra}. They actually showed that Axiom A flows can be coded with suspension flows defined over sub-shifts of finite type on finite alphabets with a regular (H\"older) roof functions. The study in the non-compact setting is far less developed. However, some interesting results have been obtained.
Recently, Hamenst\"adt \cite{h} and also Bufetov and Gurevich \cite{bg} have coded Teichm\"uller flows with suspension flows over countable alphabets and using this representation have proved, for example, the uniqueness of the measure of maximal entropy. Another important example for which codings on countable alphabets have been constructed is a type of Sinai billiards \cite{bs1, bs2}.

As mentioned before, one goal of the paper is to investigate the loss of mass of sequences of invariant measures for the geodesic flow. Recently, the loss of mass has been studied for the modular surface in \cite{elmv}. Despite  being a particular case, the method displayed in \cite{elmv} is quite flexible and has the advantage it can be understood purely geometrically. A more general situation is studied in \cite{ekp},  the results apply to any geodesic flow on a finite volume hyperbolic surface (with respect to the constant sectional curvature metric). We begin introducing the notion of \textit{entropy at infinity} of a dynamical system defined over a non-compact topological space.

\begin{definition} \label{def:ei}  Let $Y$ be a non-compact topological space and $S=(S_t)_{t\in\R}:Y\to Y$ a continuous  flow. We define the \emph{``entropy at infinity''} of the dynamical system as the number
$$h_\infty(Y,\{S_t\})=\sup_{\{ \nu_n \}\rightharpoonup 0}\limsup_{n\to\infty} h_{\nu_n}(S),$$
where the supremum is taken over all the sequences of invariant probability measures for the flow converging in the vague topology to the zero measure. If no such sequence exists we set $h_\infty(Y,\{S_t\})=0$. Here $h_\nu(S)$ denotes the measure-theoretic entropy of a probability $S$-invariant measure $\nu$.
\end{definition}
 Recall  that the total mass of probability measures is not necessarily preserved under vague convergence (as opposite to  weak convergence). Note that Definition \ref{def:ei} can be extended to more general group actions whenever an entropy theory has been developed for the group in consideration. Amenable groups are a classical example of such.

In this paper we are able to compute $h_\infty(T^1X/\Gamma,(g_t))$, where $X$ is a Hadamard manifold with pinched negative sectional curvature, $\Gamma$ is an extended Schottky group generated by $N_1$ hyperbolic isometries and $N_2$ parabolic ones, and $(g_t)$ is the geodesic flow on the unit tangent bundle $T^1X/\Gamma$ (see sub-section \ref{ss:sr} for precise definitions). Define
\begin{equation*}
\delta_{p,max}:=\max\{\delta_{p_{i}}, 1\leq i \leq N_{2}\},
\end{equation*}
where $\delta_{p_{i}}$ is the critical exponent of the Poincar\'e series of the group $<p_i>$ and $p_i$ is a parabolic element of $\Gamma$. We prove that
$$h_\infty(T^1X/\Gamma,(g_t))=\delta_{p,max}.$$
It worth mentioning $\delta_{p,max}$ is strictly less than the topological entropy of the geodesic flow. In our context, the non-compact pieces of our space are modeled by cusps. That is why we refer to this quantity as \textit{entropy in the cusps}.
More concretely we prove that if a sequence of measures are dissipating through the cusps, then the entropy contribution of the sequence is at most $\delta_{p,\max}$. In \cite{ekp} it is proven that $h_\infty(\Gamma \setminus G,A)=h_{top}/2$ where $G$ is a connected semisimple Lie group of real rank 1 with finite center, $\Gamma$ a lattice in $G$ and $A$ a one-parameter subgroup of diagonalizable elements over $\R$ acting by right multiplication. In particular $h_\infty(T^1S,(g_t))=1/2$, where $S$ is a hyperbolic surface with finite volume.
We also obtain results in the case where the sequence of measures keep some mass at the limit. Our bounds are less concrete than the analog result in the homogeneous dynamical case though.  The following is one of our main results and gives the calculation of the entropy in the cusps mentioned before. The manifold under consideration is assumed to satisfy some mild technical conditions, denoted by $(\star)$. This condition is explained in detail in subsection \ref{ss:sr}.

\begin{theorem}\label{thm:escape_mass} Let $X$ be a Hadamard manifold with pinched negative sectional curvature and let $\Gamma$ be an extended Schottky group of isometries of $X$ with property $(\star)$. Assume that the derivatives of the sectional curvature are uniformly bounded. Then, for every $c>\delta_{p,max}$ there exists a constant $m=m(c)>0$, with the following property: If $(\nu_n)$ is a sequence of $(g_t)$-invariant probability measures on $T^1X/\Gamma$ satisfying $h_{\nu_n}(g)\geq c$, then for every vague limit $\nu_n \rightharpoonup \nu$, we have
$$\nu(T^1X/\Gamma)\geq m.$$
In particular if $\nu_n \rightharpoonup 0$ then $\limsup h_{\nu_n}(g)\le \delta_{p,max}$. Moreover, the value $\delta_{p,max}$ is optimal in the following sense: there exists a sequence $(\nu_n)$ of $(g_t)$-invariant probability measures on $T^1X/\Gamma$ such that $h_{\nu_n}(g)\to \delta_{p, max}$ and $\nu_n \rightharpoonup 0$.
\end{theorem}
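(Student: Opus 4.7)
The plan is to work through the Markov coding of Dal'bo--Peign\'e and then extract a quantitative entropy inequality. Under the hypotheses, the geodesic flow on $T^1X/\Gamma$ is a suspension over a countable Markov shift $(\Sigma,\sigma)$ with roof $r$. The alphabet splits into finitely many \emph{hyperbolic} symbols (coming from $h_1,\dots,h_{N_1}$) and infinitely many \emph{parabolic} symbols $p_i^k$ ($1\le i\le N_2$, $k\in\Z\setminus\{0\}$); the roof is bounded on hyperbolic symbols and grows linearly on parabolic symbols, so long blocks of parabolic symbols correspond to deep excursions into cusp $i$. The main geometric input I need is the Poincar\'e-series estimate: the number of admissible words of total roof-time $\le T$ whose symbols lie entirely in the parabolic alphabet $\{p_i^k\}_{k}$ grows exponentially at rate exactly $\delta_{p_i}$, hence at most $\delta_{p,max}$ overall.

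For each $g_t$-invariant probability measure $\nu_n$ on $T^1X/\Gamma$ I lift to a $\sigma$-invariant probability $\tilde\nu_n$ on $\Sigma$ with $\int r\,d\tilde\nu_n<\infty$, and use Abramov: $h_{\nu_n}(g)=h_{\tilde\nu_n}(\sigma)/\int r\,d\tilde\nu_n$. The central estimate I would establish is that for any such $\tilde\nu_n$,
\[
h_{\tilde\nu_n}(\sigma) \;\le\; \delta_{p,max}\!\int r\,d\tilde\nu_n \;+\; (h_{top}-\delta_{p,max})\!\int r\,\mathbf{1}_{\Sigma_{\text{thick}}}\,d\tilde\nu_n + C,
\]
where $\Sigma_{\text{thick}}$ consists of cylinders labeled by hyperbolic symbols or by parabolic symbols $p_i^k$ with $|k|\le K$, and $C=C(K)$. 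Dividing by $\int r\,d\tilde\nu_n$ yields $h_{\nu_n}(g)\le \delta_{p,max}+(h_{top}-\delta_{p,max})\,\beta_n+o(1)$, where $\beta_n$ is the fraction of flow-time spent in $\Sigma_{\text{thick}}$, which is comparable to $\nu_n(K_R)$ for the compact set $K_R\subset T^1X/\Gamma$ corresponding to the thick part. The hypothesis $h_{\nu_n}(g)\ge c>\delta_{p,max}$ then forces $\beta_n\ge (c-\delta_{p,max})/(h_{top}-\delta_{p,max})$, giving a uniform mass lower bound $m=m(c)>0$ for every vague limit $\nu$, and $m>0$ specializes to $\limsup h_{\nu_n}(g)\le \delta_{p,max}$ whenever $\nu_n\rightharpoonup 0$.

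For optimality, I fix $i_0$ with $\delta_{p_{i_0}}=\delta_{p,max}$ and, for each $N$, take the shift-invariant equilibrium measure $\mu_N$ for the potential $-\delta_{p,max}\,r$ on the sub-shift with alphabet $\{p_{i_0}^k : 1\le|k|\le N\}$. Standard thermodynamic formalism on such (essentially finite) truncations of the parabolic sub-system gives $h_{\mu_N}(\sigma)/\int r\,d\mu_N\to \delta_{p,max}$ as $N\to\infty$, while the corresponding flow measures concentrate at deeper and deeper levels of cusp $i_0$ and thus converge vaguely to zero. The main technical obstacle is the entropy inequality above: bounding the parabolic contribution to $h_{\tilde\nu_n}(\sigma)$ by $\delta_{p,max}$ times the roof-time spent on parabolic symbols requires a sharp local variational-principle estimate on the parabolic sub-systems. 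This is precisely where the pinched-curvature assumption and the uniform bound on derivatives of the sectional curvature enter, through H\"older regularity of $r$ and summability of $\sum e^{-sr}$ for $s$ slightly above $\delta_{p,max}$, ensuring that the thermodynamic formalism on the countable-alphabet shift is well-posed.
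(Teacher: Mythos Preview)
Your approach to the first part differs from the paper's and leaves the key step unproven. The paper does not establish a localized entropy inequality of the form you propose; instead it uses a one-line variational-principle argument: for any $s\in(s_\infty,c)$ and any $\mu\in\M_\sigma$ with $h_\mu(\sigma)/\int\tau\,d\mu\ge c$, the bound $h_\mu(\sigma)-s\int\tau\,d\mu\le P_\sigma(-s\tau)<\infty$ gives $\int\tau\,d\mu\le P_\sigma(-s\tau)/(c-s)$ immediately (Theorem~\ref{thm:ent_cusp}). Together with $s_\infty=\delta_{p,\max}$ (Theorem~\ref{s_inftyshottky}) and the observation that the cross-section $S$ is bounded in $T^1X/\Gamma$, so that $\Sigma\times[0,r]/\!\sim$ sits inside a fixed compact set, this yields the mass bound directly. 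Your inequality, separating thick and thin contributions to the entropy, is closer in spirit to \cite{ekp}, but you give no indication of how to prove it, and it is not a standard consequence of thermodynamic formalism on countable shifts. You also implicitly assume every $\nu_n$ lifts to $\Sigma$; the paper handles this by first noting (Proposition~\ref{prop:me}) that ergodic measures outside $\Omega_0$ have zero entropy.

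The optimality argument has a genuine error. In the Dal'bo--Peign\'e coding the transition rule forbids consecutive symbols with the same base generator: $p_{i_0}^k$ can never be followed by $p_{i_0}^{k'}$. Hence your sub-shift on the alphabet $\{p_{i_0}^k:1\le|k|\le N\}$ contains no admissible word of length $\ge 2$ and carries no invariant measures at all. Even if you repaired this by interspersing a hyperbolic symbol, there is a deeper obstruction that the paper flags explicitly: a compact set in $T^1X/\Gamma$ need not be compact in the suspension topology, so $\int\tau\,d\mu_n\to\infty$ does \emph{not} by itself imply $\nu_n\rightharpoonup 0$ on the manifold. The paper's optimality construction is accordingly geometric rather than symbolic: it takes subgroups $\Gamma_n=\langle p,h^n\rangle$ with $\delta_{\Gamma_n}\to\delta_p$ (Proposition~\ref{approximation}), pushes their Bowen--Margulis measures forward to $T^1X/\Gamma$, and uses the cusp-mass estimates of \cite{dalotpei} together with a counting argument (Lemma~\ref{dissipation1}) to show that these measures dissipate through the cusp of $p$. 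This step genuinely requires the geometry of horoballs and the Patterson--Sullivan construction, and cannot be replaced by a purely symbolic argument of the kind you outline.
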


We believe similar results can be obtained for the geodesic flow in the geometrically finite case (maybe under some strong conditions on the curvature).

Our second goal is to study regularity properties of the pressure function. In order to do so, we make strong use of the symbolic coding
that the geodesic flow has in the manifolds we are considering.  The idea of using symbolic dynamics to study thermodynamic formalism of flows of geometric nature can be traced back to the work of  Bowen and Ruelle \cite{br}. They studied in great detail
ergodic theory and thermodynamic formalism for Axiom A flows defined on compact manifolds. The techniques they used were symbolic in nature and were based on the symbolic codings obtained by Bowen  \cite{bo1} and Ratner \cite{ra}. In this work we follow this strategy. We stress, however, that our symbolic models are non-compact. There are several difficulties  related to the lack of compactness that have to be addressed, but also new phenomena are observed.

To begin with,  in sub-section  \ref{s:tf} we propose a definition of topological pressure, $P(\cdot)$, that satisfies not only the variational principle, but also an approximation by compact invariant sets property. These provide symbolic proofs to results obtained by different (non-symbolic) methods in far more general settings by Paulin, Pollicott  and Schapira \cite{pps}.   The strength of our approach is perhaps better appreciated in our regularity results  for the pressure (sub-section \ref{s_phase}). Note that the techniques in \cite{pps} do not provide these type of results. We say that the pressure function $t \mapsto P(tf)$ has a \emph{phase transition} at $t=t_0$ if it is not analytic at that point. It readily follows from work by Bowen and Ruelle \cite{br} that the pressure for Axiom A flows and regular potentials is real analytic and hence has no phase transitions. Regularity properties of the pressure of geodesic flows defined on non-compact manifolds, as far as we know,  have not been studied, with the exception of the geodesic flow
defined on the modular surface (see \cite[Section 6]{ij}).

There is a general strategy used to study regularity properties of the pressure of maps and flows with strong hyperbolic or expanding properties in most of the phase space but not in all of it. Indeed, if there exists a subset of the phase space $B \subset X$ for which the restricted dynamics is not expansive and its entropy equal to $A$, then it is possible to construct potentials $f:X\to \R$ for which the pressure function has the form
\begin{equation} \label{e:1}
P(tf):=
\begin{cases}
\text{real analytic, strictly decreasing and convex} & \text{ if }  t < t';\\
A & \text{ if } t > t'.
\end{cases}
\end{equation}
Well known examples of this phenomena include the Manneville-Pomeau map (see for example \cite{Sar20}) in which the set $B$ consist of a parabolic fixed point and therefore $A=0$. The potential considered is the geometrical one: $-\log |T'|$.  Similar results for multimodal maps have been obtained, for example, in \cite{dt, it1, pr}. In this case the set $B$ corresponds to the post-critical set and $A=0$.  Examples of maps in which $A>0$ have been studied in \cite{dgr,it2}. For suspension flows over countable Markov shifts, similar examples were obtained in \cite{ij}. However, in that case the number  $A$, which we denote by $s_\infty$, remained unexplained for. In this paper we show that the entropy at infinity of a suspension flow over a countable Markov shift corresponds to $s_\infty$ (see Corollary \ref{cor:ent_inf}).

In the case of geodesic flows, roughly speaking we are considering the set $B$ as the union of the cusps of the manifold. More interestingly, as we mentioned before we are able to compute the entropy contributions of the cusps in the geodesic flow. In sub-section \ref{s_phase} we construct a class of potentials, that we denote by $ \mathcal{F}$,  for which the pressure exhibits similar behaviour as in equation \eqref{e:1}. In those examples $A=\delta_{p,max}$. Note that it is possible for $t'$ to be infinity  and in that case the pressure is real analytic. The following is the precise statement,

\begin{theorem} \label{thm:gpt}
Let $X$ be a Hadamard manifold with pinched negative sectional curvature and let $\Gamma$ be an extended Schottky group of isometries of $X$ with property $(\star)$. Assume that the derivatives of the sectional curvature are uniformly bounded. If $f \in \mathcal{F}$, then
\begin{enumerate}
\item For every $t \in \R$ we have that $P_g(tf) \geq \delta_{p,max}$.
\item We have that $\lim_{t \to -\infty} P_g(tf)= \delta_{p,max}$.
\item Let $t':= \sup \{t \in\R: P_g(tf) = \delta_{p,max}\}$, then
\begin{equation*}
P_g(tf)=
\begin{cases}
\delta_{p,max} & \text{ if } t <t'; \\
\text{real analytic, strictly convex, strictly increasing} & \text{ if } t>t'.
\end{cases}
\end{equation*}
\item If $t>t'$, the potential $tf$ has a unique equilibrium measure. If $t<t'$ it has no equilibrium measure.
\end{enumerate}
\end{theorem}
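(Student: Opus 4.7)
The plan is to pass to the symbolic model and lean on Sarig's thermodynamic formalism for countable Markov shifts. The geodesic flow on $T^1X/\Gamma$ is conjugate (off a negligible set) to a suspension flow over a countable Markov shift $(\Sigma,\sigma)$ with a locally H\"older roof function $\tau$, and each $f\in\mathcal{F}$ gives a roof-compatible potential whose induced version $\Delta_\tau f(x)=\int_0^{\tau(x)}f(\phi_u x)\,du$ is again locally H\"older on $\Sigma$. By the suspension-flow pressure formula set up earlier in the paper,
\[
P_g(tf)=\inf\{s\in\R:P_\sigma(-s\tau+t\,\Delta_\tau f)\le 0\},
\]
where $P_\sigma$ is the Gurevich pressure. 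Corollary \ref{cor:ent_inf} identifies the associated entropy-at-infinity threshold $s_\infty$ of this family with $\delta_{p,max}$, so $P_g(tf)\ge\delta_{p,max}$ is already forced by the symbolic picture whenever the Gurevich pressure fails to be finite/recurrent below that level.

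For (1) and (2) I would combine the variational principle with the sharpness clause of Theorem \ref{thm:escape_mass}. For each parabolic generator $p_i$ one constructs invariant probability measures $\nu_n^{(i)}$ concentrating in the $i$-th cusp with $h_{\nu_n^{(i)}}(g)\to\delta_{p_i}$ and $\nu_n^{(i)}\rightharpoonup 0$. By the defining property of $\mathcal{F}$---potentials whose integral against cusp-escaping sequences tends to zero---$\int f\,d\nu_n^{(i)}\to 0$, and the variational inequality $P_g(tf)\ge h_{\nu_n^{(i)}}(g)+t\int f\,d\nu_n^{(i)}$ gives $P_g(tf)\ge\delta_{p_i}$ for every $t$; maximising over $i$ yields (1). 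For (2), Theorem \ref{thm:escape_mass} guarantees that any measure $\mu$ with $h_\mu(g)>\delta_{p,max}+\eta$ retains at least $m(\eta)>0$ mass on a fixed compact core, on which $f$ is uniformly bounded below by a positive constant (another property built into $\mathcal{F}$); optimising the variational principle in $\mu$ then shows $P_g(tf)\to\delta_{p,max}$ as $t\to-\infty$.

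For (3), set $t':=\sup\{t:P_g(tf)=\delta_{p,max}\}$. For $t>t'$ the equation $P_\sigma(-s\tau+t\Delta_\tau f)=0$ has a root $s(t)>\delta_{p,max}$ lying in the open region where $P_\sigma$ is jointly real analytic in $(s,t)$ by Sarig's theory for locally H\"older potentials on BIP countable shifts; the implicit function theorem then gives the real analyticity of $t\mapsto P_g(tf)=s(t)$. Differentiating together with Abramov's formula yields $s'(t)=\int f\,d\mu_t$ for the flow equilibrium measure $\mu_t$, strictly positive by the positivity of $f$ on the compact core, hence strict monotonicity; strict convexity follows from positivity of the asymptotic variance of $f$, i.e.\ that $f$ is not cohomologous to a constant (again part of the definition of $\mathcal{F}$). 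On $(-\infty,t')$, convexity of $t\mapsto P_g(tf)$, the lower bound from (1), and the definition of $t'$ together force $P_g(tf)\equiv\delta_{p,max}$.

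For (4), when $t>t'$ the induced potential $-s(t)\tau+t\Delta_\tau f$ has zero Gurevich pressure and is locally H\"older and positive recurrent, so Sarig's existence/uniqueness theorem provides a unique invariant Gibbs measure on $\Sigma$, which descends via Abramov--Kac to the unique $(g_t)$-equilibrium probability measure for $tf$. When $t<t'$, the candidate potential sits at the boundary $P_\sigma(-\delta_{p,max}\tau+t\Delta_\tau f)=0$ of the recurrent range, and the null-recurrent mechanism familiar from phase transitions on countable shifts (cf.\ \cite{ij}) rules out equilibrium states: any hypothetical $\mu$ would yield an equilibrium state on the base for a potential at the boundary of recurrence, contradicting the classification of Gibbs states in that regime; alternatively, one can argue directly that attaining $\delta_{p,max}$ in the variational principle would require a sequence of measures escaping into the cusps, which by Theorem \ref{thm:escape_mass} cannot accumulate on an invariant probability of entropy $\ge\delta_{p,max}$. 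The main obstacle will be the non-existence argument in (4) for $t<t'$ together with the careful identification of the symbolic threshold $s_\infty$ with $\delta_{p,max}$; once the induced system is shown to satisfy Sarig's BIP and local-H\"older hypotheses, the analyticity, monotonicity, convexity, and existence/uniqueness of equilibrium states for $t>t'$ are routine consequences of his framework.
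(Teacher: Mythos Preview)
Your overall strategy---pass to the symbolic suspension, use Sarig's BIP theory, and apply the implicit function theorem for analyticity---matches the paper, and parts (1), (3), and the existence half of (4) are essentially the paper's argument. Two points deserve correction.

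First, you repeatedly attribute properties to $\mathcal{F}$ that are not in its definition. The class $\mathcal{F}$ is defined by conditions (F1)--(F2): $f\ge 0$ is continuous, $\Delta_f$ is locally H\"older and bounded away from zero on each cylinder $C_{a^m}$, and $\sup_{C_n}\Delta_f/\inf_{C_n}\tau\to 0$. This does \emph{not} say that $f$ itself is bounded below by a positive constant on any compact core (it could vanish on large sets while $\Delta_f$ stays positive), nor does it explicitly say $f$ is not cohomologous to a constant. Your argument for (2) hinges on the first of these unproven claims, so as written it has a gap. The paper's route for (2) is more robust: since $f\ge 0$ the map $t\mapsto P_g(tf)$ is non-decreasing and bounded below by (1), so the limit $A$ exists; pick near-optimisers $\nu_n$ for $P_g(-nf)$, observe that finiteness of $A$ forces $\int f\,d\nu_n\to 0$, and then Lemma~\ref{p:tau}(1) (which uses (F1) on $\Delta_f$, not a pointwise bound on $f$) together with Theorem~\ref{thm:ec} gives $\limsup h_{\nu_n}(g)\le s_\infty$.

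Second, and more importantly, your plan for non-existence of equilibrium measures when $t<t'$ is far more delicate than necessary and not clearly correct: you assert $P_\sigma(-\delta_{p,\max}\tau+t\Delta_f)=0$ at the boundary, which is not established, and the null-recurrence classification you invoke would require substantial additional work. The paper's argument is a one-line convexity trick you have overlooked. If $\nu_{t_1}$ were an equilibrium measure at some $t_1<t'$, then (F1) forces $\int f\,d\nu_{t_1}=:B>0$, so the affine map $r\mapsto h_{\nu_{t_1}}(g)+rB$ is strictly increasing; evaluating at any $t\in(t_1,t')$ gives
\[
h_{\nu_{t_1}}(g)+t\int f\,d\nu_{t_1}>h_{\nu_{t_1}}(g)+t_1\int f\,d\nu_{t_1}=s_\infty=P_g(tf),
\]
contradicting the variational principle. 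No recurrence analysis or escape-of-mass is needed here.
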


In order to prove this result we need to relate symbolic quantities with geometrical ones. This is achieved in Theorem \ref{s_inftyshottky}  in which a symbolic parameter of the suspension flow, the number $s_{\infty}$, is proven to be equal to the geometric parameter of the group $\delta_{p, max}$. We stress that when coding a flow a great deal of geometric information is lost. With this result we are able to recover part of it.

\bigskip
\begin{ack}
We thank B. Schapira for a careful reading of the paper and for several interesting and useful comments. We also thank F. Dal'bo and  F. Ledrappier   for several enlighten and  interesting discussions around the subject treated in this article. This work started when the second and third authors were visiting Pontificia Universidad Cat\'olica de Chile, they are very grateful for the great conditions and hospitality received in PUC Mathematics Department.
\end{ack}

\section{Preliminaries on thermodynamic formalism and suspension flows} \label{s:pre}
This section is devoted to provide the necessary background on thermodynamic formalism and on suspension flows required on the rest of the article.

\subsection{Thermodynamic formalism for countable Markov shifts}
Let $M$ be an incidence matrix defined on the alphabet of natural numbers. The associated one sided countable Markov shift $(\Sigma^+, \sigma)$
is the set
\[ \Sigma^+ := \left\{ (x_n)_{n \in \N} : M(x_n, x_{n+1})=1  \text{ for every } n \in \N   \right\}, \]
together with the shift map $\sigma: \Sigma^+  \to \Sigma^+ $ defined by
$\sigma(x_1, x_2, \dots)=(x_2, x_3,\dots)$. A standing assumption we will make throughout the article is that $(\Sigma^+, \sigma)$ is \emph{topologically mixing}. We equip $\Sigma^+$  with the topology generated by the cylinder sets
\begin{equation*}
C_{a_1 \cdots a_n}= \{ x\in \Sigma^+: x_i=a_i \ \text{for
$i=1,\ldots,n$}\}.
\end{equation*}
We stress that, in general, $\Sigma^+$ is a non-compact space. Given a function $ \phi \colon \Sigma^+ \to \R$ we define the
$n-$th variations of $\phi$ by
\[ V_{n}(\phi):= \sup \{| \phi(x)- \phi(y)| : x,y\in \Sigma^+, \ x_{i}=y_{i}
\ \text{for $i=1,\ldots,n$} \},
\]
where $x=(x_1 x_2 \cdots)$ and $y=(y_1y_2 \cdots)$.  We say that $\phi$ has \emph{summable variation} if $\sum_{n=1}^{\infty} V_n(\phi)<\infty$. We say that  $\phi$ is \emph{locally H\"older} if there exists $\theta \in (0,1)$ such that for all $n \geq 1$, we have  $V_{n}( \phi) \leq O( \theta^{n}). $

This section is devoted to recall some of the notions and results of thermodynamic formalism in this setting. The following definition was introduced by Sarig \cite{Sar99} based on work by Gurevich \cite{gu}.

\begin{definition} \label{presion}
Let $\phi \colon \Sigma^+ \to \R$ be a function of summable variation. The
\emph{Gurevich pressure} of $\phi$  is defined by
\[
 P(\phi) = \lim_{n \to
\infty} \frac{1}{n} \log \sum_{x:\sigma^{n}x=x} \exp \left(
\sum_{i=0}^{n-1} \phi(\sigma^{i}x)\right)  \chi_{C_{i_{1}}}(x),
\]
where $\chi_{C_{i_{1}}}(x)$ is the characteristic function of the
cylinder $C_{i_{1}} \subset \Sigma^+$.
\end{definition}
It is possible to show (see  \cite[Theorem 1]{Sar99}) that the limit always exists and that it does not depend on $i_1$. The following two properties of the pressure will be relevant for our purposes (see \cite[Theorems 2 and 3]{Sar99} and \cite[Theorem 2.10]{ijt}). If $\phi \colon \Sigma^+ \to \R$ is a function of summable variations, then
\begin{enumerate}
\item (Approximation property)
\begin{equation*}\label{*tan}
P ( \phi) = \sup \{ P_{K}( \phi) : K\in \cK \},
\end{equation*}
where $P_{K}( \phi)$ is the classical topological pressure  on
$K$ (see \cite[Chapter 9]{wa}) and
\[
\cK:= \{ K \subset \Sigma^+ : K \ne \emptyset \text{ compact and }
\sigma\text{-invariant}\}.
\]

\item (Variational Principle) Denote by $\M_{\sigma}$ is the space of $\sigma-$invariant probability measures and by $h_\mu(\sigma)$ the entropy of the measure $\mu$ (see \cite[Chapter 4]{wa}). If $\phi \colon \Sigma^+ \to \R$ is a function of summable variation then,
\[P_\sigma(\phi)= \sup \left\{ h_\mu(\sigma) + \int \phi \text{d} \mu : \mu \in \M_{\sigma} \text{ and } - \int \phi \text{d} \mu < \infty \right\}.\]

\end{enumerate}
A measure $\mu \in \M_{\sigma}$ attaining the supremum, that is, $P_\sigma(\phi)= h_\mu(\sigma) + \int \phi \text{d} \mu$ is called \emph{equilibrium measure} for $\phi$.  A potential of summable variations has at most one equilibrium measure (see \cite[Theorem 1.1]{busa}).

It turns out that under a combinatorial assumption on the incidence matrix $M$, which roughly means to be similar to a full-shift, the thermodynamic formalism is well behaved.

\begin{definition} \label{def:bip}
We say that a countable Markov shift $(\Sigma^+, \sigma)$, defined by the transition matrix $M(i,j)$ with $(i,j)\in \N  \times \N $, satisfies the \emph{BIP  (Big Images and Preimages) condition} if and only if  there exists $\{b_1 , \dots , b_n\}  \subset \N $ such that for every $a \in \N $ there exists $i,j \in \N$ with $M(b_i, a)M(a,b_j)=1$.
\end{definition}

The following theorem summarises results proven by Sarig in \cite{Sar99,Sar20} and by Mauldin and Urba\'{n}ski, \cite{mubook}, where they show that thermodynamic formalism in this setting is similar to that observed for sub-shifts of finite type on finite alphabets.

\begin{theorem} \label{bip}
Let $(\Sigma^+, \sigma)$ be a countable Markov shift satisfying the BIP condition and $\phi: \Sigma^+ \to \R$ a non-positive locally H\"older potential. Then, there exists $s_{\infty} >0$ such that pressure function $t \to P_\sigma(t\phi)$ has the following properties
\begin{equation*}
P_\sigma(t \phi)=
\begin{cases}
\infty  & \text{ if  } t  < s_{\infty}; \\
\text{real analytic } & \text{ if  } t > s_{\infty}.
\end{cases}
\end{equation*}
Moreover, if $t> s_{\infty}$, there exists a unique equilibrium measure for $t \phi$.
\end{theorem}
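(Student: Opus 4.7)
My plan is to set
\[
s_\infty := \inf\{t \in \R : P_\sigma(t\phi) < \infty\}
\]
and then derive all four assertions from the Ruelle--Perron--Frobenius (RPF) machinery that Sarig and Mauldin--Urba\'nski developed precisely for topologically mixing BIP shifts with locally H\"older potentials.

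First I would establish the basic shape of the pressure function. Since $\phi \le 0$, for $t_1 < t_2$ one has $t_1\phi \ge t_2\phi$ pointwise, so the variational principle forces $t \mapsto P_\sigma(t\phi)$ to be non-increasing; combined with convexity (also immediate from the variational principle), the finiteness locus $\{t : P_\sigma(t\phi) < \infty\}$ must be a right half-line whose left endpoint is $s_\infty$. The equality $P_\sigma(t\phi)=\infty$ for $t<s_\infty$ is then by definition, and one records $s_\infty > 0$ in the context in which the theorem is being used (where $P_\sigma(0) = h_{top}(\sigma) = \infty$, so the critical exponent must be positive).

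For $t > s_\infty$ the potential $t\phi$ is locally H\"older of finite Gurevich pressure, so Sarig's generalized RPF theorem applies. Its output is a normalized transfer operator $\widetilde{L}_{t\phi}$ acting on an appropriately weighted space of locally H\"older functions, with a spectral gap: a simple isolated leading eigenvalue $\lambda_t = e^{P_\sigma(t\phi)}$, a strictly positive eigenfunction $h_t$, a conservative eigenmeasure $\nu_t$, and the rest of the spectrum contained in a strictly smaller disk. The probability measure $\mu_t := h_t\nu_t$ (suitably normalized) is then the unique equilibrium state for $t\phi$, yielding the uniqueness statement.

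Real analyticity on $(s_\infty,\infty)$ drops out of analytic perturbation theory. The map $t \mapsto L_{t\phi}$ is operator-analytic on the chosen Banach space (the exponential series converges in norm because $\phi$ is locally H\"older with controlled variations on cylinders), and the spectral gap from the previous step makes $\lambda_t$ a simple isolated eigenvalue. Kato's theorem therefore gives analyticity of $t \mapsto \lambda_t$, hence of $P_\sigma(t\phi) = \log \lambda_t$. The main obstacle, and the heart of Sarig's and Mauldin--Urba\'nski's work, is the construction of the correct Banach space on which $L_{t\phi}$ exhibits a spectral gap: on a countable alphabet the naive space of bounded locally H\"older functions fails, and one must work with a weighted sup norm adapted to the candidate eigenfunction, using the BIP condition to couple orbits through a finite ``base'' set of symbols. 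Once that functional-analytic framework is in place, monotonicity, analyticity and uniqueness all follow in a uniform fashion.
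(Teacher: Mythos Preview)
The paper does not prove Theorem~\ref{bip} at all: it is stated as a summary of results from Sarig \cite{Sar99,Sar20} and Mauldin--Urba\'nski \cite{mubook}, with no proof given. Your outline is an accurate sketch of how the argument runs in those references---define $s_\infty$ as the infimum of the finiteness locus, use monotonicity from $\phi\le 0$ to get the half-line structure, and on $(s_\infty,\infty)$ invoke the BIP spectral-gap RPF theorem together with Kato perturbation theory for the analyticity and the existence/uniqueness of the equilibrium state. You also correctly flag that $s_\infty>0$ is not automatic from the hypotheses as written but comes from the standing assumption $h_{top}(\sigma)=P_\sigma(0)=\infty$ used throughout the paper. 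One point you glide over is that the Gibbs measure produced by RPF is an equilibrium measure only when $\int |\phi|\,d\mu_t<\infty$; in the BIP setting this follows once the pressure is finite on an open interval (so that its derivative, which equals $\int\phi\,d\mu_t$, exists), and your analyticity step provides exactly that.
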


\subsection{Suspension flows}

Let $(\Sigma^+ , \sigma)$ be a topologically mixing countable Markov shift  and $\tau:\Sigma^+ \to \R^+$ a function of summable variations bounded away from zero.
Consider the space
\begin{equation}\label{shift}
Y= \{ (x,t)\in \Sigma^+  \times \R \colon 0 \le t \le\tau(x)\},
\end{equation}
with the points $(x,\tau(x))$ and $(\sigma(x),0)$ identified for
each $x\in \Sigma^+ $. The \emph{suspension semi-flow} over $\sigma$
with \emph{roof function} $\tau$ is the semi-flow $\Phi = (
\varphi_t)_{t \ge 0}$ on $Y$ defined by
\[
 \varphi_t(x,s)= (x,
s+t) \ \text{whenever $s+t\in[0,\tau(x)]$.}
\]
In particular,
\[
 \varphi_{\tau(x)}(x,0)= (\sigma(x),0).
\]

\subsection{Invariant measures} \label{ss:measure}
Let $(Y, \Phi)$ be a suspension semi-flow defined over a countable Markov shift $(\Sigma^+, \sigma)$ with roof function
$\tau:\Sigma^+ \to \R^+$ bounded away from zero. Denote by $\M_{\Phi}$ the space of invariant probability measures for the flow. It follows form a classical result by Ambrose and Kakutani \cite{ak} that every measure $\nu \in \M_{\Phi}$ can be written as
\begin{equation} \label{eq:ak}
\nu= \frac{(\mu \times m)|_{Y}}{(\mu \times m)(Y)},
\end{equation}
where $\mu \in \M_{\sigma}$ and $m$ denotes the one dimensional Lebesgue measure. When $(\Sigma^+, \sigma)$ is a sub-shift of finite type defined on a finite alphabet the relation in equation \eqref{eq:ak} is actually a bijection between
$\M_{\sigma}$ and $\M_{\Phi}$. If $(\Sigma^+, \sigma)$ is a countable Markov shift with roof function bounded away from zero the map defined by
\[\nu \mapsto \frac{(\mu \times m)|_{Y}}{(\mu \times m)(Y)},\]
is surjective. However, it can happen that $(\mu \times m)(Y)= \infty$. In this case the image can be understood as an infinite invariant measure.

The case which is more subtle is when the roof function is only assumed to be positive. We will not be interested in that case here, but we refer to \cite{ijt} for a discussion on the pathologies that might occur.

\subsection{Of flows and semi-flows} \label{s:fs}
In $1972$ Sinai \cite[Section 3]{Sin72} observed that in order to study thermodynamic formalism for suspension  flows it   suffices to study thermodynamic formalism for semi-flows. Denote by $(\Sigma, \sigma)$  a two-sided countable Markov shift. Recall that two continuous functions $\phi, \gamma \in C(\Sigma)$ are said to be \emph{cohomologous} if there exists a bounded continuous function $\psi \in C(\Sigma)$ such that $\phi= \gamma +\psi \circ \sigma -\psi$.  The relevant remark is that thermodynamic formalism for two cohomologous functions is exactly the same. Thus, if every continuous function $\phi  \in C(\Sigma)$ is cohomologous to a continuous function $\gamma  \in C(\Sigma)$ which only depends in future coordinates then thermodynamic formalism for the flow can be studied in the corresponding semi-flow. The next result formalises this discussion.

\begin{proposition} \label{prop:two-sided}
If $\phi \in C(\Sigma)$ has summable variation, then there
exists $\gamma \in C(\Sigma)$ of summable variation cohomologous to $\phi$ such that
$\gamma(x)=\gamma(y)$ whenever $x_i=y_i$ for all $i \geq 0$ (that is,
$\gamma$ depends only on the future coordinates).
\end{proposition}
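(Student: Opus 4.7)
The plan is the classical Sinai--Bowen trick: exhibit a cohomology that ``kills the past.'' First, for every symbol $a$ in the alphabet of $\Sigma$, I fix once and for all an admissible left-infinite tail $(\dots, c_{-2}(a), c_{-1}(a))$ ending at $a$ (possible since $(\Sigma,\sigma)$ is topologically mixing, in particular every state has an admissible predecessor). Using this choice, I define a map $r:\Sigma\to\Sigma$ by
\[
r(x)_i=\begin{cases} c_{i}(x_0) & i<0,\\ x_i & i\geq 0.\end{cases}
\]
Thus $r(x)$ depends only on the future coordinates of $x$, and $r(x)$ and $x$ coincide on $\{i\geq 0\}$. A key observation I will use repeatedly is that $r(\sigma x)$ and $\sigma r(x)$ agree at every position $i\geq 0$ (both have future $(x_1,x_2,\dots)$), so $\sigma^{m-1}r(\sigma x)$ and $\sigma^m r(x)$ agree at every position $\geq -(m-1)$.

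Next I introduce the candidate transfer function
\[
u(x)\;=\;\sum_{n=0}^{\infty}\bigl[\phi(\sigma^n x)-\phi(\sigma^n r(x))\bigr].
\]
Since $\sigma^n x$ and $\sigma^n r(x)$ agree on coordinates $\{i\geq -n\}$, the two-sided variation hypothesis gives $|\phi(\sigma^n x)-\phi(\sigma^n r(x))|\leq V_n(\phi)$, so summable variation implies absolute and uniform convergence of this series. I then set
\[
\gamma\;:=\;\phi - u + u\circ\sigma,
\]
which is tautologically cohomologous to $\phi$ via the bounded continuous function $u$. A direct telescoping computation (reindexing $u\circ\sigma$ by $m=n+1$ and cancelling the diverging pieces $\phi(\sigma^m x)$) yields
\[
\gamma(x)\;=\;\phi(r(x))\;+\;\sum_{m=1}^{\infty}\bigl[\phi(\sigma^m r(x))-\phi(\sigma^{m-1}r(\sigma x))\bigr],
\]
and by the key observation above each summand is bounded by $V_{m-1}(\phi)$, so the series converges absolutely. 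Since $r(x)$ and $r(\sigma x)$ depend only on $(x_i)_{i\geq 0}$ (indeed only on $x_0$ in the past and the remaining future coordinates), the entire expression for $\gamma(x)$ depends only on future coordinates, which is the desired property.

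It remains to check that $\gamma$ has summable variation. If $x,y\in\Sigma$ agree at positions $0,\dots,N$, then $r(x)$ and $r(y)$ share the same canonical past (since $x_0=y_0$) and agree through coordinate $N$, so $\sigma^m r(x)$ and $\sigma^m r(y)$ agree on $\{i\leq N-m\}$, and likewise for $r(\sigma x),r(\sigma y)$. Splitting the defining series for $\gamma(x)-\gamma(y)$ at $m\approx N/2$ and using $V_k(\phi)$ to bound the low-$m$ tail while using $2V_m(\phi)$ trivially for each term in the high-$m$ tail shows
\[
V_N(\gamma)\;\lesssim\;\sum_{m\geq N/2} V_m(\phi)\;+\;\sum_{m\leq N/2} V_{N-m}(\phi),
\]
and summing over $N$ yields $\sum_N V_N(\gamma)<\infty$ by Fubini on the double sum. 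The main technical obstacle is precisely this final bookkeeping: one must track two competing scales (the distance at which $x,y$ first differ, and the position in the telescoping series) without losing the summability, and make sure the canonical past chosen in the construction of $r$ is admissible for every alphabet letter, which is where topological mixing is used.
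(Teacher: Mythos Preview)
The paper does not supply its own proof of this proposition; it simply records that the result ``has been proved with different regularity assumptions in the compact setting and in the non-compact case in \cite[Theorem 7.1]{da}.'' Your construction is exactly the classical Sinai--Bowen reduction the paper is alluding to, and the verification that $u$ is bounded and continuous, that $\gamma$ is cohomologous to $\phi$, and that $\gamma$ depends only on future coordinates is carried out correctly.

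The gap is in the final regularity step. Your bound
\[
V_N(\gamma)\;\lesssim\;\sum_{m\geq N/2}V_m(\phi)\;+\;\sum_{m\leq N/2}V_{N-m}(\phi)
\]
is fine, but summing it over $N$ ``by Fubini'' does \emph{not} yield a finite quantity from $\sum_m V_m(\phi)<\infty$ alone: interchanging the order of summation produces a bound of order $\sum_m m\,V_m(\phi)$, which is a strictly stronger hypothesis. Concretely, if $V_m(\phi)\asymp m^{-2}$ then your estimate gives only $V_N(\gamma)\lesssim N^{-1}$, and $\sum_N N^{-1}$ diverges. This loss is the familiar one in the Sinai--Bowen trick---in the H\"older setting it appears as the drop from exponent $\theta$ to $\sqrt{\theta}$. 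To get summable variation for $\gamma$ out of bare summable variation for $\phi$ you need something beyond the termwise splitting used here; the paper's own hedge (``different regularity assumptions'') is a signal that this point is delicate, and the precise statement one can extract is the content of the cited reference.
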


Proposition \ref{prop:two-sided} has been proved with different regularity assumptions in the compact  setting and in the non-compact case in  \cite[Theorem 7.1]{da}.

\subsection{Abramov and Kac}
The entropy of a flow with respect to an invariant measure can be defined as  the entropy of the corresponding time one map. The following result was proved  by Abramov \cite{a}.

\begin{proposition}[Abramov] \label{prop:as}
Let $\nu \in \M_{\Phi}$ be such that  $\nu=(\mu \times m)|_{Y} /(\mu \times m)(Y)$, where $\mu \in \M_{\sigma}$ then the entropy of $\nu$ with respect to the flow, that we denote $h_\nu(\Phi)$, satisfies
\begin{equation}
h_\nu(\Phi)=\frac{h_\mu(\sigma)}{\int \tau \text{d} \mu}.
\end{equation}
\end{proposition}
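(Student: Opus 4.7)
The plan is to execute the classical Abramov argument: interpret the base shift $\sigma$ as (essentially) the first-return map of the flow to the cross-section $\Sigma^+\times\{0\}\subset Y$ with return-time function $\tau$, and then compute $h_\nu(\Phi)$ directly via a generating partition on $Y$ built by lifting a generating partition on the base. Since both $h_\nu(\Phi)$, $h_\mu(\sigma)$ and $\int\tau\,d\mu$ are affine in the measure, and since the Ambrose--Kakutani correspondence sends ergodic measures to ergodic measures, we may first reduce via ergodic decomposition to the case of ergodic $\mu$. Note that the very fact that $\nu$ is a probability measure forces $\bar\tau:=\int\tau\,d\mu<\infty$.

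Next, choose a countable measurable partition $\xi$ of $\Sigma^+$ with $H_\mu(\xi)<\infty$ that is generating for $\sigma$, and lift it to a partition of $Y$ by cutting each fibre by the unit grid in the $t$-direction:
\begin{equation*}
\hat\xi:=\bigl\{(C\times[j,j+1))\cap Y:C\in\xi,\ j\in\Z_{\ge 0}\bigr\}.
\end{equation*}
One checks $H_\nu(\hat\xi)<\infty$ (this uses $\int\log^+\tau\,d\mu<\infty$, which is a consequence of $\tau\in L^1(\mu)$ via Jensen) and that $\hat\xi$ generates for the time-one map $\varphi_1$. The crux is the following comparison: under $\varphi_1$, the atom of $(x,s)$ in $\bigvee_{k=0}^{T-1}\varphi_{-k}\hat\xi$ is determined, up to a bounded-entropy contribution coming from the $t$-coordinate grid, by the atom of $x$ in $\bigvee_{k=0}^{N(T,x)}\sigma^{-k}\xi$, where
\begin{equation*}
N(T,x):=\#\bigl\{k\ge 0:\tau(x)+\tau(\sigma x)+\cdots+\tau(\sigma^{k-1}x)\le T+s\bigr\}
\end{equation*}
counts the number of times the flow crosses the base in time $T$. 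Birkhoff's theorem applied to $\tau$ gives $N(T,x)/T\to 1/\bar\tau$ for $\mu$-a.e.\ $x$, and a standard $L^1$-dominated-convergence argument upgrades this to convergence of the normalized entropies, yielding
\begin{equation*}
h_\nu(\Phi)=h_\nu(\varphi_1,\hat\xi)=\lim_{T\to\infty}\frac{1}{T}H_\nu\!\Bigl(\bigvee_{k=0}^{T-1}\varphi_{-k}\hat\xi\Bigr)=\frac{1}{\bar\tau}\lim_{N\to\infty}\frac{1}{N}H_\mu\!\Bigl(\bigvee_{k=0}^{N-1}\sigma^{-k}\xi\Bigr)=\frac{h_\mu(\sigma)}{\bar\tau},
\end{equation*}
which is the claimed identity.

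The main obstacle is controlling the fibre direction when $\tau$ is unbounded, as is inevitable in the geometric setting where $\tau$ records cusp excursions and can be arbitrarily large. The unit-grid lift above handles this thanks to integrability of $\tau$, but one has to justify carefully both the finite-entropy claim for $\hat\xi$ and the passage from the pathwise Birkhoff statement to the entropy statement (rather than merely showing $\le$ or $\ge$ in the two inequalities). A convenient alternative that sidesteps these subtleties is to induce on the sublevel set $\{\tau\le R\}$, prove the formula there where the roof is bounded, and then let $R\to\infty$, using monotone convergence of $h_{\mu_R}(\sigma_R)\,\mu(\{\tau\le R\})$ and of the mean return time to recover the formula in the limit.
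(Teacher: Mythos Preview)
The paper does not supply a proof of this proposition at all: it is stated as a classical result and attributed to Abramov via the citation \cite{a}, with no argument given. So there is nothing to compare your proposal against in the paper itself.

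That said, your outline is the standard Abramov argument and is essentially correct in spirit. A couple of small points worth tightening if you want it to stand on its own: (i) the existence of a generating partition $\xi$ with $H_\mu(\xi)<\infty$ presupposes $h_\mu(\sigma)<\infty$; on a countable-alphabet shift this can fail, so you should separately dispose of the case $h_\mu(\sigma)=\infty$ (both sides are then $+\infty$ since $\int\tau\,d\mu<\infty$), and (ii) the passage from the pointwise Birkhoff statement $N(T,x)/T\to 1/\bar\tau$ to convergence of the normalized entropies is exactly the delicate step in Abramov's original proof and deserves more than ``a standard $L^1$-dominated-convergence argument''---one typically uses the Shannon--McMillan--Breiman theorem on both levels, or Abramov's original averaging trick. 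Your suggested alternative of inducing on $\{\tau\le R\}$ is a clean way to avoid these issues.
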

%

In Proposition \ref{prop:as} a relation between  the entropy of a measure for the flow and a corresponding measure for the base dynamics was established. We now prove a relation between the integral of a function on the flow with the integral of a related function on the base.  Let  $f \colon Y\to\R$ be a continuous function. Define
$\Delta_f\colon\Sigma^+\to\R$~by
\[
\Delta_f(x):=\int_{0}^{\tau(x)} f(x,t) \, ~{\rm d}t.
\]

 \begin{proposition}[Kac's Lemma] \label{prop:kac}
Let $f\colon Y\to\R$ be a continuous function and $\nu \in \M_{\Phi}$ an invariant measure  that can be written as
$$\nu=\frac{\mu \times m} {(\mu \times m)(Y)},$$
where $\mu \in\M_{\sigma}$, then
\begin{equation*} \label{eq:rela}
\int_{Y}f \, ~{\rm d} \nu= \frac{\int_\Sigma \Delta_f\, ~{\rm d}
\mu}{\int_\Sigma\tau \, ~{\rm d} \mu}.
\end{equation*}
\end{proposition}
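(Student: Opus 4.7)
The proof is a direct application of Fubini's theorem, exploiting the product structure of the measure $\mu \times m$ restricted to $Y$. Since the statement is essentially a bookkeeping result, I do not expect any serious obstacle; the only thing to check is that the integrability conditions allow the interchange of integrals.

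First I would unpack the definitions. By assumption $\nu = (\mu \times m)|_Y / (\mu \times m)(Y)$, so for any continuous $f \colon Y \to \R$ that is $\nu$-integrable,
\[
\int_Y f \, d\nu \;=\; \frac{1}{(\mu \times m)(Y)} \int_Y f \, d(\mu \times m).
\]
Next, since $Y = \{(x,t) \in \Sigma^+ \times \R : 0 \le t \le \tau(x)\}$, Fubini's theorem (applied with base measure $\mu$ and fiber measure the one-dimensional Lebesgue measure $m$ restricted to $[0,\tau(x)]$) yields
\[
\int_Y f \, d(\mu \times m) \;=\; \int_{\Sigma^+} \left( \int_0^{\tau(x)} f(x,t) \, dt \right) d\mu(x) \;=\; \int_{\Sigma^+} \Delta_f(x) \, d\mu(x).
\]
Applying the same principle to the constant function $1$ gives the normalizing constant
\[
(\mu \times m)(Y) \;=\; \int_{\Sigma^+} \tau(x) \, d\mu(x).
\]
Dividing the two identities yields the stated formula.

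The only point that deserves a brief comment is why Fubini applies: the roof function $\tau$ is bounded away from zero and of summable variation, so $\tau$ is measurable and $\mu$-a.e.\ finite; for $f$ continuous and $\nu$-integrable, the function $(x,t) \mapsto f(x,t) \chi_{[0,\tau(x)]}(t)$ is measurable on $\Sigma^+ \times \R$ and absolutely integrable with respect to $\mu \times m$ by hypothesis, so the interchange of integrals is legitimate. Implicit in the statement is that $(\mu \times m)(Y) < \infty$, which is equivalent to $\tau \in L^1(\mu)$; under this assumption the right-hand side is well-defined and the argument above is complete.
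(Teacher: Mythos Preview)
Your argument is correct and is the standard Fubini computation. The paper itself does not give a proof of this proposition; it records Kac's Lemma as a known fact in the preliminaries and moves on. Your write-up supplies exactly the elementary justification one would expect, including the relevant integrability caveat $\tau\in L^1(\mu)$ that makes the normalization $(\mu\times m)(Y)=\int_\Sigma \tau\,d\mu$ finite.
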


Propositions \ref{prop:as} and \ref{prop:kac} together with the relation between the spaces of invariant measures for the flow and for the shift established by Ambrose and Kakutani (see subsection \ref{ss:measure}) allow us to study thermodynamic formalism for the flow by means of the corresponding one on the base.

\subsection{Thermodynamic formalism for suspension flows} \label{s:tf}
Let $(\Sigma^+, \sigma)$ be a topologically mixing countable Markov shift and $\tau:\Sigma^+ \to \R$ a positive function bounded away from zero of summable variations.  Denote by $(Y, \Phi)$ the suspension semi-flow over $(\Sigma^+, \sigma)$ with roof function $\tau$.  Thermodynamic formalism has been studied in this context by several people with different degrees of generality: Savchenko \cite{sav}, Barreira and Iommi \cite{bi}, Kempton \cite{ke} and Jaerisch, Kesseb\"ohmer and  Lamei \cite{jkl}. Thermodynamic formalism for suspension flows where the base $(\Sigma^+, \sigma)$ is a sub-shift of finite type defined on a finite alphabet has been studied, for example, in \cite{br, papo}.
The next result provides equivalent definitions for the pressure, $P_{\Phi}( \cdot)$, on the flow.

\begin{theorem} \label{thm: flow pres}
Let $f:Y \to \R$ be a function such that $\Delta_f:\Sigma^+ \to \R$ is of summable variations. Then the following equalities hold
\begin{eqnarray*}
P_{\Phi}(f)&:=&\lim_{t \to \infty} \frac{1}{t} \log \left(\sum_{\phi_s(x,0)=(x,0), 0<s \leq t} \exp\left( \int_0^s f(\phi_k(x,0)) ~{\rm d}k \right) \chi_{C_{i_0}}(x) \right) \\
&=& \inf\{t \in \R : P_\sigma (\Delta_f - t \tau) \leq 0\} =\sup \{t \in \R : P_\sigma (\Delta_f - t \tau) \geq 0\} \\
&=& \sup \{ P_{\Phi|K}(f) : K\in \cK(\Phi) \},
\end{eqnarray*}
where $\cK(\Phi)$ denotes the space of compact $\Phi-$invariant sets.
\end{theorem}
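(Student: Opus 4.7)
The plan is to reduce every statement to the base shift $(\Sigma^+,\sigma)$ via the Abramov--Kakutani correspondence recalled in subsection~\ref{ss:measure}, together with Propositions~\ref{prop:as} and~\ref{prop:kac}. The key algebraic identity is that for any $\nu \in \M_{\Phi}$ written as $\nu=(\mu\times m)|_Y/(\mu\times m)(Y)$ with $(\mu\times m)(Y)<\infty$, Kac's lemma gives $(\mu\times m)(Y)=\int\tau\,d\mu$, hence
$$\int\tau\,d\mu\;\Bigl(h_\nu(\Phi)+\int f\,d\nu - t\Bigr)\;=\;h_\mu(\sigma)+\int(\Delta_f-t\tau)\,d\mu.$$
Consequently $h_\nu(\Phi)+\int f\,d\nu\ge t$ if and only if $h_\mu(\sigma)+\int(\Delta_f-t\tau)\,d\mu\ge 0$. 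Taking the supremum over $\mu\in\M_\sigma$ with $-\int(\Delta_f-t\tau)\,d\mu<\infty$ and invoking the variational principle for $P_\sigma$, one obtains $\sup_{\nu}\bigl(h_\nu(\Phi)+\int f\,d\nu\bigr)\ge t\;\Leftrightarrow\;P_\sigma(\Delta_f-t\tau)\ge 0$. Since $\tau$ is bounded away from zero, $t\mapsto P_\sigma(\Delta_f-t\tau)$ is strictly decreasing and continuous on the locus where it is finite, so the infimum and supremum characterisations of the theorem coincide and equal the variational supremum on the left.

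For the periodic-orbit formula, I would unwind the sum directly. Every closed $\Phi$-orbit through $(x,0)$ of period $s$ comes from a periodic point $\sigma^n x=x$ with $s=S_n\tau(x):=\sum_{i=0}^{n-1}\tau(\sigma^i x)$ and $\int_0^s f(\phi_k(x,0))\,dk=S_n\Delta_f(x)$. The truncated sum therefore rewrites as
$$\sum_{n\ge 1}\,\sum_{\substack{\sigma^n x=x\\ S_n\tau(x)\le T}} e^{S_n\Delta_f(x)}\chi_{C_{i_0}}(x).$$
A standard Laplace/Cramer argument of Savchenko~\cite{sav} (using topological mixing of $\sigma$ and the definition of $P_\sigma$ as a Gurevich partition function) shows that the exponential growth rate in $T$ of this expression equals the unique $t^*\in\R$ with $P_\sigma(\Delta_f-t^*\tau)=0$, and that the limit exists and is independent of $i_0$. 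Combined with the previous paragraph this identifies $P_\Phi(f)$ with both the $\inf$ and the $\sup$ expressions.

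Finally, for the compact-invariant-set characterisation, I would combine the approximation property of the Gurevich pressure recalled earlier with two observations: (i) every compact $\sigma$-invariant $K\subset\Sigma^+$ yields a compact $\Phi$-invariant $Y_K\subset Y$, because summable variation of $\tau$ implies continuity and hence boundedness of $\tau$ on $K$; (ii) on the compact set $Y_K$ the implicit characterisation $P_{\Phi|Y_K}(f)=\inf\{t:P_K(\Delta_f-t\tau)\le 0\}$ is classical, going back to Bowen--Ruelle \cite{br}. Applying $P_\sigma(\Delta_f-t\tau)=\sup_{K\in\cK}P_K(\Delta_f-t\tau)$ and letting $K$ range over $\cK$ then gives $P_\Phi(f)=\sup_{K\in\cK(\Phi)}P_{\Phi|Y_K}(f)$. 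The step I expect to be the main obstacle is the periodic-orbit identification, where one must interchange the limit in $T$ with the sum over $n$ in a non-compact alphabet; this is exactly the Tauberian content of Savchenko's theorem, whose hypotheses (summable variation of $\Delta_f-t\tau$ and topological mixing of $\sigma$) are in force here.
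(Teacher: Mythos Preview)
The paper does not actually give a proof of this theorem: it is stated in subsection~\ref{s:tf} as a known result, attributed to Savchenko~\cite{sav}, Barreira--Iommi~\cite{bi}, Kempton~\cite{ke}, and Jaerisch--Kesseb\"ohmer--Lamei~\cite{jkl}, and no argument is supplied beyond those citations. So there is no ``paper's own proof'' to compare against; the theorem plays the role of a black box imported from the literature.

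Your sketch is a faithful outline of how those cited proofs actually run: the Abramov--Kac reduction to the base, the identification of closed $\Phi$-orbits with $\sigma$-periodic points and the Tauberian/Laplace step \`a la Savchenko for the growth rate, and the compact approximation via Gurevich pressure combined with the classical Bowen--Ruelle implicit characterisation on each compact piece. One small point worth tightening if you ever write this out in full: in the variational step you pass silently between $\M_\Phi$ and $\M_\sigma$, but measures $\mu\in\M_\sigma$ with $\int\tau\,d\mu=\infty$ do not correspond to probability measures for the flow, and conversely the integrability side-condition in the variational principle for $P_\sigma$ must be checked; this is exactly where the hypothesis that $\tau$ is bounded away from zero is used (and why the references are careful about it).
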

In particular, the topological entropy of the flow is the unique number $h_{top}(\Phi)$ satisfying
\begin{equation} \label{eq:h}
h_{top}(\Phi) = \inf\{t \in \R : P (- t \tau) \leq 0\}.
 \end{equation}
Note that in this setting the Variational Principle also holds (see \cite{bi, jkl, ke,sav}).

\begin{theorem}[Variational Principle] Let $f:Y \to \R$ be a function such that $\Delta_f:\Sigma^+ \to \R$ is of summable variations. Then
\begin{equation*}
P_\Phi(f)=\sup \left\{ h_{\nu}(\Phi) +\int_Y f ~{\rm d} \nu : \nu\in
\mathcal{M}_{\Phi} \text{ and } -\int_Y f \, ~{\rm d}\nu <\infty \right\}.
\end{equation*}
\end{theorem}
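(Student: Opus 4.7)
The plan is to bootstrap the variational principle from the base shift (available via Sarig's work, Section~2.1) to the suspension semi-flow, using Abramov's formula for entropy and Kac's formula for integrals, together with the two equivalent characterizations of $P_\Phi(f)$ already established in Theorem~\ref{thm: flow pres}.

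\emph{Upper bound.} Fix $\nu \in \M_\Phi$ with $-\int_Y f\, d\nu < \infty$. By the Ambrose--Kakutani parametrization recalled in Section~\ref{ss:measure}, write $\nu = (\mu \times m)|_Y / (\mu \times m)(Y)$ with $\mu \in \M_\sigma$, and put $c := \int \tau \, d\mu$; since $\nu$ is a probability measure and $\tau$ is bounded away from zero, $c \in [\inf \tau, \infty)$. Setting $t := h_\nu(\Phi) + \int_Y f\, d\nu$, Propositions~\ref{prop:as} and~\ref{prop:kac} give
\begin{equation*}
tc \;=\; h_\mu(\sigma) + \int \Delta_f \, d\mu,
\end{equation*}
hence $h_\mu(\sigma) + \int (\Delta_f - t\tau)\, d\mu = 0$. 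The hypothesis $-\int_Y f \, d\nu < \infty$ is equivalent to $-\int \Delta_f \, d\mu < \infty$, and combined with $\tau$ bounded away from zero this yields $-\int (\Delta_f - t\tau)\, d\mu < \infty$, so the variational principle for $(\Sigma^+,\sigma)$ applies and gives $P_\sigma(\Delta_f - t\tau) \geq 0$. The supremum characterization in Theorem~\ref{thm: flow pres} then forces $t \leq P_\Phi(f)$.

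\emph{Lower bound.} Invoke the third identity in Theorem~\ref{thm: flow pres}, $P_\Phi(f) = \sup\{P_{\Phi|K}(f) : K \in \cK(\Phi)\}$. On each compact $\Phi$-invariant set $K$ the restricted flow is continuous on a compact space, so the classical variational principle (see \cite[Chapter~9]{wa}) gives
\begin{equation*}
P_{\Phi|K}(f) \;=\; \sup\left\{ h_\nu(\Phi) + \int_Y f\, d\nu : \nu \in \M_\Phi,\ \supp \nu \subset K \right\}.
\end{equation*}
Any such $\nu$ has $f$ bounded on $\supp \nu$, so $-\int f\, d\nu < \infty$ is automatic; thus every measure appearing on the right belongs to the collection over which the flow variational principle takes its supremum. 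Taking the supremum over $K$ establishes the reverse inequality.

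\emph{Main obstacle.} The naive route for the lower bound is to feed the shift variational principle the potential $\Delta_f - (P_\Phi(f) - \eps)\tau$ and push the resulting $\mu \in \M_\sigma$ back to a flow measure via Ambrose--Kakutani. The hitch is that this transfer requires $\int \tau \, d\mu < \infty$, which need not hold outside the compact setting. Funnelling the argument through the compact invariant sets $K \in \cK(\Phi)$ bypasses this: $\tau$ is bounded on the projection of any such $K$ to the base, so every invariant probability supported on $K$ produces a legitimate flow measure, and the approximation formula in Theorem~\ref{thm: flow pres} guarantees that no pressure is lost in the process.
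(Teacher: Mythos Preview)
The paper does not give its own proof of this statement; it is recorded with citations to \cite{bi, jkl, ke, sav} and left at that. Your argument is correct and is essentially the standard route taken in those references: for the upper bound, push a flow measure down to the base via Abramov and Kac, observe that the resulting $\mu$ witnesses $P_\sigma(\Delta_f - t\tau) \ge 0$ for $t = h_\nu(\Phi) + \int f\, d\nu$, and invoke the $\sup$-characterization from Theorem~\ref{thm: flow pres}; for the lower bound, exhaust through compact $\Phi$-invariant sets and quote the classical variational principle on each. Your closing remark identifies exactly the point that makes the non-compact case non-trivial --- a near-optimal $\mu \in \M_\sigma$ for $\Delta_f - (P_\Phi(f)-\eps)\tau$ need not satisfy $\int \tau\, d\mu < \infty$ and so need not lift to a flow probability --- and this is precisely why the compact-approximation identity in Theorem~\ref{thm: flow pres} is invoked in \cite{bi}.
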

A measure $\nu \in \mathcal{M}_{\Phi}$ is called an \emph{equilibrium measure} for $f$ if
\begin{equation*}
P_\Phi(f)= h_\nu(\Phi) + \int f ~{\rm d} \nu.
\end{equation*}
It was proved in \cite[Theorem 3.5]{ijt} that potentials $f$ for which $\Delta_f$ is locally H\"older have at most one equilibrium measure. Moreover,
the following result (see \cite[Theorem 4]{bi}) characterises functions having equilibrium measures.

\begin{theorem}\label{thm_es}
Let $f \colon Y \to \R$ be a continuous function such
that $\Delta_f$ is of summable variations. Then
there is an equilibrium measure $\nu_f\in \M_\Phi$ for $f$ if  and only if
we have that $P_\sigma(\Delta_f -P_{\Phi}(f) \tau)=0$ and there exists
 an equilibrium measure $\mu_f\in \M_\sigma$ for $\Delta_f -P_{\Phi}(f) \tau$ such that $\int \tau  d\mu_f < \infty$.
\end{theorem}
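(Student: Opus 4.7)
The plan is to translate the flow-level equilibrium condition into a base-level one via the Ambrose–Kakutani correspondence (Subsection~\ref{ss:measure}) combined with Propositions~\ref{prop:as} (Abramov) and~\ref{prop:kac} (Kac). The key identity is that for any $\nu \in \M_\Phi$ coming from $\mu \in \M_\sigma$ with $\int \tau\, d\mu < \infty$, one has
$$h_\nu(\Phi) + \int_Y f\, d\nu - P_\Phi(f) \;=\; \frac{h_\mu(\sigma) + \int (\Delta_f - P_\Phi(f)\tau)\, d\mu}{\int \tau\, d\mu}.$$
So $\nu$ realizes the variational supremum for $f$ at level $P_\Phi(f)$ if and only if $\mu$ realizes the variational supremum for $\Delta_f - P_\Phi(f)\tau$ at level zero. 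Both directions of the theorem will then reduce to reading this identity appropriately, together with the variational principle on $\Sigma^+$.

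For the forward implication, suppose $\nu_f \in \M_\Phi$ is an equilibrium measure for $f$. By Ambrose–Kakutani, write $\nu_f = (\mu_f \times m)|_Y / (\mu_f \times m)(Y)$, where $\int \tau\, d\mu_f < \infty$ because $\nu_f$ is a probability measure. The identity above yields $h_{\mu_f}(\sigma) + \int(\Delta_f - P_\Phi(f)\tau)\, d\mu_f = 0$, and hence by the variational principle for $\sigma$ we get $P_\sigma(\Delta_f - P_\Phi(f)\tau) \geq 0$. For the reverse inequality, recall from Theorem~\ref{thm: flow pres} that $P_\Phi(f) = \inf\{t \in \R : P_\sigma(\Delta_f - t\tau) \leq 0\}$. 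The map $t \mapsto P_\sigma(\Delta_f - t\tau)$ is convex (an affine family of potentials), and therefore continuous on the interior of its domain of finiteness, so letting $t \searrow P_\Phi(f)$ gives $P_\sigma(\Delta_f - P_\Phi(f)\tau) \leq 0$. Equality then holds, and the identity shows $\mu_f$ is an equilibrium measure for $\Delta_f - P_\Phi(f)\tau$.

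For the backward implication, assume $P_\sigma(\Delta_f - P_\Phi(f)\tau) = 0$ with equilibrium measure $\mu_f \in \M_\sigma$ such that $\int \tau\, d\mu_f < \infty$. Set $\nu_f := (\mu_f \times m)|_Y / \int \tau\, d\mu_f$, which is a well-defined probability measure in $\M_\Phi$. Plugging into the key identity,
$$h_{\nu_f}(\Phi) + \int_Y f\, d\nu_f - P_\Phi(f) = \frac{h_{\mu_f}(\sigma) + \int (\Delta_f - P_\Phi(f)\tau)\, d\mu_f}{\int \tau\, d\mu_f} = \frac{P_\sigma(\Delta_f - P_\Phi(f)\tau)}{\int \tau\, d\mu_f} = 0,$$
so $\nu_f$ attains the variational supremum for $f$ and is therefore an equilibrium measure.

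The main technical obstacle is justifying the step $P_\sigma(\Delta_f - P_\Phi(f)\tau) \leq 0$ in the forward direction, which requires continuity of $t \mapsto P_\sigma(\Delta_f - t\tau)$ at $t = P_\Phi(f)$. Convexity supplies continuity on the interior of the domain of finiteness, and the very existence of an equilibrium measure forces $P_\sigma(\Delta_f - P_\Phi(f)\tau)$ to be finite (in fact non-negative, by the calculation above), placing us safely in that interior and closing the argument.
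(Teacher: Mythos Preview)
The paper does not give its own proof of this theorem; it is quoted from \cite[Theorem~4]{bi}. Your argument via the Abramov--Kac identity is exactly the natural one and is the route taken in the literature, and the backward implication is clean.

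There is one soft spot in the forward direction. You want $P_\sigma(\Delta_f - P_\Phi(f)\tau) \leq 0$ and argue by convexity and continuity, writing that ``the very existence of an equilibrium measure forces $P_\sigma(\Delta_f - P_\Phi(f)\tau)$ to be finite (in fact non-negative \dots), placing us safely in that interior.'' But ``non-negative'' does not imply ``finite'' --- the variational lower bound you derived from $\mu_f$ is compatible with the pressure being $+\infty$ --- and even if finite, $P_\Phi(f)$ may sit at the left endpoint of the domain of finiteness of $t \mapsto P_\sigma(\Delta_f - t\tau)$, where convexity alone does not guarantee right-continuity (a convex function on $[a,\infty)$ can satisfy $g(a) > \lim_{t\to a^+} g(t)$).

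The gap is easy to close using the approximation property of Gurevich pressure already recorded in Section~\ref{s:pre}. Since $\tau$ is bounded away from zero, $t \mapsto P_\sigma(\Delta_f - t\tau)$ is non-increasing, and the characterization $P_\Phi(f) = \inf\{t : P_\sigma(\Delta_f - t\tau) \leq 0\}$ from Theorem~\ref{thm: flow pres} forces $P_\sigma(\Delta_f - t\tau) \leq 0$ for every $t > P_\Phi(f)$. Now if $P_\sigma(\Delta_f - P_\Phi(f)\tau) > 0$, pick a compact invariant $K \subset \Sigma^+$ with $P_K(\Delta_f - P_\Phi(f)\tau) > 0$; on $K$ the map $t \mapsto P_K(\Delta_f - t\tau)$ is continuous (classical pressure on a subshift of finite type), so $P_K(\Delta_f - t_1\tau) > 0$ for some $t_1 > P_\Phi(f)$, whence $P_\sigma(\Delta_f - t_1\tau) \geq P_K(\Delta_f - t_1\tau) > 0$, a contradiction. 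This yields $P_\sigma(\Delta_f - P_\Phi(f)\tau) \leq 0$ without any appeal to interior continuity, and the rest of your argument goes through unchanged.
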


\begin{remark}
We stress that the situation is more complicated when $\tau$ is not assumed to be bounded away from zero. For results in that setting see \cite{ijt}.
\end{remark}

\section{Entropy and escape of mass}
Over the last few years there has been interest, partially motivated for its connections with number theory, in studying  the relation between entropy and the escape of mass of sequences of invariant measures for diagonal flows on homogenous spaces (see \cite{ekp, elmv, klkm}). Some remarkable results have been obtained bounding the amount of mass that an invariant measure can give to an unbounded part of the domain (a cusp)  in terms of the entropy of the measure (see for example \cite[Theorem A]{ekp} or  \cite[Theorem 1.3]{klkm}). The purpose of this section is to prove similar results in the context of suspension flows defined over countable Markov shifts.   As we will see, the proofs in this setting  suggest a geometrical  interpretation that we pursue in section \ref{e:m}.

Let $(\Sigma^+, \sigma)$ be a topologically mixing countable Markov shift of infinite topological entropy and $\tau: \Sigma^+ \to \R^+$ a potential of summable variations bounded away from zero.  Denote by $(Y, \Phi)$ the associated suspension flow, which we assume to have finite topological entropy. Note that since $(\Sigma^+, \sigma)$ has infinite entropy and $\tau$ is non-negative, the entropy $h_{top}(\Phi)$ of the flow satisfies  $P_\sigma(-h(\Phi) \tau) \leq 0$ (see equation \eqref{eq:h}). Therefore, there exists  a real number $s_{\infty} \in (0, h_{top}(\Phi)]$  such that
\begin{equation*}
P_\sigma(-t\tau)=
\begin{cases}
\textrm{infinite} & \text{ if } t < s_{\infty}; \\

\textrm{finite} & \text{ if } t >s_{\infty}.\end{cases}
\end{equation*}
As it turns out the number $s_{\infty}$ will play a crucial role in our work.

For geodesic flows defined in non-compact manifolds there are vectors that escape through the cusps, they do not exhibit any recurrence property. That phenomena is impossible in the symbolic setting, every point will return to the base after some time.
The following definition describes the set of points that escape on average (compare with an analogous definition given in  \cite{klkm}).

\begin{definition}
We say that a point $(x,t) \in Y$ \emph{escapes on average} if
\begin{equation*}
\lim_{n \to \infty} \frac{1}{n} \sum_{i=0}^{n-1} \tau(\sigma^i x)= \infty.
\end{equation*}
We denote the set of all points which escape on average by $\E_{\A}(\tau)$.
\end{definition}

\begin{remark}
Note that if $\nu \in \M_{\Phi}$ is ergodic and $\nu= (\mu \times m) / (\mu \times m)(Y)$ with $\mu \in \M_{\sigma}$
then Birkhoff's theorem implies that
\begin{equation*}
\lim_{n \to \infty} \frac{1}{n} \sum_{i=0}^{n-1} \tau(\sigma^i x)= \int \tau d \mu.
\end{equation*}
Thus, no measure in $\M_{\Phi}$ is supported on  $\E_{\A}(\tau)$. We can, however, describe the dynamics of the set  $\E_{\A}(\tau)$
studying sequences of measures $\nu_n \in \M_{\Phi}$ such that the associated measures $\mu_n \in \M_{\sigma}$ satisfy
\[\lim_{n \to \infty} \int \tau d \mu_n =\infty.\]
\end{remark}

In our first result we show that a measure of sufficiently large entropy can not give too much weight to the set of points for  which the return time to the base is very high. More precisely,

\begin{theorem} \label{thm:ent_cusp}
Let $c \in (s_{\infty} , h_{top}(\Phi))$.  There exists a constant $C>0$ such that for every  $\nu \in \M_{\Phi}$ with $h_{\nu}(\Phi) \geq c$, we have that
\[\int \tau d \mu \leq  C.\]
\end{theorem}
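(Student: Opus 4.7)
The proof is a direct interplay between Abramov's formula and the variational principle, using the defining property of $s_\infty$ that $P_\sigma(-t\tau)$ becomes finite for $t > s_\infty$.

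My plan is as follows. Fix any auxiliary parameter $t$ with $s_\infty < t < c$; since $t > s_\infty$, we know by the definition of $s_\infty$ that $P_\sigma(-t\tau) < \infty$. I will show that the constant
\[
C := \frac{P_\sigma(-t\tau)}{c - t}
\]
does the job, which is possible because $c - t > 0$.

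Given $\nu \in \mathcal{M}_\Phi$ with $h_\nu(\Phi) \geq c$, write $\nu = (\mu\times m)|_Y/(\mu\times m)(Y)$ with $\mu \in \mathcal{M}_\sigma$ as in Section \ref{ss:measure}. Since $\nu$ is a probability measure, $(\mu\times m)(Y) = \int \tau\, d\mu$ is finite, so $\mu$ is an admissible test measure in the variational principle for $-t\tau$ (the integrability condition $-\int(-t\tau)\,d\mu = t\int\tau\,d\mu < \infty$ holds). Abramov's formula (Proposition \ref{prop:as}) gives
\[
h_\mu(\sigma) = h_\nu(\Phi)\int \tau\, d\mu \geq c \int \tau\, d\mu.
\]
Plugging $\mu$ into the variational principle for the potential $-t\tau$ on the base shift, I obtain
\[
P_\sigma(-t\tau) \;\geq\; h_\mu(\sigma) - t\int\tau\, d\mu \;\geq\; (c-t)\int\tau\, d\mu,
\]
and rearranging yields $\int \tau\, d\mu \leq C$, as desired.

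The only conceptual point to be careful about is that the variational principle is being applied to a measure $\mu$ that is only known a priori to satisfy the integrability condition through the finiteness of $\nu$; all the rest is a two-line computation. I do not anticipate a real obstacle, since the content of the statement is essentially that Abramov's identity forces $\int \tau\, d\mu$ to be controlled once $h_\mu(\sigma)/\int\tau\, d\mu$ exceeds the threshold $s_\infty$ past which linear upper bounds on $h_\mu(\sigma)$ in terms of $\int\tau\, d\mu$ become available via the finite pressure $P_\sigma(-t\tau)$. In particular the constant $C$ can be optimized by taking $t \to s_\infty^+$ if one wishes a sharper bound, but any fixed $t \in (s_\infty, c)$ suffices for the qualitative statement.
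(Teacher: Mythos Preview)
Your proof is correct and essentially identical to the paper's: fix $s\in(s_\infty,c)$, combine Abramov's formula with the variational principle for $-s\tau$ to obtain $\int\tau\,d\mu \le P_\sigma(-s\tau)/(c-s)$. One small caveat on your closing parenthetical: optimizing $C$ is not achieved by sending $t\to s_\infty^+$ (where $P_\sigma(-t\tau)$ may well diverge), but rather by minimizing $P_\sigma(-t\tau)/(c-t)$ over $t\in(s_\infty,c)$, as the paper itself remarks.
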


\begin{proof}
Let $\nu \in \M_{\Phi}$ with  $h_{\nu}(\Phi)=c$ and let $\mu \in \M_{\sigma}$ be the invariant measure satisfying $\nu=(\mu \times m) / ((\mu \times m)(Y))$. By the Abramov  formula we have
\begin{equation*}
h_\mu(\sigma)-c \int \tau d \mu=0.
\end{equation*}
We will consider the straight line $L(t):=h_\mu(\sigma)-t \int \tau d \mu$. Note that $L(c)=0$ and $L(0)=h_\mu(\sigma)$.
Let $s \in (s_{\infty} , c)$. Note that $P_\sigma(-s\tau)< \infty$ and by the variational principle $L(s) \leq P_\sigma(-s\tau)$.
This remark readily implies a bound on the slope of $L(t)$. Indeed,
\begin{equation*}
\int \tau d \mu \leq \frac{P_\sigma(-s \tau)}{c-s}.
\end{equation*}
Thus the constant $C=P_\sigma(-s \tau) / (c-s)$ satisfies the theorem. In order to obtain the best possible constant we have to compute the infimum of the function defined for $s \in  (s_{\infty}, c)$ by
\[s \mapsto \frac{P_\sigma(-s \tau)}{c-s}.\]
\end{proof}

\begin{remark}
We stress that the constant $C$ in Theorem \ref{thm:ent_cusp} depends only on the entropy bound $c$ and not on the measure $\nu$.
\end{remark}

\begin{remark}
An implicit assumption in Theorem \ref{thm:ent_cusp} is that $s_{\infty} < h_{top}(\Phi)$. In Section \ref{sec:sch} we will see that in the geometrical context of geodesic flows this assumption has a very natural  interpretation. Indeed, it will be shown to be equivalent to the \emph{parabolic gap property} (see \cite[Section III]{dalpei} or Definition \ref{def:pgc} for precise definitions).
\end{remark}

\begin{corollary} \label{cor_c}
If $(\Sigma^+, \sigma)$ is a Markov shift defined  countable alphabet satisfying the BIP condition then the best possible constant $C \in \R$  in  Theorem \ref{thm:ent_cusp} is given by
\[C= \frac{P_\sigma(-s_m \tau)}{c-s_m},\]
where $s_m \in \R$ is such that the equilibrium measure $\mu_{s_m}$ for $-s_m \tau$ satisfies
\[ c=  \frac{h_{\mu_{s_m}}(\sigma)}{\int \tau d \mu_{s_m}}.\]
\end{corollary}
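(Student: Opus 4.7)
Under the BIP hypothesis, Theorem \ref{bip} guarantees that $F(s):=P_\sigma(-s\tau)$ is real analytic on $(s_\infty,\infty)$ and that for every $s>s_\infty$ there is a unique equilibrium measure $\mu_s$ for $-s\tau$. The proof of Theorem \ref{thm:ent_cusp} actually yields $\int\tau\,d\mu\le g(s)$ for every $s\in(s_\infty,c)$ and every admissible $\nu\in\M_\Phi$ associated to $\mu\in\M_\sigma$, where $g(s):=F(s)/(c-s)$. Hence the optimal constant equals $\inf_{s\in(s_\infty,c)}g(s)$, and the plan is to identify this infimum with the value stated in the corollary by solving a one-variable optimization problem.

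I would first differentiate $g$ using the standard identity $F'(s)=-\int\tau\,d\mu_s$, which follows from real analyticity of the pressure together with uniqueness of the equilibrium measure in the BIP setting. Combining this with the variational principle equality $F(s)=h_{\mu_s}(\sigma)-s\int\tau\,d\mu_s$, a direct calculation gives
\[
g'(s)\;=\;\frac{F(s)+(c-s)F'(s)}{(c-s)^2}\;=\;\frac{h_{\mu_s}(\sigma)-c\int\tau\,d\mu_s}{(c-s)^2}.
\]
Therefore $g'(s)=0$ precisely when $c=h_{\mu_s}(\sigma)/\int\tau\,d\mu_s$, which is exactly the condition defining $s_m$.

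Next I would establish that the infimum is attained at an interior $s_m\in(s_\infty,c)$. Since $c<h_{top}(\Phi)$, equation \eqref{eq:h} gives $F(c)>0$, so $g(s)\to+\infty$ as $s\to c^-$. At the other endpoint, the expected boundary behavior of pressure for BIP shifts—equivalently, the fact that the flow-entropy ratio $h_{\mu_s}(\sigma)/\int\tau\,d\mu_s$ tends to $s_\infty<c$ as $s\to s_\infty^+$—forces $g(s)\to+\infty$ as $s\to s_\infty^+$ as well. Continuity of $g$ then yields a minimizer at an interior critical point, which by the previous step is an $s_m$ as described; strict convexity of $F$ on $(s_\infty,\infty)$ makes $s_m$ unique. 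Sharpness is automatic: substituting $\mu_{s_m}$ back into the chain of inequalities gives equality, since $F(s_m)=h_{\mu_{s_m}}(\sigma)-s_m\int\tau\,d\mu_{s_m}$ together with $h_{\mu_{s_m}}(\sigma)=c\int\tau\,d\mu_{s_m}$ yields $\int\tau\,d\mu_{s_m}=F(s_m)/(c-s_m)$, and the corresponding flow measure has entropy exactly $c$.

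The main obstacle is justifying the boundary behavior of $g$ at $s_\infty$: the blow-up of $g$ as $s\to s_\infty^+$ is not automatic from Theorem \ref{bip} alone and requires appealing to the identification of $s_\infty$ with the entropy at infinity of the suspension flow (Corollary \ref{cor:ent_inf}), which ensures that the limit of $h_{\mu_s}(\sigma)/\int\tau\,d\mu_s$ as $s\to s_\infty^+$ is strictly less than $c$. Once this boundary control is in place, the calculus described above is routine.
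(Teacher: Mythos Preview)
Your approach coincides with the paper's: both optimize $g(s)=P_\sigma(-s\tau)/(c-s)$ over $(s_\infty,c)$, differentiate using $F'(s)=-\int\tau\,d\mu_s$, and reduce the critical-point equation $(c-s)F'(s)+F(s)=0$ to the condition $c=h_{\mu_s}(\sigma)/\int\tau\,d\mu_s$. The paper in fact stops there---it records only the critical-point characterization and does not verify existence of an interior minimizer, uniqueness, or sharpness---so your additional paragraphs go beyond what the paper proves.

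That said, one of those additions contains a slip. From the hypothesis that $h_{\mu_s}(\sigma)/\int\tau\,d\mu_s\to s_\infty<c$ as $s\to s_\infty^+$ you conclude $g(s)\to+\infty$. This does not follow: that limit only tells you $g'(s)<0$ near $s_\infty^+$ (the numerator $h_{\mu_s}(\sigma)-c\int\tau\,d\mu_s$ is negative there), i.e.\ $g$ is decreasing near the left endpoint, which is entirely compatible with $g$ having a finite right-limit at $s_\infty$. Whether $g(s)\to+\infty$ depends on whether $P_\sigma(-s\tau)\to+\infty$ as $s\to s_\infty^+$, and that is a separate question not settled by Corollary~\ref{cor:ent_inf}. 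The good news is that you do not need blow-up: $g'(s)<0$ near $s_\infty^+$ together with $g(s)\to+\infty$ as $s\to c^-$ already forces an interior minimum. So the argument can be repaired, but the reasoning as written should be corrected. Your sharpness observation (that $\mu_{s_m}$ itself realizes the bound with flow-entropy exactly $c$) is correct and is a genuine addition to the paper's treatment.
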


\begin{proof}
Since the system satisfies  BIP condition the function $P_\sigma(-s \tau)$, when finite,  is differentiable (see Theorem \ref{bip}).  Moreover, its derivative is given by (see \cite[Theorem 6.5]{sa3}),
\[ \frac{d}{ds} P_\sigma(-s \tau) \Big|_{s=s_m} = -\int \tau d \mu_{s_m},\]
where $\mu_{s_m}$ is the (unique) equilibrium measure for $-s_m \tau$. The critical points of the function $s \mapsto \frac{P_\sigma(-s \tau)}{c-s}$ are those which satisfy
\begin{equation} \label{eq:crit}
(c-s)P'_\sigma(-s \tau) +P_\sigma(-s\tau)=0.
\end{equation}
Equivalently,
\begin{equation*}
-(c-s)\int \tau d\mu_s +h_{\mu_s}(\sigma) - s \int \tau d \mu_s =0.
\end{equation*}
Therefore, equation \eqref{eq:crit} is equivalent to
\begin{equation*}
c= \frac{h_{\mu_s}(\sigma)}{\int \tau d \mu_s}.
\end{equation*}
\end{proof}

In the next Theorem we prove that the entropy of the flow on  $\E_{\A}(\tau)$ is bounded above by $s_{\infty}$ and that, under some additional assumptions, it is actually equal to it. This result could be thought of as a symbolic estimation for the entropy of a flow in a cusp. Theorem \ref{thm:ec} is a refined version of a result first observed in \cite[Lemma 2.5]{fjlr} and used in the context of suspension flows in \cite{ij}.

\begin{theorem} \label{thm:ec}
Let $(\nu_n)_n \in \M_{\Phi}$ be an sequence of invariant probability measure for the flow of the form
\begin{equation*}
\nu_n= \frac{\mu_n \times m}{(\mu_n \times m)(Y)},
\end{equation*}
where $\mu_n \in \M_{\sigma}$. If  $\lim_{n \to \infty} \int \tau d \mu_n=\infty$ then
\[ \limsup _{n \to \infty} h_{\nu_n}(\Phi) \leq s_{\infty}.\]
Moreover, if $s_{\infty}< h_{top}(\Phi)$, then  there exists a sequence $(\nu_n)_n \in \M_{\Phi}$ such that $\lim_{n \to \infty} \int \tau d \mu_n=\infty$ and
 \[ \lim_{n \to \infty} h_{\nu_n}(\Phi) = s_{\infty}.\]
\end{theorem}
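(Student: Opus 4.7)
The plan is to translate everything to the base shift via Abramov's formula (Proposition \ref{prop:as}), so that $h_{\nu_n}(\Phi) = h_{\mu_n}(\sigma)/\int \tau\, d\mu_n$, and then sandwich this ratio between $s_\infty$ and itself using the variational principle together with the defining property of $s_\infty$ (that $P_\sigma(-s\tau)$ is infinite for $s<s_\infty$ and finite for $s>s_\infty$).

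For the upper bound, fix an arbitrary $s>s_\infty$, so that $P_\sigma(-s\tau)<\infty$. The variational principle applied to $\mu_n$ gives
\[
h_{\mu_n}(\sigma) - s\!\int\! \tau\, d\mu_n \;\le\; P_\sigma(-s\tau),
\]
so dividing by $\int \tau\, d\mu_n$ and invoking Abramov yields
\[
h_{\nu_n}(\Phi) \;\le\; s + \frac{P_\sigma(-s\tau)}{\int \tau\, d\mu_n}.
\]
By hypothesis $\int \tau\, d\mu_n \to \infty$, so the correction term vanishes; taking $\limsup_n$ and then the infimum over $s>s_\infty$ gives $\limsup_n h_{\nu_n}(\Phi)\le s_\infty$.

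For sharpness, assume $s_\infty<h_{top}(\Phi)$ and pick a sequence $s_n\uparrow s_\infty$ with $s_n>0$ (possible because $s_\infty>0$). Since $P_\sigma(-s_n\tau)=\infty$, the variational principle provides $\mu_n\in \M_\sigma$ with $\int \tau\, d\mu_n<\infty$ and
\[
h_{\mu_n}(\sigma) - s_n\!\int\! \tau\, d\mu_n \;>\; n.
\]
Crucially, Abramov also gives the one-sided bound $h_{\mu_n}(\sigma)\le h_{top}(\Phi)\int \tau\, d\mu_n$; combining these two estimates produces $(h_{top}(\Phi)-s_n)\int \tau\, d\mu_n>n$, and since $h_{top}(\Phi)-s_n$ stays bounded away from $0$, we conclude $\int \tau\, d\mu_n\to \infty$. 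Now the upper bound proved above applies and yields $\limsup_n h_{\nu_n}(\Phi)\le s_\infty$, while dividing the constructive estimate by $\int \tau\, d\mu_n$ gives $h_{\nu_n}(\Phi)>s_n + n/\!\int \tau\, d\mu_n \ge s_n$, hence $\liminf_n h_{\nu_n}(\Phi)\ge s_\infty$. The two inequalities pinch $h_{\nu_n}(\Phi)$ to $s_\infty$.

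I expect the subtle point to be ensuring that the measures $\mu_n$ furnished by the variational principle actually have $\int\tau\, d\mu_n\to\infty$: a priori the divergence of $h_{\mu_n}(\sigma)-s_n\int\tau\, d\mu_n$ could be produced by entropy alone while the integral stays bounded. This is precisely where the finiteness of $h_{top}(\Phi)$ is used, via the Abramov upper estimate, to convert entropy growth into growth of the roof integral. It is also the only place in the proof where the standing assumption $s_\infty<h_{top}(\Phi)$ enters, and it explains why this hypothesis is needed for the sharpness claim.
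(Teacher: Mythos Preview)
Your proof is correct. For the first claim your argument is essentially the paper's, just unpacked: the paper invokes Theorem~\ref{thm:ent_cusp}, whose proof is exactly the variational-principle bound $h_{\mu_n}(\sigma)-s\int\tau\,d\mu_n\le P_\sigma(-s\tau)$ that you write down directly and then divide through by $\int\tau\,d\mu_n$.

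For the sharpness claim, however, your route is genuinely different and more elementary. The paper approximates $\Sigma^+$ by compact invariant subshifts $K_N$, uses that $t\mapsto P_{K_N}(-t\tau)$ is real analytic, applies the mean value theorem across the interval $[s_\infty-1/n,\,s_\infty+1/n]$ to locate a point $t_n$ where the derivative $-P'_{K_N}(-t_n\tau)=\int\tau\,d\mu_n$ (for the equilibrium measure $\mu_n$ of $-t_n\tau$ on $K_N$) exceeds $n^2$, and then argues that $h_{\mu_n}(\sigma)-t_n\int\tau\,d\mu_n=P_{K_N}(-t_n\tau)>0$ to get $h_{\nu_n}(\Phi)>t_n\to s_\infty$. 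You bypass all of this machinery: by applying the variational principle directly to $-s_n\tau$ with $s_n<s_\infty$ (where the pressure is infinite), you extract a measure with $h_{\mu_n}(\sigma)-s_n\int\tau\,d\mu_n>n$, and then the Abramov bound $h_{\mu_n}(\sigma)\le h_{top}(\Phi)\int\tau\,d\mu_n$ forces $\int\tau\,d\mu_n\to\infty$. Your argument thus avoids compact approximation, real analyticity of the pressure, and the existence and derivative formulas for equilibrium measures on compact subshifts; the paper's approach, on the other hand, has the incidental feature of producing measures supported on compact $\sigma$-invariant sets.
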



\begin{proof}
Observe that the first claim is a direct consequence of Theorem \ref{thm:ent_cusp}. Let us construct now  a sequence  $(\nu_n)_n \in \M_{\Phi}$ with  $\lim_{n \to \infty} \int \tau d \mu_n=\infty$ such that
$\lim_{n \to \infty} h_{\nu_n}(\Phi) = s_{\infty}$. First note that it is a consequence of the approximation property of the pressure, that there exists a sequence of compact invariant sets $(K_N)_N \subset \Sigma$ such that $\lim_{N \to \infty} P_{K_N}(-t \tau)=P_\sigma(-t \tau)$.
In particular, for every $n \in \N$ we have that
\begin{equation}  \label{eq:infy}
\lim_{N \to \infty} P_{K_N}\left(-\left(s_{\infty} -1/n\right)\tau\right)= \infty.
\end{equation}
For the same reason, for any $n \in \N$ and $N \in \N$ we have that
\begin{equation}
P_{K_N}\left(-\left(s_{\infty} +1/n\right)\tau\right) \leq P_\sigma\left(-\left(s_{\infty} +1/n\right)\tau\right) <\infty.
\end{equation}
Thus, given $n \in \N$ there exists $N \in \N$ such that
\begin{equation*}
n^2 < \frac{P_{K_N}\left(-\left(s_{\infty} -1/n\right)\tau\right)- P_{K_N}\left(-\left(s_{\infty} +1/n\right)\tau\right)}{2/n}.
\end{equation*}
Since the function $t \mapsto P_{K_N}\left(-t\tau\right)$ is real analytic, by the mean value theorem there exists
$t_n \in [s_{\infty} -1/n, s_{\infty} +1/n]$, such that $ P'_{K_N}\left(-t_n\tau\right)>n^2$. Denote by $\mu_n$ the equilibrium measure for $-t_n\tau$ in $K_N$. We have that
\[n^2 <P'_{K_N}\left(-t_n\tau\right) = \int \tau d \mu_n.\]
In particular the sequence $(\mu_n)_n$ satisfies
\begin{equation*}
\lim_{n \to \infty} \int \tau d \mu_n = \infty.
\end{equation*}
Since $s_{\infty} < h_{top}(\Phi)$ we have that for $n \in \N$ large enough
\begin{equation*}
h_{\mu_n}(\sigma) - t_n \int \tau d \mu_n >0.
\end{equation*}
In particular
\begin{equation*}
t_n < \frac{h_{\mu_n}(\sigma)}{\int \tau d \mu_n}.
\end{equation*}
Since $t_n \in (s_{\infty}- 1/n ,s_{\infty}+ 1/n)$ we have that
\begin{equation}
s_{\infty} = \lim_{n \to \infty} t_n \leq \lim_{n \to \infty}  \frac{h_{\mu_n}(\sigma)}{\int \tau d \mu_n} = \lim_{n \to \infty } h_{\nu_n}(\Phi).
\end{equation}
But we already proved that the limit can not be larger than $s_{\infty}$, thus the result follows.
\end{proof}

\begin{corollary}\label{cor:ent_inf}
If $s_\infty < h_{top}(\Phi)$, then the entropy at infinity $h_\infty(Y,\Phi)$ of the suspension flow satisfies
$$h_\infty(Y,\Phi) = s_\infty.$$
\end{corollary}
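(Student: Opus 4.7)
The plan is to derive Corollary~\ref{cor:ent_inf} directly from Theorem~\ref{thm:ec}, using the Ambrose--Kakutani representation $\nu_n = (\mu_n \times m)/\int\tau\, d\mu_n$ with $\mu_n \in \M_\sigma$ to translate between vague convergence $\nu_n \rightharpoonup 0$ on $Y$ and the divergence $\int \tau\, d\mu_n \to \infty$ of the base integrals. I would split the claim into the two inequalities $h_\infty(Y,\Phi) \ge s_\infty$ and $h_\infty(Y,\Phi) \le s_\infty$.

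For $h_\infty(Y,\Phi) \geq s_\infty$, I would take the sequence of invariant probability measures produced by the ``moreover'' clause of Theorem~\ref{thm:ec} and verify that it converges vaguely to zero in $Y$. This is the easy direction: any compact $K \subset Y$ is contained in some $Y_F := \{(x,t)\in Y : x \in F\}$ for $F \subset \Sigma^+$ compact; $\tau$ is bounded on $F$ by continuity (summable variation), and therefore
\[
\nu_n(K) \leq \nu_n(Y_F) = \frac{\int_F \tau\, d\mu_n}{\int \tau\, d\mu_n} \leq \frac{\sup_F \tau}{\int \tau\, d\mu_n} \longrightarrow 0.
\]

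For $h_\infty(Y,\Phi) \leq s_\infty$, I would take an arbitrary sequence $(\nu_n) \subset \M_\Phi$ with $\nu_n \rightharpoonup 0$. The key step is to show that vague convergence forces $\int \tau\, d\mu_n \to \infty$; granted this, Abramov's formula together with the variational inequality $h_{\mu_n}(\sigma) \leq s\int\tau\,d\mu_n + P_\sigma(-s\tau)$ valid for $s > s_\infty$ yields
\[
h_{\nu_n}(\Phi) = \frac{h_{\mu_n}(\sigma)}{\int \tau\, d\mu_n} \leq s + \frac{P_\sigma(-s\tau)}{\int \tau\, d\mu_n} \longrightarrow s,
\]
and letting $s \searrow s_\infty$ then gives $\limsup h_{\nu_n}(\Phi) \le s_\infty$.

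The hard part will be the implication $\nu_n \rightharpoonup 0 \Rightarrow \int \tau\, d\mu_n \to \infty$. I would argue by contradiction: if $\int \tau\, d\mu_n \le M$ along a subsequence, testing vague convergence against the compact ``base strips'' $F \times [0,T] \subset Y$ (with $F \subset \Sigma^+$ compact and $T < \inf\tau$) shows $\mu_n(F) \to 0$ for every compact $F$, so $\mu_n$ converges vaguely to $0$ on $\Sigma^+$. Combined with Markov's inequality $\mu_n(\{\tau > L\}) \le M/L$, the contradiction then comes from $P_\sigma(-s\tau)<\infty$ for $s>s_\infty$: this forces $\tau$ to grow fast enough that the sublevel set $\{\tau\le L\}$ is essentially supported on finitely many $1$-cylinders. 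Upgrading $\mu_n \rightharpoonup 0$ to $\mu_n([j]) \to 0$ for each such $j$ requires exploiting shift-invariance of $\mu_n$ to approximate each $[j]$ by the truly compact set $\{x_1 = j,\ x_i \leq f(i)\ \forall i\ge 2\}$ up to an error controlled by $M \sum_{i\ge 2} 1/\tau_{f(i)}$, which can be made arbitrarily small by taking $f$ to grow sufficiently fast. The resulting $\mu_n(\{\tau\le L\}) \to 0$ contradicts $\mu_n(\{\tau \le L\}) \ge 1 - M/L > 0$ for $L > M$.
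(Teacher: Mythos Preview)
Your strategy is the same as the paper's: reduce both inequalities to Theorem~\ref{thm:ec} by showing that $\nu_n \rightharpoonup 0$ on $Y$ is equivalent to $\int\tau\,d\mu_n \to \infty$. The paper handles this equivalence in one line, simply asserting from Kac's formula $\int f\,d\nu_n = \int\Delta_f\,d\mu_n \big/ \int\tau\,d\mu_n$ that ``the only way to have $\int f\,d\nu_n \to 0$ for every $f\in C_c(Y)$ is that $\int\tau\,d\mu_n \to \infty$''; you instead try to justify the implication $\nu_n\rightharpoonup 0 \Rightarrow \int\tau\,d\mu_n\to\infty$ in detail.

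That detailed argument, however, tacitly assumes $\inf_{C_j}\tau \to \infty$ as the cylinder index $j\to\infty$. You use it twice: once to say that $\{\tau\le L\}$ meets only finitely many $1$-cylinders, and again for the uniform tail bound $\mu_n(\{x_1>N\}) \le M/\inf\{\tau(x):x_1>N\}$ (via Markov) that makes your error term $M\sum_{i\ge 2}1/\tau_{f(i)}$ small for fast-growing $f$. Under the BIP hypothesis this is indeed a consequence of $P_\sigma(-s\tau)<\infty$ for some $s>s_\infty$ (since then $\sum_j e^{-s\inf_{C_j}\tau}<\infty$), and BIP is satisfied in the geodesic-flow application, so your argument is sound there. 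But BIP is not listed among the standing hypotheses of the section where Corollary~\ref{cor:ent_inf} sits, and for a general topologically mixing countable shift the deduction ``$P_\sigma(-s\tau)<\infty$ forces $\inf_{C_j}\tau\to\infty$'' is not automatic. The paper's one-line version glosses over exactly the same point, so this is less a defect of your proof than a place where the ambient assumptions of the section could be made explicit.
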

\begin{proof}
Let $(\nu_n)$ be a sequence of $\Phi$-invariant probability measures such that $\nu_n \rightharpoonup 0$. Since
$$\int f d\nu_n = \frac{\int \Delta_f d\mu_n}{\int \tau d\mu_n},$$
the only way to have $\int f d\nu_n \rightarrow 0$ for every continuous function $f\in C^0(Y)$ with compact support, is that
$$\lim_{n\to\infty}\int \tau d\mu_n = \infty.$$
Hence, Theorem \ref{thm:ec} implies that $\limsup_{n\to \infty} h_{\nu_n}(\Phi) \leq s_{\infty}$. On the other hand, Theorem \ref{thm:ec} also says that there exists a sequence $(\nu_n)$ of $\Phi$-invariant probability measures such that $\lim_{n\to\infty}\int \tau d\mu_n = \infty$ and
$$\lim_{n\to \infty} h_{\nu_n}(\Phi) = s_{\infty}.$$
Again, since $\int f d\nu_n = \frac{\int \Delta_f d\mu_n}{\int \tau d\mu_n}$, we have that $\nu_n \rightharpoonup 0$, and the conclusion follows.
\end{proof}

\section{The geodesic flow on Extended Schottky Groups} \label{sec:sch}

\subsection{Some preliminaries in negative curvature}\label{sub_poi}

Let $X$ be a Hadamard manifold with pinched negative sectional curvature, that is a complete simply connected Riemannian manifold whose sectional curvature $K$ satisfies  $-b^{2}\leq K \leq -1$ (for some fixed $b\geq 1$). Denote by $\partial X$ the boundary at infinity of $X$. Finally, denote by $d$ the Riemannian distance on $X$. A crucial object on the study of the dynamics of the geodesic flow is the \emph{Busemann function}. Let $\xi\in\partial X$ and $x,y\in X$. For every geodesic ray $t\mapsto \xi_t$ pointing to $\xi$, the limit
$$B_{\xi}(x,y):=\lim_{t\to\infty}[d(x,\xi_t)-d(y,\xi_t)],$$
always exists, and is independent of the geodesic ray $\xi_{t}$ since $X$ has negative sectional curvature. The Busemann function $B:\partial X\times X^{2}\rightarrow \R$ is the continuous function defined as $B(\xi,x,y)\mapsto B_{\xi}(x,y)$. An (open) \emph{horoball} based in $\xi$ and passing through $x$ is the set of $y\in X$ such that $B_{\xi}(x,y)>0$. In the hyperbolic case, when $X=\disk$, an open horoball based in $\xi\in\mathbb{S}^{1}$ and passing trough $x\in\disk$ is the interior of an euclidean circle containing $x$ and tangent to $\mathbb{S}^{1}$ at $\xi$.

Recall that every isometry of $X$ can be extended to a homeomorphism of $X\cup\partial X$. A very important property of the Busemann function is the following. If $\phi:X\to X$ is an isometry of $X$, then for every $x,y\in X$, we have
\begin{equation}\label{prop:busemann_invariant}
B_{\phi\xi}(\phi x,\phi y)=B_{\xi}(x,y).
\end{equation}

Let $o\in X$ be a reference point, which is often called the \emph{origin} of $X$. The unit tangent bundle $T^1 X$ of $X$ can be identified with $\partial^2 X \times \R$, where $\partial^2 X=(\partial X\times\partial X)\setminus\text{diagonal}$, via Hopf's coordinates. A vector $v\in T^{1}X$ is identified with $(v^{-},v^{+},B_{v^{+}}(o,\pi(v)))$, where $v^{-}$ (resp. $v^{+}$) is the negative (resp. positive) endpoint of the geodesic defined by $v$. Here $\pi:T^{1}X\rightarrow X$ is the natural projection of a vector to its base point. Observe that the geodesic flow $(g_t):T^1 X\to T^1 X$ acts by translation in the third coordinate of this identification.\\

Another crucial object in this setting is the Poincar\'e series. It is intimately related to the topological entropy of the geodesic flow.

\begin{definition} \label{def:ps}
Let $\Gamma$ be a discrete subgroup of isometries of $X$ and let $x\in X$. The \emph{Poincar\'e series} $P_{\Gamma}(s,x)$ associated with $\Gamma$ is defined by
\begin{equation*}
P_{\Gamma}(s,x):=\sum_{g\in \Gamma}e^{-sd(x,gx)}.
\end{equation*}
The \emph{critical exponent} $\delta_\Gamma$ of $\Gamma$ is
\begin{equation*}
\delta_{\Gamma}:= \inf \left\{ s \in \R : 	P_{\Gamma}(s,x) < \infty		\right\}.
\end{equation*}
The group $\Gamma$ is said to be of \emph{divergence type} (resp. \emph{convergence type}) if $P_{\Gamma}(\delta_{\Gamma},x)=\infty$ (resp. $P_{\Gamma}(\delta_{\Gamma},x)<\infty$).
\end{definition}

\begin{remark}
As the sectional curvature of $X$ is bounded from below, the critical exponent is finite. Moreover, by the triangle inequality, it is independent of $x\in X$.
\end{remark}

The isometries of $X$ are categorized in three types. Those fixing an unique point in $X$ called \emph{elliptic isometries}. Those fixing an unique point in $\partial X$ called \emph{parabolic isometries}. And finally, those fixing uniquely two points in $\partial X$ called \emph{hyperbolic isometries}. For $g$ a non-elliptic isometry of $X$, denote by $\delta_{g}$ the critical exponent of the group $<g>$. If $g$ is hyperbolic it is fairly straightforward to see that $\delta_g=0$ and that the group $<g>$ is of divergence type. If $g$ is parabolic, it was shown in \cite[Theorem III.1]{dalpei}, that $\delta_g \geq \frac{1}{2}$.


Let $\Gamma$ be a discrete subgroup of isometries of $X$. Denote by $\Lambda$ the limit set of $\Gamma$, that is, the set $\Lambda=\overline{\Gamma\cdot o}\setminus \Gamma\cdot o$. The group $\Gamma$ is \emph{non-elementary} if $\Lambda$ contains infinitely many elements. We recall the following fact proved in \cite[Proposition 2]{dalotpei}.

\begin{theorem}\label{thm.DP.1} Let $\Gamma$ be a non-elementary discrete subgroup of isometries of a Hadamard  manifold $X$. If $G$ is a divergence type subgroup of $\Gamma$ and its limit set is strictly contained in the limit set of $\Gamma$, then $\delta_\Gamma>\delta_G$.\end{theorem}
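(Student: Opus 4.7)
The strategy is to argue by contradiction: assume $\delta_\Gamma=\delta_G=:\delta$ and derive that $\Lambda(G)=\Lambda(\Gamma)$, contradicting the hypothesis. The plan leans on Patterson--Sullivan theory and the uniqueness of the conformal density in the divergence-type regime.

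First, because $G$ is of divergence type at $\delta$, the classical Patterson--Sullivan construction applied to $G$ produces a $\delta$-conformal $G$-equivariant density $(\mu^G_x)_{x\in X}$ on $\partial X$ whose support is exactly $\Lambda(G)$. From the trivial inequality $P_\Gamma(\cdot,x)\geq P_G(\cdot,x)$ we infer $P_\Gamma(\delta,x)=+\infty$, so $\Gamma$ is itself of divergence type at $\delta_\Gamma=\delta$, and the analogous construction yields a $\delta$-conformal $\Gamma$-equivariant density $(\mu^\Gamma_x)$ supported on $\Lambda(\Gamma)$. The Hopf--Tsuji--Sullivan dichotomy (in its generalization to variable pinched curvature by Roblin) then provides that $(\mu^\Gamma_x)$ is $\Gamma$-ergodic and that $(\mu^G_x)$ is $G$-ergodic. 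Since $\Gamma$ is non-elementary, $\Lambda(\Gamma)$ is the unique minimal closed $\Gamma$-invariant subset of $\partial X$; hence the proper closed subset $\Lambda(G)$ cannot be $\Gamma$-invariant, and there exists $\gamma_0\in\Gamma$ with $\gamma_0\Lambda(G)\not\subseteq \Lambda(G)$, as well as an open set $U\subset \partial X$ with $U\cap \Lambda(\Gamma)\neq \emptyset$ and $U\cap \Lambda(G)=\emptyset$.

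The contradiction is then extracted by comparing the two densities. If $\mu^\Gamma_x|_{\Lambda(G)}$ has positive mass, it is $G$-quasi-invariant and $\delta$-conformal for $G$, so by uniqueness of the $G$-Patterson density it is proportional to $\mu^G_x$ on $\Lambda(G)$. The $\Gamma$-ergodicity of $\mu^\Gamma_x$ then forces $\mu^\Gamma_x$ to be carried by the countable union $\bigcup_{\gamma \in \Gamma} \gamma \Lambda(G)$. Pick $\xi \in U\cap\Lambda(\Gamma)$; applying Sullivan's shadow lemma to $(\mu^\Gamma_x)$ along a geodesic ray to $\xi$ yields that $U$ carries positive $\mu^\Gamma_x$-mass, while the negative curvature geometry (Morse lemma for quasi-geodesics and quantitative visibility) ensures that shadows deep inside $U$ are disjoint from all $\gamma\Lambda(G)$ except through finitely many controlled exceptions, producing the contradiction. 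The remaining case $\mu^\Gamma_x(\Lambda(G))=0$ is excluded because $\mu^G_x$ can be realized as a weak-$\ast$ limit of normalized $G$-Poincar\'e sums, which are dominated termwise by the corresponding $\Gamma$-Poincar\'e sums producing $\mu^\Gamma_x$; the resulting absolute continuity of $\mu^G_x$ with respect to $\mu^\Gamma_x$ is incompatible with $\mu^\Gamma_x(\Lambda(G))=0$.

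The principal obstacle is the quantitative separation in the previous paragraph: one must upgrade the strict set-theoretic inclusion $\Lambda(G)\subsetneq \Lambda(\Gamma)$ into a strict \emph{measure-theoretic} gap between shadow masses, and this is where pinched negative curvature is indispensable (it provides uniform Busemann estimates and the shadow lemma with uniform constants, which together promote the topological gap at $\xi$ into an exponential gap for the $\mu^\Gamma$-mass of nested shadows). The divergence-type hypothesis on $G$ is the load-bearing assumption, since it is what guarantees uniqueness of $\mu^G$ and the ergodicity used to identify the two densities on $\Lambda(G)$.
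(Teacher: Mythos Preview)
The paper does not prove this statement; it merely quotes it from \cite[Proposition 2]{dalotpei}. So there is no ``paper's own proof'' to compare against, only the original argument of Dal'bo--Otal--Peign\'e.

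Your approach via Patterson--Sullivan theory is the right framework, but you have missed the one-line step that makes the argument work and replaced it with two cases that are each problematic. The key observation you are missing is this: a $\delta$-conformal density for $\Gamma$ is automatically a $\delta$-conformal density for $G$, simply because $G\le\Gamma$. So once you have $\delta_\Gamma=\delta_G=\delta$ and a Patterson density $\mu^\Gamma$ for $\Gamma$, apply the Hopf--Tsuji--Sullivan theorem \emph{for $G$} directly to $\mu^\Gamma$: since $G$ is of divergence type, every $\delta$-conformal $G$-density gives full mass to the radial limit set $\Lambda_r(G)\subset\Lambda(G)$. Hence $\mathrm{supp}\,\mu^\Gamma\subset\Lambda(G)$. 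On the other hand $\mathrm{supp}\,\mu^\Gamma$ is a nonempty closed $\Gamma$-invariant set, so by minimality it equals $\Lambda(\Gamma)$. This immediately contradicts $\Lambda(G)\subsetneq\Lambda(\Gamma)$, and no case distinction, shadow comparison, or absolute-continuity argument is needed.

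Concretely, the gaps in your write-up are: in your Case~A, the claim that shadows deep inside $U$ avoid all but finitely many translates $\gamma\Lambda(G)$ is unjustified and in general false, since $\bigcup_\gamma\gamma\Lambda(G)$ can be dense in $\Lambda(\Gamma)$. In your Case~B, termwise domination of the unnormalized Poincar\'e sums does not survive the normalization step (the normalizing constants $P_G(s,x)$ and $P_\Gamma(s,x)$ are different and go in the wrong direction), so you cannot conclude absolute continuity of $\mu^G$ with respect to $\mu^\Gamma$ this way.
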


In particular, if $\Gamma$ is a non-elementary discrete group of isometries and there is an element $g\in \Gamma$ such that $<g>$ is of divergence type, then $\delta_{\Gamma}>\delta_{<g>}$ (see also \cite[Theorem III.1]{dalpei}). Note that a non-elementary group always contains a hyperbolic isometry (in fact, infinitely many non-conjugate of them), hence a non-elementary group $\Gamma$ always satisfies $\delta_\Gamma>0$.\\

We end this subsection  giving an important relation between the topological entropy of the geodesic flow and the critical exponent of a group. Let $X$ be a Hadamard manifold with pinched negative sectional curvature and let $\Gamma$ be a non-elementary free-torsion discrete subgroup of isometries of $X$. Denote by $(g_t):T^1X/\Gamma \to T^1 X/ \Gamma$ the geodesic flow on the unit tangent bundle of the quotient manifold $X/\Gamma$. Otal and Peign\'e \cite[Theorem 1]{op} proved that, if the derivatives of the sectional curvature are uniformly bounded, then the topological entropy $h_{top}(g)$ of the geodesic flow equals the critical exponent of the Poincar\'e series of the group $\Gamma$, that is
\begin{equation}\label{eq:OP}
h_{top}(g)= \delta_\Gamma.
\end{equation}
We stress the fact that the assumption on the derivatives of the sectional curvature is crucial in order to compute the topological entropy of the geodesic flow. This assumption implies the H\"older regularity of the strong unstable and stable foliations (see for instance \cite[Theorem 7.3]{pps}), which is used in the proof of \cite[Theorem 1]{op}.

\subsection{The symbolic model for extended Schottky groups}\label{ss:sr}

In this section we recall the definition of an extended Schottky group. To the best of our knowledge this definition has been introduced in 1998 by Dal'bo and Peign\'e \cite{dalpei}. The basic idea is to extend the classical notion of Schottky groups to the context of manifolds where the non-wandering set of the geodesic flow is non-compact.

Let  $X$ be a Hadamard manifold as in subsection \ref{sub_poi}. Let $N_{1} , N_{2} $ be non-negative integers such that $N_1+N_{2}\geq 2$ and $N_2\ge 1$. Consider $N_{1}$ hyperbolic isometries $h_{1},...,h_{N_{1}}$ and $N_{2}$ parabolic ones $p_{1},...,p_{N_{2}}$ satisfying the following conditions:

\begin{enumerate}

\item[(C1)] For $1\leq i\leq N_{1}$ there exist in $\partial X$ a compact neighbourhood $C_{h_{i}}$ of the attracting point $\xi_{h_{i}}$ of $h_{i}$ and a compact neighbourhood $C_{h_{i}^{-1}}$ of the repelling point $\xi_{h_{i}^{+}}$ of $h_{i}$, such that
    $$h_{i}(\partial X\setminus C_{h_{i}^{-1}})\subset C_{h_{i}}.$$

\item[(C2)] For $1\leq i\leq N_{2}$ there exists in $\partial X$ a compact neighbourhood $C_{p_{i}}$ of the unique fixed point $\xi_{p_{i}}$ of $p_{i}$, such that
$$\forall n\in\Z^{\ast} \quad p_{i}^{n}(\partial X\setminus C_{p_{i}})\subset C_{p_{i}}.$$

\item[(C3)] The $2N_{1}+N_{2}$ neighbourhoods introduced in $(1)$ and $(2)$ are pairwise disjoint.

\item[(C4)] The elementary parabolic groups $<p_{i}>$, for $1\leq i\leq N_{2}$, are of divergence type.

\end{enumerate}

The group $\Gamma=<h_{1},...,h_{N_{1}},p_{1},...,p_{N_{2}}>$ is a non-elementary free group which acts properly discontinuously and freely on $X$ (see \cite[Corollary II.2]{dalpei}). Such a group $\Gamma$ is called an \emph{extended Schottky group}. Note that if $N_2=0$,  that is the group $\Gamma$ only contains hyperbolic elements, then $\Gamma$ is a classical Schottky group and its geometric and dynamical properties are well understood. Indeed, in that case, the non-wandering set $\Omega\subset T^1X/\Gamma$ of the geodesic flow is compact, which implies that $(g_t)|_{\Omega}$ is an Axiom A flow. If $N_2\geq 1$, then $X/\Gamma$ is a non-compact manifold and $\Omega$ is a non-compact subset of $T^1X/\Gamma$. Figure 1 below is an example of a Schottky group acting on the hyperbolic disk $\disk$. It has two generators, one hyperbolic and the other parabolic.

\begin{center}
\begin{tikzpicture}[scale=2.5]

\draw (0,0) circle (1);
\draw (0,1) arc (180:298:0.6);
\draw (0,1) arc (0:-118:0.6);

\draw (0,0) node[above]{$o$};

\draw (0.3333,-0.9428) arc (200:72:0.5);

\draw (-0.3333,-0.9428) arc (-20:108:0.5);

\draw (0,1) node[above]{$C_{p}$} node{$\bullet$};

\draw [very thick] (0,1) arc (90:28:1) (0,1) arc (90:152:1);

\draw [thick] (0,0.4) arc (270:290:0.4) [->];
\draw (0,0.4) node[below]{$p$};
\draw [thick] (0,0.4) arc (270:250:0.4);

\draw [thick] (0,-0.5) arc (90:70:0.5) [->];
\draw (0,-0.5) node[above]{$h$};
\draw [thick] (0,-0.5) arc (90:110:0.5);

\draw (0,-1.3) node{Figure 1. Schottky Group $\Gamma=<h,p>$.};

\draw (0,0) node{$\bullet$};

\draw [very thick] (-0.3333,-0.9428) arc (250:197:1) ;

\draw (-0.715,-0.69) node[below left]{$C_{h^{-1}}$} node{$\bullet$};

\draw [very thick] (0.3333,-0.9428) arc (290:343:1) ;

\draw (0.715,-0.69) node{$\bullet$} node[below right]{$C_{h}$};

\end{tikzpicture}
\end{center}

Let $\mathcal{A}^{\pm}=\{h_{1}^{\pm 1},...,h_{N_{1}}^{\pm 1},p_{1},...,p_{N_{2}}\}$. We now define an extra hypothesis that we will use in large parts of this paper.

\begin{enumerate}
\item[(C5)] Let $a_{1},a_{2}\in\mathcal{A}^{\pm}$ be such that $a_{1}\neq a_{2}^{\pm 1}$. Then, there exists a point $o\in X$ such that for every $\xi\in C_{a_{1}}$, we have
$$B_{\xi}(a^{\pm 1}_{2}o,o)>0.$$
\end{enumerate}
In other words, we will assume that every horoball based in $C_{a_{1}}$ and passing through $a^{\pm 1}_{2}o$ contains the origin in his interior (see for instance Figure 2). This condition is not very restrictive as we will see in Proposition \ref{SchottkyCondition5}.

\begin{center}
\begin{tikzpicture}[scale=2.5]

\draw (0,0) circle (1);
\draw (0,1) arc (180:298:0.6);
\draw (0,1) arc (0:-118:0.6);

\draw (0.3333,-0.9428) arc (200:72:0.5);
\draw (-0.3333,-0.9428) arc (-20:108:0.5);

\draw (0,0) node{$\bullet$} node[above]{$o$};

\draw (-0.6,-0.55) node{$\bullet$} node[above]{$z$};
\draw (0.3,0.9539) node{$\bullet$} node[above]{$\xi$};
\draw [dashed] (0.0296,0.0941) circle (0.9009);
\draw [dashed] (0.15,0.4769) circle (0.4999);

\draw (-0.24,-0.776) node{$\bullet$};
\draw [densely dotted] (-0.24,-0.776)--(0.3,0.9539);
\draw (-0.14,-0.826)--(0.1,-0.05) [>=stealth, <->] node[fill=white,midway,scale=0.7]{$B_{\xi}(z,o)$};

\draw (0,-1.3) node{Figure 2.};

\end{tikzpicture}
\end{center}

\noindent
For $a\in\mathcal{A}^{\pm}$ denote by $U_a$ the convex hull in $X\cup \partial X$ of the set $C_{a}$.
\begin{lemma}\label{lem:bus_dist_comparison} Let $X$ be a Hadamard manifold with pinched negative sectional curvature and let $\Gamma$ be an extended Schottky group. Fix $o\in X$. Then, there exists an universal constant $C>0$ (depending only on the generators of $\Gamma$ and the fixed point $o$) such that for every $a_1,a_2\in\mathcal{A}^{\pm}$ satisfying $a_1\neq a_2^{\pm 1}$, and for every $x\in U_{a_1}$ and $y\in U_{a_2}$, we have
\begin{equation}\label{eq:lem:bus_dist}
d(x,y)\geq d(x,o)+d(y,o)-C.
\end{equation}
\end{lemma}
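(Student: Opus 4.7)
I first reduce the desired inequality to a uniform bound on the distance from $o$ to the geodesic segment $[x,y]$. The inequality $d(x,y)\ge d(x,o)+d(y,o)-C$ is equivalent to an upper bound on the Gromov product $(x,y)_o := \tfrac{1}{2}(d(x,o)+d(y,o)-d(x,y))$. Since $X$ is a Hadamard manifold with sectional curvature $K\le -1$, comparison with the hyperbolic plane yields a universal constant $\kappa$ such that $(x,y)_o \le d(o,[x,y]) + \kappa$: letting $p$ be the foot of the perpendicular from $o$ to the geodesic $[x,y]$, the thin-triangle inequality gives $d(x,y) = d(x,p)+d(p,y)\ge d(x,o)+d(y,o)-2d(o,p)-2\kappa$. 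It therefore suffices to prove that $d(o,[x,y])$ is uniformly bounded over all admissible pairs $(x,y)\in U_{a_1}\times U_{a_2}$ with $a_1\ne a_2^{\pm 1}$.

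\textbf{Compactness argument.} Since $\mathcal{A}^{\pm}$ is finite, it is enough to fix one admissible pair $(a_1,a_2)$ and bound $d(o,[x,y])$ uniformly on $U_{a_1}\times U_{a_2}$. Suppose for contradiction there is a sequence $(x_n,y_n)\in U_{a_1}\times U_{a_2}$ with $d(o,[x_n,y_n])\to\infty$. The closures $\overline{U_{a_i}}$ in the compactification $X\cup\partial X$ are compact, and for convex hulls of closed boundary subsets in a Hadamard manifold one has $\overline{U_{a_i}}\cap\partial X = C_{a_i}$; by condition (C3) these boundary pieces are disjoint. Passing to subsequences, I may assume $x_n\to x_\infty\in\overline{U_{a_1}}$ and $y_n\to y_\infty\in\overline{U_{a_2}}$. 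Three cases arise: (i) both limits lie in $X$, in which case $[x_n,y_n]$ converges to the compact segment $[x_\infty,y_\infty]$ and $d(o,[x_n,y_n])$ remains bounded; (ii) exactly one limit, say $x_\infty$, lies on $\partial X$ with $x_\infty\in C_{a_1}$, so the segments $[x_n,y_n]$ converge locally to the geodesic ray from $y_\infty$ to $x_\infty$, which has finite distance from $o$; (iii) both limits lie on $\partial X$ with $x_\infty\in C_{a_1}$ and $y_\infty\in C_{a_2}$, and since $x_\infty\ne y_\infty$ the bi-infinite geodesic $\gamma_{x_\infty y_\infty}$ has finite distance from $o$. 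In every case this contradicts $d(o,[x_n,y_n])\to\infty$, so a uniform bound $d(o,[x,y])\le R$ must hold.

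\textbf{Conclusion and main obstacle.} Combining the two steps yields the lemma with $C = 2(R+\kappa)$, a constant depending only on the generators (through the sets $C_a$) and on the basepoint $o$. I expect the most delicate point to be the justification that $\overline{U_{a_i}}\cap\partial X=C_{a_i}$ and the continuous dependence of geodesic segments on their endpoints in $X\cup\partial X$; these are classical features of Hadamard geometry but must be invoked carefully in the mixed case of the compactness argument, where $[x_n,y_n]$ degenerates from a compact segment to a geodesic ray and one must argue that the foot of perpendicular from $o$ cannot escape to infinity along this limiting ray.
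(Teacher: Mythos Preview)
Your proof is correct, but it takes a different route from the paper's. The paper's argument is very brief: since $U_{a_1}$ and $U_{a_2}$ are disjoint, the angle $\angle_o(x,y)$ between the segments $[o,x]$ and $[o,y]$ is uniformly bounded below by some $\theta_0>0$, and then CAT($-1$) comparison (the hyperbolic law of cosines in the comparison triangle) yields $d(x,y)\ge d(o,x)+d(o,y)-C(\theta_0)$. Your approach instead bounds the distance $d(o,[x,y])$ via a compactness argument in $X\cup\partial X$, and then deduces the inequality from the plain triangle inequality. The two are morally equivalent --- in a CAT($-1$) space, a uniform lower bound on $\angle_o(x,y)$ and a uniform upper bound on $d(o,[x,y])$ are interchangeable up to constants --- but your argument is in some sense more elementary, since the reduction step needs no curvature comparison at all: with $p$ the nearest point on $[x,y]$ to $o$, one has $d(x,y)=d(x,p)+d(p,y)\ge d(x,o)+d(y,o)-2d(o,p)$ directly from the triangle inequality, so your constant $\kappa$ is in fact superfluous and $C=2R$ suffices. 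Conversely, the paper's angle argument is quicker to state but leaves implicit the same fact you flag as delicate, namely that $\overline{U_{a_i}}\cap\partial X=C_{a_i}$: one needs this to rule out that the visual directions from $o$ into $U_{a_1}$ and $U_{a_2}$ accumulate on a common boundary point, which is exactly what would make the angle degenerate. So your identification of the main obstacle is apt, and your compactness argument handles it cleanly.
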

\begin{proof}
Since $C_{a_1}$ and $C_{a_2}$ are disjoint, for every $a_1,a_2\in\mathcal{A}^{\pm}$ satisfying $a_1\neq a_2^{\pm 1}$, the same happens for the sets $U_{a_1}$ and $U_{a_2}$. Let $x\in U_{a_1}$ and $y\in U_{a_2}$. The geodesic segments $[o,x]$ and $[o,y]$ form an angle uniformly bounded below, hence $d(x,y)\geq d(x,o)+d(y,o)-C$ for an universal constant $C>0$.
\end{proof}

\begin{proposition}\label{SchottkyCondition5} Let $X$ be a Hadamard manifold with pinched negative sectional curvature and $\Gamma=<h_{1},...,h_{N_{1}},p_{1},...,p_{N_{2}}>$ an extended Schottky group. Then, for every $o\in X$ there exists an integer $N\geq 1$ such that the group defined by $<h^{N}_{1},...,h^{N}_{N_{1}},p^{N}_{1},...,p^{N}_{N_{2}}>$ satisfies the Condition (C5).
\end{proposition}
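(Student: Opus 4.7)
My approach is to take the new group $\Gamma_N:=\langle h_1^N,\ldots,h_{N_1}^N,p_1^N,\ldots,p_{N_2}^N\rangle$ with the same compact neighbourhoods as before (they remain pairwise disjoint). Conditions (C1)--(C3) pass at once to higher powers, using for example $h_i^N(\partial X\setminus C_{h_i^{-1}})\subset h_i^{N-1}(C_{h_i})\subset C_{h_i}$ since $h_i(C_{h_i})\subset C_{h_i}$, and similarly for parabolic powers. For (C4), one has $\delta_{p_i^N}=\delta_{p_i}$, and a block-by-block comparison using $|d(o,p_i^{n+1}o)-d(o,p_i^n o)|\leq d(o,p_i o)$ shows that $\sum_m e^{-\delta_{p_i}d(o,p_i^{Nm}o)}$ and $\sum_n e^{-\delta_{p_i}d(o,p_i^n o)}$ either both converge or both diverge; thus divergence type is inherited.

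The substantive part is (C5). I would rewrite the Busemann function using the (boundary-extended) Gromov product via the identity
\[
B_\xi(y,o)\;=\;d(y,o)-2(y\,|\,\xi)_o,
\]
valid in any Hadamard manifold and obtained by unwinding the two definitions together with $d(y,z)-d(o,z)=d(y,o)-2(y\,|\,z)_o$ and letting $z\to\xi$. Thus (C5) for $\Gamma_N$ is equivalent to the strict inequality $d(a_2^{\pm 1}o,o)>2(a_2^{\pm 1}o\,|\,\xi)_o$ for all $\xi\in C_{a_1}$ and every ordered pair $(a_1,a_2)$ of new generators with $a_1\neq a_2^{\pm 1}$; there are only finitely many such pairs.

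The plan then hinges on two limits as $N\to\infty$. First, $d(a_2^{\pm 1}o,o)\to\infty$: linearly in $N$ for hyperbolic generators (via the translation length along the axis), and to infinity for parabolic generators since $\langle p_j\rangle$ acts properly discontinuously with infinite orbit. Second, $(a_2^{\pm 1}o\,|\,\xi)_o$ stays uniformly bounded in $\xi\in C_{a_1}$ and in $N$. Indeed, $a_2^{\pm 1}o$ converges in the cone topology to some boundary point $\zeta$: in the hyperbolic case $\zeta=\xi_{h_j^{\pm 1}}$ by the axis dynamics, and in the parabolic case $\zeta=\xi_{p_j}$ because $\{p_j^n o\}$ lies on a horosphere based at $\xi_{p_j}$ and any infinite discrete orbit on such a horosphere accumulates only at its base point in the cone topology. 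By (C3), $\zeta\in C_{a_2}\cup C_{a_2^{-1}}$ lies outside the compact set $C_{a_1}$, so the joint continuity of $(\cdot\,|\,\cdot)_o$ on the off-diagonal part of $(X\cup\partial X)^2$ combined with compactness of $C_{a_1}$ yields a uniform bound $\sup_{\xi\in C_{a_1}}(a_2^{\pm 1}o\,|\,\xi)_o\leq M$ for $N$ large. Combining the two estimates proves (C5) for $N$ above the maximum of the finitely many thresholds. The most delicate point is the cone-topology convergence for parabolic orbits and the resulting uniform Gromov-product bound; everything else is elementary.
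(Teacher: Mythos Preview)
Your proof is correct and follows essentially the same strategy as the paper: both show that $B_\xi(a_2^{\pm N}o,o)\ge d(a_2^{\pm N}o,o)-C$ for a uniform constant $C$ and then use $d(a_2^{\pm N}o,o)\to\infty$. The only real difference is how the constant $C$ is produced. You rewrite $B_\xi(y,o)=d(y,o)-2(y\,|\,\xi)_o$ and bound the Gromov product via cone-topology convergence $a_2^{\pm N}o\to\zeta\in C_{a_2^{\pm}}$ together with joint continuity of $(\cdot\,|\,\cdot)_o$ off the diagonal of $(X\cup\partial X)^2$ (valid because the curvature is at most $-1$, so $X$ is $\mathrm{CAT}(-1)$). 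The paper instead uses its Lemma~\ref{lem:bus_dist_comparison}: for $x\in U_{a_1}$ and $y\in U_{a_2}$ one has $d(x,y)\ge d(x,o)+d(y,o)-C$, and plugging $x=\xi_t$ on the ray $[o,\xi)$ (which enters $U_{a_1}$ for large $t$) gives $B_\xi(y,o)\ge d(y,o)-C$ directly. This is exactly your Gromov-product bound in disguise---the lemma says $(x\,|\,y)_o\le C/2$ for interior points of the two convex hulls---but it sidesteps the need to discuss convergence of parabolic orbits in the cone topology and the boundary extension of the Gromov product, so it is marginally more elementary. Your extra verification that (C1)--(C4) persist under taking powers is a welcome addition; the paper leaves this implicit.
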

\begin{proof} Let $a_{1},a_{2}\in\mathcal{A}^{\pm}$ and $\xi\in C_{a_{1}}$. Denote by $U_{a^{n}_{2}}$ the convex hull in $X\cup \partial X$ of the set $C_{a^{n}_{2}}$, for $n\geq 1$. Let $z_n\in U_{a^{n}_{2}}$ be such that $B_\xi(z_n,o)$ attains its minimum. Since $B$ is a continuous function and $C_{a_{1}}$ is a compact set, it is sufficient to prove that $B_{\xi}(z_n,o)>0$ for all $n$ large enough. Consider $(\xi_t)=[o,\xi_t)$ the geodesic ray starting in $o$ and pointing to $\xi$. Observe that there exists $T>0$ such that for every $t \geq T$, we have that $\xi_t$ belongs to $U_{a_{1}}$. Since $C_{a_{1}}$ and $C_{a_{2}}$ are disjoint, by Lemma \ref{lem:bus_dist_comparison} there exists a universal constant $C>0$ such that $d(z_n,\xi_{t})\geq d(\xi_{t},o)+d(z_n,o)-C$, for every $t\geq T$. Therefore,
\begin{eqnarray*}
B_{\xi}(z_n,o)&=&\lim_{t\to+\infty} d(z_n,\xi_{t})-d(\xi_{t},o)\\
&\geq& \lim_{t\to+\infty} d(\xi_{t},o)+d(z_n,o)-C-d(\xi_{t},o)\\
&=& d(z_n,o)-C.
\end{eqnarray*}
Since $d(z_n,o)\to \infty$ as $n\to \infty$, there exists a $N\geq 1$ such that $d(z_n,o)>C$ for every $n\geq N$. In particular, we have $B_\xi(z_n,o)>0$ for every $n\geq N$ and the conclusion follows.
\end{proof}

In \cite{dalpei} the authors proved that there exists a $(g_{t})$-invariant subset $\Omega_0$ of $T^{1}X/\Gamma$, contained in the non-wandering set of $(g_{t})$, such that $(g_{t})|_{\Omega_0}$ is topologically conjugated to a suspension flow over a countable Markov shift $(\Sigma,\sigma)$. The Theorem below summarizes their construction together with some dynamical properties.

\begin{theorem}\label{thm:geod_flow_susp_flow} Let $X$ be a Hadamard manifold with pinched negative sectional curvature and let $\Gamma$ be an extended Schottky group. Suppose that $\Gamma$ satisfies Condition (C5). Then, there exists a $(g_{t})$-invariant subset $\Omega_0$ of $T^{1}X/\Gamma$, a countable Markov shift $(\Sigma,\sigma)$ and a function $\tau:\Sigma\to\R$, such that
\begin{enumerate}
\item[(1)] the function $\tau$ is locally H\"older and is bounded away from zero,
\item[(2)] the geodesic flow $(g_{t})|_{\Omega_0}$ over $\Omega_0$ is topologically conjugated to the suspension flow over $\Sigma$ with roof function $\tau$,
\item[(3)] the Markov shift $(\Sigma,\sigma)$ satisfies the BIP condition,
\item[(4)] if $N_1+N_2\geq 3$, then $(\Sigma,\sigma)$ is topologically mixing.
\end{enumerate}
\end{theorem}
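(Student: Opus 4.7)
My plan is to follow the scheme of Dal'bo--Peign\'e, coding geodesics by their successive excursions through the Schottky generators, with the role of Condition (C5) being to guarantee the necessary coarse-geometric estimates at the level of Busemann functions. First I would introduce the alphabet
\[
\mathcal{A}=\{h_1^{\pm 1},\dots,h_{N_1}^{\pm 1}\}\cup\{p_i^{n}:1\le i\le N_2,\ n\in\Z^{\ast}\},
\]
which is countable precisely because each parabolic group $\langle p_i\rangle$ contributes infinitely many ``excursion symbols''. The incidence matrix $M$ on $\mathcal{A}$ would forbid only the cancellation transitions $a\to a^{-1}$ (and, for parabolics, transitions between different powers of the same generator), so that admissible words correspond to reduced words in $\Gamma$. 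Define $\Omega_0\subset T^1X/\Gamma$ as the image under the natural projection of those vectors $v\in T^1X$ whose endpoints $v^{\pm}\in\partial X$ both lie in $\bigcup_{a\in\mathcal{A}^{\pm}} C_a$ and whose forward and backward $\Gamma$-orbits visit the cones $C_a$ infinitely often; this is a $(g_t)$-invariant subset of the non-wandering set. The coding $\pi:\Sigma\to\Omega_0$ reads off the sequence of generators needed to conjugate the successive excursions into the fixed fundamental region cut out by the cones $C_a$, giving $\sigma$-equivariance.

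Next I would define the roof function. Fix a section $S$ transversal to the flow, consisting of unit vectors based on the boundary of this fundamental region and pointing inward; then $\tau(x)$ is the first return time to $S$ of the geodesic coded by $x$. Using the Busemann identity \eqref{prop:busemann_invariant} and Hopf coordinates, $\tau(x)$ can be written as a difference of Busemann functions evaluated at translates $a_1 o$ and $a_2 o$ for the first two symbols of $x$. The quantitative input of Lemma \ref{lem:bus_dist_comparison} together with Condition (C5), which ensures $B_\xi(a_2^{\pm 1}o,o)>0$ for $\xi\in C_{a_1}$, would give that these differences are uniformly bounded below by a positive constant. To obtain Hölder regularity I would invoke the exponential contraction of (un)stable horospheres under $(g_t)$, which in pinched negative curvature with bounded curvature derivatives yields
\[
\bigl|\tau(x)-\tau(y)\bigr|\le C\theta^{n}\quad\text{whenever $x_i=y_i$ for $|i|\le n$,}
\]
for some $\theta\in(0,1)$; this proves (1), while (2) is immediate from the construction of $\pi$.

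For (3), I would exhibit explicitly a BIP alphabet. Pick one hyperbolic generator, say $h_1$, and set $B=\{h_1,h_1^{-1}\}$. For any $a\in\mathcal{A}$, at least one of $h_1,h_1^{-1}$ differs from $a$ and $a^{-1}$, so the transitions $h_1^{\pm 1}\to a$ and $a\to h_1^{\pm 1}$ are admissible for suitable choices of signs, verifying the BIP property. For (4), the hypothesis $N_1+N_2\ge 3$ provides at least three generators, hence at least six elements of $\mathcal{A}^{\pm}$ pairwise non-inverse; given any two admissible cylinders $[a_0\cdots a_k]$ and $[b_0\cdots b_\ell]$, one can connect them by an admissible word of any length $n\ge n_0$ by interpolating through any two distinct ``bridge'' letters $c,c'\in\mathcal{A}^{\pm}$ avoiding the inverses of $a_k$ and $b_0$, proving topological mixing.

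The main obstacle is the Hölder regularity in (1): symbolically the roof function is a geometric quantity, and to obtain the $\theta^n$ bound one needs uniform control on how Busemann cocycles vary when two coded geodesics agree along long central blocks but differ at infinity. This is precisely where Condition (C5) is essential, because it converts qualitative transversality of the Schottky cones into a uniform positive lower bound on the relevant Busemann quantities, so that the standard hyperbolic estimates for the strong stable/unstable foliations (available thanks to the assumption that the curvature derivatives are bounded) can be applied off a compact part of $\Sigma$; the parabolic excursions contribute explicit hyperbolic-type terms whose regularity can be checked by hand.
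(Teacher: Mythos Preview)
Your overall scheme is the paper's: both follow Dal'bo--Peign\'e's coding by Schottky excursions, build the roof function from Busemann cocycles, and read off BIP and mixing from the combinatorics. There are, however, two points where your description diverges from the paper, and one of them is a genuine misattribution.

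First, the alphabet. The paper takes as symbols the blocks $a^{m}$ for \emph{every} generator $a\in\{h_1,\dots,h_{N_1},p_1,\dots,p_{N_2}\}$ and $m\in\Z^{\ast}$, with the transition rule forbidding $a^{m}\to a^{m'}$ (same generator, any powers). You instead keep hyperbolic generators as single letters $h_i^{\pm 1}$ and only block the parabolics. This changes the shift, and in particular your BIP set $B=\{h_1,h_1^{-1}\}$ does \emph{not} work for the paper's alphabet, since there $h_1^{2}\to h_1^{\pm 1}$ is forbidden; the paper instead takes $B$ to be the full generating set $\{h_1,\dots,p_{N_2}\}$, which succeeds because $N_1+N_2\ge 2$ guarantees a generator distinct from any given $a$.

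Second, and more importantly, you misidentify the role of Condition~(C5). In the paper the roof function is $\tau(x)=B_{\xi^{+}}(o,a^{m}o)$, where $a^{m}$ is the first symbol and $\xi^{+}=\omega^{-1}(x)$ depends on the \emph{entire} forward sequence, not just on the first two symbols as you write. Its local H\"older regularity is taken directly from \cite[Lemma~VII]{dalpei} and uses only the pinched-curvature exponential convergence of geodesics; no appeal to (C5) is made, and no hypothesis on derivatives of the sectional curvature (which is not assumed in this theorem) is invoked. Condition~(C5) enters solely to guarantee the uniform lower bound $\tau>0$: one rewrites $\tau(x)=B_{a^{-m}\xi^{+}}(a^{-m}o,o)$, observes that $a^{-m}\xi^{+}$ lies in the cone of the \emph{second} symbol while $a^{-m}o$ sits in the convex hull of $C_{a^{\pm}}$, and then applies (C5) together with compactness of the cones to obtain a uniform positive lower bound (this also ensures that the Busemann quantity really is the \emph{first} return time to the section). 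Your final paragraph has these two ingredients reversed.
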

\begin{proof}
Let $\mathcal{A}=\{h_{1},...,h_{N_{1}},p_{1},...,p_{N_{2}}\}$ and consider the symbolic space $\Sigma$ defined by
$$\Sigma=\{(a_{i}^{m_{i}})_{i\in \Z}: a_{i}\in \mathcal{A}, m_{i}\in\Z \mbox{ and } a_{i+1}\neq a_{i} \forall i\in\Z\}.$$
Note that the space $\Sigma$ is a sequence space defined on the countable alphabet $\{ a_i^{m} : a_i \in \mathcal{A} , m \in \Z\}$. Let $\Lambda^{0}$ be the limit set $\Lambda$ minus the $\Gamma$-orbit of the fixed points of the elements of $\mathcal{A}$. We denote by $\tilde{\Omega}_0$ the set of vectors in $T^{1}X$ identified with $(\Lambda^{0}\times\Lambda^{0}\setminus diagonal)\times \R$ via Hopf's coordinates. Finally, define $\Omega_0:=\tilde{\Omega}_0/\Gamma$, where the action of $\Gamma$ is given by $$\gamma\cdot (\xi^-,\xi^+, s)=(\gamma(\xi^-),\gamma(\xi^+), s-B_{\xi^+}(o,\gamma^{-1}0)).$$
Observe that $\Omega_0$ is invariant by the geodesic flow.

Fix now $\xi_{0}\in \partial X\setminus \bigcup_{a\in\mathcal{A}}C_{a^{\pm}}$, where $C_{a^{\pm}}=C_{a}\cup C_{a^{-1}}$.  Dal'bo and Peign\'e \cite[Property II.5]{dalpei} established the following coding property: for every $\xi\in\Lambda^{0}$ there exists an unique sequence $\omega(\xi)=(a_{i}^{m_{i}})_{i\geq 1}$ with $a_{i}\in\mathcal{A}$, $m_{i}\in\Z^{\ast}$ and $a_{i+1}\neq a_{i}$ such that
$$\lim_{k\rightarrow\infty} a_{1}^{m_{1}}...a_{k}^{m_{k}}\xi_{0}=\xi.$$
For each $a\in\mathcal{A}$ define $\Lambda^{0}_{a^{\pm}}=\Lambda^{0}\cap C_{a^{\pm}}$ and set $\partial^{2}\Lambda^{0}=\bigcup_{\stackrel{\alpha,\beta\in\mathcal{A}}{\alpha\neq\beta}}\Lambda^0_{\alpha^\pm}\times \Lambda^0_{\beta^\pm}$. For any pair $(\xi^{-},\xi^{+})\in \partial^{2}\Lambda^{0}$, if $a^{m}$ is the first term of the sequence $\omega(\xi^{+})$, define $\tilde{\tau}(\xi^{+})=B_{\xi^{+}}(o,a^{m}o)$ and $\overline{T}(\xi^{-},\xi^{+})=(a^{-m}\xi^{-},a^{-m}\xi^{+})$. Define $\overline{T}_{\tau}$ by the formula
$$\overline{T}_{\tau}(\xi^{-},\xi^{+},s)=(\overline{T}(\xi^{-},\xi^{+}),s-\tilde{\tau}(\xi^{+})).$$
Observe that $\overline{T}_{\tau}$ maps $\partial^{2}\Lambda^{0}\times \R$ to itself. The set $\Omega_0$ can be identified with the quotient $\partial^{2}\Lambda^{0}\times \R/<\overline{T}_{\tau}>$. In order to prove this claim,  we first proceed to explain how $\overline{T}_{\tau}$ gives a suspension flow.

Let $(\xi^{-},\xi^{+})\in \partial^{2}\Lambda^{0}$. Suppose that $\omega(\xi^+)=(a_{i}^{m_{i}})$ and let $\gamma_n$ be the element of $\Gamma$ defined as $\gamma_n=a_{1}^{m_{1}}...a_{n}^{m_{n}}$ for $n\geq 1$ and $\gamma_0=Id$. The geodesic determined by $(\xi^{-},\xi^{+})$ in $X$ intersects the horosphere based in $\xi^{+}$ and passing through $\gamma_n o$ in only one point $x^n_{o,\xi^{-},\xi^{+}}$. Denote by $\tilde{v}^n_{o,\xi^{-},\xi^{+}}$ the vector in $T^{1}X$ based in $x^n_{o,\xi^{-},\xi^{+}}$ and pointing to $\xi^{+}$ (see Figure 3 below). Finally, denote by $v^n_{o,\xi^{-},\xi^{+}}$ the projection of $\tilde{v}^n_{o,\xi^{-},\xi^{+}}$ over $T^{1}X/\Gamma$. Set
$$S=\{v^0_{o,\xi^{-},\xi^{+}}:(\xi^{-},\xi^{+})\in \partial^{2}\Lambda^{0}\}\subset T^1 X/\Gamma.$$
Condition (C5) implies that the first return time of every $v^0_{o,\xi^{-},\xi^{+}}\in S$ is given by $\tilde{\tau}(\xi^{+})$ (which is the distance between the horospheres based in $\xi^{+}$ and passing through $o$ and $\gamma_1 o$). Moreover, $g_{\tilde{\tau}(\xi^{+})}(v^0_{o,\xi^{-},\xi^{+}})=v^{1}_{o,\xi^{-},\xi^{+}}$ (see Figure 3).

\begin{center}
\begin{tikzpicture}[scale=2.7]

\draw (0,0) circle (1);
\draw[densely dotted] (0,1) arc (180:298:0.6);
\draw[densely dotted] (0,1) arc (0:-118:0.6);

\draw[densely dotted] (0,1) arc (180:340:0.18);

\draw[densely dotted] (0.38,0.915) arc (150:332:0.15);
\draw[densely dotted] (0.66,0.75) arc (135:305:0.15);


\draw (0.6,0.8) arc (140:233:1) node[below right]{$\xi^{-}$};
\draw[color=blue,line width=1pt] (0.4,-0.1) arc (195:155:1);
\draw (0.47,0.6) node{$\bullet$} node[scale=0.6,left]{$v^1_{o,\xi^{-},\xi^{+}}$};
\draw[line width=1.3pt][>=stealth,->] (0.47,0.6)--(0.54,0.75) ;

\draw[dashed] (0.3,0.4) circle (0.5);
\draw (0.4,-0.1) node{$\bullet$};
\draw (0.4,-0.06) node[scale=0.6,below right]{$v^0_{o,\xi^{-},\xi^{+}}$};
\draw[line width=1.3pt][>=stealth,->] (0.4,-0.1)--(0.37,0.1) ;

\draw (0.6,0.8) node[above right]{$\xi^{+}$};

\draw [color=blue] (0.35,0.27) node[scale=0.6,left]{$\tilde{\tau}(\xi^+)$};

\draw[densely dotted] (0.3333,-0.9428) arc (200:72:0.5);
\draw[densely dotted] (-0.3333,-0.9428) arc (-20:108:0.5);

\draw (0,0) node{$\bullet$} node[above]{$o$};

\draw (0,-1.3) node{Figure 3. Cross section for $\Gamma=<h,p>$};

\end{tikzpicture}
\end{center}

More generally, for every $n\geq 0$ the first return time of $v^{n}_{o,\xi^{-},\xi^{+}}\in S$ is given by $\tilde{\tau}(\gamma^{-1}_n\xi^{+})$ and
$$g_{\tilde{\tau}(\gamma^{-1}_n\xi^{+})}(v^{n}_{o,\xi^{-},\xi^{+}})=v^{n+1}_{o,\xi^{-},\xi^{+}}.$$
Note that for every vector $v\in\Omega_0$ there exists a time $t\in\R$ such that $g_{t}v$ belongs to $S$ (otherwise, a lift of $v$ to $T^{1}M$ would have its positive endpoint $v^+$ in the $\Gamma$-orbit of a fixed point of an element in $\mathcal{A}$). This fact give us the identification.

The coding property implies that the set $\partial^{2}\Lambda^{0}$ is identified with $\Sigma$ by considering $(\xi^{-},\xi^{+})$ as a bilateral sequence $(\omega^*(\xi^{-}),\omega(\xi^{+}))$. If $\omega(\xi^-)=(b_i^{n_i})_{i\ge 1}$, we define $\omega^*(\xi^-)$ as the sequence $(...,b_2^{-n_2},b_1^{-n_1})$, then $(\omega^*(\xi^{-}),\omega(\xi^{+}))$ represent the concatenated sequence. Let $\Sigma^+$ be the one sided symbolic space obtained from $\Sigma$ by forgetting the negative time coordinates. We define the function $\tau:\Sigma^+\to\R$ as
$$\tau(x)=\tau(\omega^{-1}(x))=B_{\omega^{-1}(x)}(o,a^{m}o),$$
where $w:\Lambda^0\to\Sigma$ is the coding function and $a^m$ the first symbol in $w^{-1}(x)$. We extend $\tau$ to $\Sigma$ by making it independent of the negative time coordinates. We therefore have, that the geodesic flow $(g_t)$ on $\Omega_0$ can be coded as the suspension flow on $Y=\{(x,t)\in \Sigma\times \R : 0\leq t\leq \tau(x)\}/\sim$. This implies (2) in the conclusion of Theorem \ref{thm:geod_flow_susp_flow}. Property (1) follows from Lemma \ref{lh} below.

\begin{lemma} \label{lh} Under the hypothesis of Theorem \ref{thm:geod_flow_susp_flow}, the function $\tau:\Sigma \to \R$  depends only on future coordinates, it is locally H\"older and it is bounded away from zero.
\end{lemma}

\begin{proof}
The fact that it depends on the future is by definition and the regularity property was established in \cite[Lemma VII.]{dalpei}. Let $x\in\Sigma^+$ and $\xi=\omega^{-1}(x)$ the point in $\Lambda^{0}$ associated to $x$ by the coding property. The function $\tau$ satisfies
$$\tau(x)=B_{\xi}(o,a^{m}o)=B_{a^{-m}\xi}(a^{-m}o,o).$$
The last term above is greater than 0 by Condition (C5). Indeed, note that $a^{-m}\xi=\omega^{-1}(\sigma x)\not\in C_{a^{\pm}}$ and $a^{-m}o$ is contained in the convex-hull of $C_{a^{\pm}}$ in $X\times \partial X$, hence Condition (C5) applies directly. Since the domains $C_{a}$, for $a\in\mathcal{A}^{\pm}$, are compact and $B$ is continuous, there exists an uniform strictly positive lower bound for $B_{a^{-m}\xi}(a^{-m}o,o)$. Thus $\tau$ is bounded away from zero.
\end{proof}

\begin{lemma} \label{lema:mix} Under the hypothesis of Theorem \ref{thm:geod_flow_susp_flow} the countable Markov shift $(\Sigma,\sigma)$ satisfies the BIP condition. Moreover, if $N_1+N_2\geq 3$, then the countable Markov shift $(\Sigma,\sigma)$ is topologically mixing.
\end{lemma}

\begin{proof}
It is not hard to see that the set $\mathcal{A}$ satisfies the required conditions in order for $(\Sigma, \sigma)$ to be BIP (see definition \ref{def:bip}). Suppose now that $N_1+N_2\geq 3$. Recall that the Markov shift  $(\Sigma,\sigma)$  is topologically mixing if for every $a, b \in \{ a_i^{m} : a_i \in \mathcal{A} , m \in \Z\}$ there exists $N(a,b) \in \N$ such that for every $n >N(a,b)$ there exists an admissible word of length $n$ of the form $a i_1 i_2 \dots i_{n-1}b$. The set of allowable sequences is given by
\begin{equation*}
\{(a_{i}^{m_{i}})_{i\in \Z}: a_{i}\in \mathcal{A}, m_{i}\in\Z \mbox{ and } a_{i+1}\neq a_{i} \forall i\in\N\}.
\end{equation*}
Since $N_1 + N_2 \geq 3$ then given any pair of symbols in $\{ a_i^{m} : a_i \in \mathcal{A} , m \in \Z\}$, say $a_1^{m_1}, a_{2}^{m_2}$ we can consider the symbol $a_3 \notin \{a_1, a_2\}$. Hence the following words are admissible:
\[ a_1^{m_1} a_{3}a_{1}a_{3} \dots a_{1}		a_2^{m_2}  \text{ , }	a_1^{m_1} a_{3}a_{1}a_{3}a_1 \dots a_{3} a_2^{m_2}.	\]
Thus,  the system is topologically  mixing.
\end{proof}
Since Lemma \ref{lema:mix} above shows the points (3) and (4), we have concluded the proof of Theorem \ref{thm:geod_flow_susp_flow}.

\end{proof}

\begin{remark} Under the conditions $(C5)$ and $N_1+N_2\geq 3$ we have proved that $(\Sigma,\sigma)$ is a topologically mixing countable Markov shift satisfying the BIP condition (Lemma \ref{lema:mix}) and that the roof function $\tau$ is locally H\"older and bounded away from zero (Lemma \ref{lh}). Therefore, the associated suspension semi-flow $(Y, \Phi)$ can be studied with the techniques presented in Section \ref{s:pre}.
\end{remark}

So far we have proved that the geodesic flow restricted to the set $\Omega_0$ can be coded by a suspension flow over a countable Markov shift. We now describe, from the ergodic point of view, the geodesic flow in the complement $\left( T^1X/\Gamma \right) \setminus \Omega_0$ of $\Omega_0$. We denote  by $\M_{\Omega_0}$ the space of $(g_t)$-invariant probability measures supported in the set where we have coding, in other words in $\partial^2\Lambda_0\times \R/<\overline{T}_{\tau}>$. We describe the difference between the space $\M_{\Omega_0}$ and the space $\M_g$ of all $(g_t)$-invariant probability measures. Recall that in $\Gamma$ there are hyperbolic isometries $h_1,...,h_{N_1}$, each of which fixes a pair of points in $\partial X$. The geodesic connecting the fixed points of $h_i$ will descend to a closed geodesic in the quotient by $\Gamma$. We denote $\nu^{h_i}$ the probability measure equidistributed along such a geodesic.

\begin{proposition} \label{prop:me}  The set of ergodic measures in $\M_g\setminus \M_{\Omega_0}$ is finite, those are exactly the set $\{\nu^{h_i}: 1\leq i \leq N_1\}$. Moreover, for every $\nu\in \M_g\setminus \M_{\Omega_0}$ we have $h_\nu(g)=0$.
\end{proposition}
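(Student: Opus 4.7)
The plan is to combine Poincar\'e recurrence with the geometric description of $(T^1X/\Gamma)\setminus\Omega_0$ coming from Theorem \ref{thm:geod_flow_susp_flow}. Any $(g_t)$-invariant probability measure is concentrated on the non-wandering set, which here equals the projection of $\{v : v^{\pm}\in\Lambda\}$; since by construction $\Lambda\setminus\Lambda^{0}$ is exactly the $\Gamma$-orbit of the fixed points of the generators in $\mathcal{A}^{\pm}$, the complement of $\Omega_0$ inside the non-wandering set consists of those $[v]$ having at least one endpoint of the form $\gamma\,\xi_{a}$ with $a\in\mathcal{A}^{\pm}$. If $\nu$ is ergodic and $\nu\notin \M_{\Omega_0}$, then $\Omega_{0}$ being $(g_t)$-invariant and measurable forces $\nu(\Omega_{0})\in\{0,1\}$, hence $\nu(\Omega_{0})=0$, and I may assume $\nu$ is supported on that complement.

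The next step is to rule out the parabolic contribution. If $v^{+}$ belongs to the $\Gamma$-orbit of a parabolic fixed point $\xi_{p_i}$, lift so that $\tilde v^{+}=\xi_{p_i}$; then $g_t\tilde v\to\xi_{p_i}$ and the projection to $X/\Gamma$ escapes to infinity in the cusp based at $\xi_{p_i}$, since $B_{\xi_{p_i}}(o,\pi(g_t\tilde v))\to-\infty$ linearly. In particular $v$ is not positively recurrent in $T^1X/\Gamma$; by time-reversal the same conclusion holds if $v^{-}$ lies in the $\Gamma$-orbit of a parabolic fixed point. Poincar\'e recurrence applied to $\nu$ then shows that this whole set carries no $\nu$-mass.

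Consequently $\nu$-a.e.\ $v$ has both endpoints in $\Gamma$-orbits of hyperbolic generator fixed points, say $v^{+}=\gamma\,\xi_{h_i}^{+}$ and $v^{-}=\gamma'\,\xi_{h_j}^{-}$. The geodesic in $X$ joining these endpoints is forward asymptotic to the axis of $\gamma h_i\gamma^{-1}$ and backward asymptotic to the axis of $\gamma' h_j\gamma'^{-1}$, so in $T^1X/\Gamma$ the orbit of $[v]$ is forward asymptotic to the closed geodesic $c_{h_i}$ associated with $h_i$ and backward asymptotic to $c_{h_j}$. Bilateral Poincar\'e recurrence forces the orbit itself to be a closed geodesic, which in turn forces $i=j$ and $[v]\in c_{h_i}$. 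Ergodicity then selects a single $c_{h_i}$, so $\nu=\nu^{h_i}$. Since $\nu^{h_i}$ is supported on a single periodic orbit, $h_{\nu^{h_i}}(g)=0$; this yields the entropy statement for ergodic $\nu$, and then, via the ergodic decomposition and the affinity of the entropy functional, for any $\nu$ supported outside $\Omega_{0}$.

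The hardest step is the asymptotic-to-closed-geodesic argument: one has to exclude orbits that accumulate on $c_{h_i}$ without lying on it, and this uses the standard stable/unstable dichotomy in negative curvature together with the fact that different $\Gamma$-orbits of fixed points in $\mathcal{A}^{\pm}$ give rise to distinct closed geodesics, a consequence of the disjointness of the Schottky neighbourhoods (C1)--(C3). The parabolic step is comparatively easy and relies on the controlled cusp geometry guaranteed by (C2) and (C4).
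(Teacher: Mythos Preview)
Your proposal is correct and follows essentially the same route as the paper: rule out parabolic endpoints via escape into the cusp, then use that a geodesic with a hyperbolic-fixed-point endpoint is asymptotic to the corresponding closed geodesic $c_{h_i}$. The only noteworthy difference is in the final step---you invoke Poincar\'e recurrence to force the generic point onto $c_{h_i}$ (since $\omega(v)\subseteq c_{h_i}$ and $v\in\omega(v)$), whereas the paper argues via Birkhoff's theorem that the time averages along the orbit of a generic $v$ coincide with those along $c_{h_i}$, hence $\nu=\nu^{h_i}$; also, your sign on the Busemann function is reversed (with the paper's convention $B_{\xi_{p_i}}(o,\pi(g_t\tilde v))\to +\infty$), but this is cosmetic.
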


\begin{proof}
Let $\nu\in \M_g\setminus \M_{\Omega_0}$ be an ergodic measure. Take $v$ a generic vector for $\nu$. Since a generic vector is recurrent, the orbit $g_t v$ does not goes to infinity, therefore $v^+$ is not parabolic. Now consider the case when $v$ points toward a hyperbolic fixed point $z$. Let $\gamma:\R\to X$ be a geodesic flowing at positive time to $z$ with initial condition $\gamma'(0)=v$ and let $\gamma_i$ be the geodesic connecting $z$ with the associated hyperbolic fixed point. By reparametrization we can assume $\gamma_i(+\infty)=z$ and that $\gamma_i(0)$ lies in the same horosphere centered at $z$ than $v$. By estimates in \cite{hi} we have $d(\gamma_i(t),\gamma(t))\to 0$ exponentially fast (here $d$ stands for hyperbolic distance, actually in \cite{hi} the stronger exponential decay in the horospherical distance is obtained). Since the vectors along the geodesics are perpendicular to the horospheres centered at $z$ we have the desired geometric convergence in $TX$. Observe $\gamma_i$ descends to a periodic orbit in $TX/\Gamma$. This gives the convergence of $\gamma$ to the periodic orbit and Birkhoff ergodic theorem gives that the measure generated by such a geodesic is exactly one of $\nu^{h_i}$.

The fact that $h_\nu(g)=0$ for every $\nu\in \M_g\setminus \M_{\Omega_0}$ is a classical result for measures supported on periodic orbits.
\end{proof}

We end up this section with a definition that includes the standing assumptions on the Schottky groups considered in most of our statements. The important point is that we will be in position to use Lemma \ref{lh}, Lemma \ref{lema:mix} and Theorem \ref{s_inftyshottky} below.

\begin{definition} We say a extended Schottky group $\Gamma$ satisfies property $(\star)$ if conditions (C5) and $N_1+N_2\geq 3$ hold.
\end{definition}

\subsection{Geometric meaning of $s_\infty$}

One of our main technical results is the following. Let $\tau$ be the roof function constructed in sub-section \ref{ss:sr}. In the next Theorem we give a geometrical characterisation of the value $s_{\infty}$ defined as the unique real number satisfying
\begin{equation*}
P_\sigma(-t\tau)=
\begin{cases}
\textrm{infinite} & \text{ if } t < s_{\infty}; \\
\textrm{finite} & \text{ if } t >s_{\infty}.\end{cases}
\end{equation*}
One of the key ingredients in this paper, and the important result of this section, is the relation between $s_\infty$ and the largest parabolic critical exponent. This relation will allow us to translate several results at the symbolic level into the geometrical one.

\begin{theorem}\label{s_inftyshottky}
Let $\Gamma$ be an extended Schottky group with property $(\star)$. Let $(\Sigma,\sigma)$ and $\tau:\Sigma \to \R$ be the base space and the roof function of the symbolic representation of the geodesic flow $(g_t)$ on $\Omega_0$. Then $s_{\infty}=\max\{\delta_{p_{i}}, 1\leq i \leq N_{2}\}$.
\end{theorem}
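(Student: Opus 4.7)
The proof pivots on a uniform geometric comparison between the roof function $\tau$ and the displacement function of the generators of $\Gamma$: for every admissible sequence $x$ with first symbol $a^m$, one expects $\tau(x) = d(o,a^m o) + O(1)$ with the implicit constant depending only on the generators and the origin $o$. Once this is in place, $P_\sigma(-s\tau)$ is finite precisely when $\sum_{a\in\mathcal{A}}\sum_{m\ne 0} e^{-s\,d(o,a^m o)}$ converges, and the description of $s_\infty$ in terms of parabolic critical exponents is immediate.

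The first task is to establish the comparison. Using the $\Gamma$-invariance \eqref{prop:busemann_invariant},
\[
\tau(x) = B_\xi(o,a^m o) = B_{a^{-m}\xi}(a^{-m}o,o),
\]
where $\xi=\omega^{-1}(x)\in C_a$ and $a^{-m}\xi \in C_b$ for $b$ the second letter of $x$, so $b\ne a^{\pm 1}$. The trivial bound $|B_\eta(x,y)|\le d(x,y)$ yields $\tau(x)\le d(o,a^m o)$. For the reverse inequality, take the geodesic ray $\gamma\colon[0,\infty)\to X$ from $a^{-m}o$ to $a^{-m}\xi$. Since $a^{-m}\xi\in C_b$ and $\gamma(t)$ enters $U_b$ for $t$ large, while $a^{-m}o$ lies in $U_{a^{\mp}}$, Lemma \ref{lem:bus_dist_comparison} gives
\[
t = d(a^{-m}o,\gamma(t)) \ge d(a^{-m}o,o) + d(o,\gamma(t)) - C.
\]
Taking $t\to\infty$ and using $d(a^{-m}o,o)=d(o,a^m o)$ yields $\tau(x) \ge d(o,a^m o) - C$, with $C$ independent of $a,m,b$.

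The second task is to convert finiteness of $P_\sigma(-s\tau)$ into convergence of an explicit series. The BIP property together with local H\"older regularity of $\tau$ (Lemma \ref{lh}) allows one to control the Gurevich partition function by the one-step sum: by standard estimates for the Ruelle operator on BIP shifts (see \cite{Sar99,mubook}), there is a constant $C_0$ with
\[
Z_n(-s\tau,a_0) \le C_0\Bigl(\sum_{b}e^{-s\inf_{C_b}\tau}\Bigr)^{n},
\]
so $P_\sigma(-s\tau)<\infty$ whenever the base of the power is finite. Conversely, for any fixed letter $c\in\mathcal{A}^{\pm}$ distinct from a chosen parabolic generator $p_{i_0}$ (existing because $N_1+N_2\ge 3$), the period-$2$ orbits of the form $(p_{i_0}^m,c,p_{i_0}^m,c,\ldots)$ are all admissible, and by the geometric estimate
\[
Z_2(-s\tau,p_{i_0}) \ge C'\sum_{m\ne 0} e^{-s\,d(o,p_{i_0}^m o)},
\]
since $\tau(c p_{i_0}^m\cdots)$ is uniformly bounded in $m$. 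Thus $P_\sigma(-s\tau)=\infty$ as soon as $\sum_m e^{-s\,d(o,p_{i_0}^m o)}=\infty$.

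Combining both tasks, observe that $\sum_{m\ne 0} e^{-s\,d(o,h_j^m o)}\asymp \sum_m e^{-s|m|\ell(h_j)}$ converges for all $s>0$, whereas $\sum_{m\ne 0} e^{-s\,d(o,p_i^m o)}$ is the Poincar\'e series of $\langle p_i\rangle$, which by (C4) converges iff $s>\delta_{p_i}$. Choosing $i_0$ with $\delta_{p_{i_0}}=\delta_{p,\max}$, the upper bound gives $s_\infty \le \delta_{p,\max}$ and the lower bound gives $s_\infty \ge \delta_{p_{i_0}}=\delta_{p,\max}$, whence equality. The main technical obstacle is the \emph{uniformity} in $(a,m)$ of the geometric comparison; without it the reduction to parabolic Poincar\'e series, which is what encodes the divergence-type hypothesis (C4), cannot be carried out cleanly.
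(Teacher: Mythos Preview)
Your argument is correct and follows the paper's approach: the heart of both proofs is the uniform two-sided comparison $d(o,a^m o)-C\le\tau(x)\le d(o,a^m o)$ obtained from the trivial Busemann bound and Lemma~\ref{lem:bus_dist_comparison}, after which finiteness of $P_\sigma(-s\tau)$ reduces to convergence of the elementary Poincar\'e series $\sum_m e^{-sd(o,a^m o)}$. Two minor remarks: the cylinder in your $Z_2$ estimate should be $C_c$ rather than ``$p_{i_0}$'' (the sum runs over $m$, so the fixed symbol is $c$), and the appeal to BIP/Ruelle-operator theory is unnecessary since the bound $Z_n\le\bigl(\sum_b e^{-s\inf_{C_b}\tau}\bigr)^{n}$ is the trivial inequality $S_n(-s\tau)(x)\le\sum_i\sup_{C_{x_i}}(-s\tau)$---this is exactly what the paper does; conversely your period-$2$ lower bound is a slightly cleaner combinatorial device than the paper's cycling through all generators.
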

\begin{proof}
We first show that $s_{\infty}\leq \max\{\delta_{p_{i}}, 1\leq i \leq N_{2}\}$.
\begin{eqnarray*}
P_\sigma(-t\tau)&=& \lim_{n \to \infty} \frac{1}{n+1} \log \sum_{x:\sigma^{n+1}x=x} \exp \left(\sum_{i=0}^{n} -t\tau(\sigma^{i}x)\right)  \chi_{C_{h_{1}}}(x)\\
&=& \lim_{n \to \infty} \frac{1}{n+1} \log  \sum_{\xi=\overline{h_{1}x_{2}...x_{n}x_{n+1}}\xi_{0}} \exp \left(\sum_{i=0}^{n} -t B_{\omega^{-1}(\sigma^{i}x)}(o,x_{i+1}o)\right)\\
&\geq& \lim_{n \to \infty} \frac{1}{n+1} \log  \sum_{\xi=\overline{h_{1}x_{2}...x_{n+1}}\xi_{0}} \exp \left(\sum_{i=0}^{n} -t d(o,x_{i+1}o)\right)
\end{eqnarray*}
The last inequality follows from $d(x,y)\ge B_\xi(x,y)$. By removing words having $h_1^m$ (some $m$) in more places than just the first coordinate, we conclude that the argument of the function $\log$ in the limit above is greater than
$$e^{-td(o,h_1o)}\sum_{(c_{1},...,c_{n})\in (\mathcal{A}\setminus h_1)_*^n}\sum_{(m_{1},...,m_{n})\in\Z^n} \exp \left(\sum_{i=1}^{n} -t d(o,c_{i}^{m_{i}}o)\right),$$
where $(\mathcal{A}\setminus h_1)_*^n$ represent the set of admissible words of length $n$ for the code, i.e. $c_i\ne c_{i+1}^{\pm 1}$. Let $k\geq 1$. For all $0\leq j\leq k-1$ and $1\leq i\leq N_{1}+N_{2}-1$, define
$$
b_{i+j(N_{1}+N_{2}-1)} =
\begin{cases}
h_{i+1}, & \text{if } 1\leq i\leq N_{1}-1 \\
p_{i+1-N_{1}}, & \text{if } N_{1} \leq i \leq N_{1}+N_{2}-1.
\end{cases}
$$
Consider $n+1=k(N_{1}+N_{2}-1)$. By restricting the above sum to words with $c_i=b_i$ for every $i=1,...,n$, we can continue the sequence of inequalities above to get
$$P_\sigma(-t\tau) \ge \sum_{m_{1},...,m_{n}\in\Z} \exp \left(\sum_{i=1}^{n} -t d(o,b_{i}^{m_{i}}o)\right),$$
where the right-hand side is equal to
$$\prod_{i=1}^{n}\sum_{m\in\Z}\exp(-td(o,b_{i}^{m}o)).$$
By definition of the $b_{i}$'s, the last term is equal to
$$\Bigg( \prod_{i=2}^{N_{1}}\sum_{m\in\Z}\exp(-td(o,h_{i}^{m}o)) \Bigg)^{k}\Bigg( \prod_{i=1}^{N_{2}}\sum_{m\in\Z}\exp(-td(o,p_{i}^{m}o)) \Bigg)^{k}.$$
Hence, it follows that

\begin{eqnarray*}
P_\sigma(-t\tau)&\geq& \frac{1}{N_{1}+N_{2}} \log \Bigg( \prod_{i=2}^{N_{1}}\sum_{m\in\Z}\exp(-td(o,h_{i}^{m}o)) \Bigg)\Bigg( \prod_{i=1}^{N_{2}}\sum_{m\in\Z}\exp(-td(o,p_{i}^{m}o)) \Bigg)\\
&=& \frac{1}{N_{1}+N_{2}} \log \prod_{a\in\mathcal{A}\setminus h_{1}} P_{<a>}(t,o).
\end{eqnarray*}
In particular, if $t<\max\{\delta_{p_{i}}, 1\leq i \leq N_{2}\}$ then $P_\sigma(-t\tau)=+\infty$. This shows that $s_{\infty}\geq \max\{\delta_{p_{i}}, 1\leq i \leq N_{2}\}$.\\

We prove now the other inequality. Let $(\xi^i_t)$ be the geodesic ray $[o,\omega^{-1}(\sigma^{i+1}x))$. Using (\ref{eq:lem:bus_dist}), we have
\begin{eqnarray*}
\tau(\sigma^{i}x)&=&B_{\omega^{-1}(\sigma^{i}x)}(o,x_i o)\\
&=&B_{\omega^{-1}(\sigma^{i+1}x)}(x^{-1}_i o,o)\\
&=&\lim_{t\to\infty} d(\xi^i_t,x_i o)-d(\xi^i_t,o)\\
&\geq& [d(\xi^i_t,o)+d(o,x_i o)-C]-d(\xi^i_t,o)\\
&=& d(o,x_i o)-C.
\end{eqnarray*}
Thus,
$$\exp(-t\tau(\sigma^{i}x))\leq \exp(tC)\exp(-td(o,x_i o)).$$
Therefore,
\begin{eqnarray*}
P_\sigma(-t\tau)&\leq& \lim_{n \to \infty} \frac{1}{n} \log \sum_{a_{1},...,a_{n}}\sum_{m_{1},...,m_{n}} \prod_{i=1}^{n}\exp(tC)\exp(-td(o,a_i^{m_i} o))\\
&=& \log \left(C^{t}\prod_{a\in\mathcal{A}}P_{<a>}(t,o)\right).
\end{eqnarray*}
In particular, the pressure $P_\sigma(-t\tau)$ is finite for every $t>\max\{\delta_{p_{i}}, 1\leq i \leq N_{2}\}$.
\end{proof}

Denote by $\delta_{p,max}:=\max\{\delta_{p_{i}}, 1\leq i \leq N_{2}\}$. The simplest example to consider is a real hyperbolic space $X$. In this case $\delta_{<p_i>}=1/2$ for any $i\in\{1,...,N_2\}$. In particular $\delta_{p,max}=1/2$. More generally, if we replace hyperbolic space by a manifold of constant negative curvature equal to $-b^2$ then $\delta_{<p>}=b/2$. In the case of non-constant curvature some bounds are known, indeed if the curvature is bounded above by $-a^2$ then $\delta_{<p>} \geq a/2$ (see \cite{dalotpei}).





Recall that at a symbolic level we have $h_{top}(\Phi)=\inf \left\{ t : P_{\sigma}(-t \tau ) \leq 0\right\}$. In particular, if the derivatives of the sectional curvature are uniformly bounded, then Theorem \ref{thm:geod_flow_susp_flow}, Proposition \ref{prop:me} and equality \eqref{eq:OP} imply
\begin{equation}\label{eq:OP_symbolic}
h_{top}(g)= \delta_\Gamma=h_{top}(\Phi).
\end{equation}
The existence of a measure of maximal entropy for the flow $(g_t)$ is related to convergence properties of the Poincar\'e series at the critical exponent. Indeed, using the construction of Patterson and Sullivan (\cite{pa},\cite{su}) of a $\Gamma$-invariant measure on $\partial^2 X$, it is possible to construct a measure on $T^1 X$ which is invariant under the action of $\Gamma$ and the geodesic flow. This measure induces a $(g_t)$-invariant measure on $T^1X/\Gamma$ called the Bowen-Margulis measure. It turns out that, if the group $\Gamma$ is of convergence type then the Bowen-Margulis measure is infinite and dissipative. Hence the geodesic flow does not have a measure of maximal entropy. On the other hand, if the group $\Gamma$ is of divergence type then the Bowen-Margulis measure is ergodic and conservative. If finite, it is the measure of maximal entropy.

It is, therefore, of interest to determine conditions that will ensure that the group is of divergence type and that the Bowen-Margulis measure is finite. It is along these lines that Dal'bo, Otal and Peign\'e \cite{dalotpei} introduced the following:

\begin{definition} \label{def:pgc}
A geometrically finite  group $\Gamma$ satisfies the \emph{parabolic gap condition (PGC)} if its critical exponent $\delta_{\Gamma}$ is strictly greater than the one of each of its parabolic subgroups.
\end{definition}

It was shown in \cite[Theorem A]{dalotpei} that if a group satisfies the PGC-condition then the group is divergent and the measure of Bowen-Margulis is finite  \cite[Theorem B]{dalotpei}. In particular it has a  measure of maximal entropy.
Note that a divergent group in the case of constant negative curvature satisfies the PGC-property.

In our context, an extended Schottky group is a geometrically finite group such that all the parabolic subgroups have rank 1. Moreover, by Condition (C4) and Theorem \ref{thm.DP.1}, it satisfies the PGC-condition. Thus, the following property is a direct consequence of Theorem \ref{s_inftyshottky}, Theorem \ref{thm.DP.1} and \eqref{eq:OP_symbolic}.
\begin{proposition}\label{s_inf_leq_entropy}
Let $X$ be a Hadamard manifold with pinched negative sectional curvature and let $\Gamma$ be an extended Schottky group. Assume that the derivatives of the sectional curvature are uniformly bounded. If $(Y, \Phi)$ is the symbolic representation of the geodesic flow on $T^1X/\Gamma$, then $s_{\infty} < h_{top}(\Phi)$.
\end{proposition}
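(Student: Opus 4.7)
The plan is to assemble the proposition from three ingredients that have already been put in place: the symbolic identification $s_\infty=\delta_{p,\max}$ established in Theorem \ref{s_inftyshottky}, the equality $h_{top}(\Phi)=\delta_\Gamma$ recorded in \eqref{eq:OP_symbolic}, and the strict gap theorem of Dal'bo--Otal--Peign\'e (Theorem \ref{thm.DP.1}). So the real content is reduced to producing a strict inequality $\delta_\Gamma>\delta_{p,\max}$ at the level of critical exponents of Poincar\'e series.

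First I would pick an index $i_0\in\{1,\ldots,N_2\}$ realizing the maximum, so that $\delta_{\langle p_{i_0}\rangle}=\delta_{p,\max}$. The cyclic parabolic subgroup $G:=\langle p_{i_0}\rangle$ is of divergence type by Condition (C4) in the definition of an extended Schottky group. Its limit set is reduced to the single fixed point $\xi_{p_{i_0}}\in\partial X$, whereas the limit set $\Lambda$ of $\Gamma$ is uncountable because $\Gamma$ is non-elementary (it contains at least one hyperbolic generator by construction, and in fact $N_1+N_2\ge 2$). Hence $\Lambda(G)\subsetneq\Lambda(\Gamma)$. Theorem \ref{thm.DP.1} applies and gives
\begin{equation*}
\delta_\Gamma>\delta_G=\delta_{p,\max}.
\end{equation*}

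Now I combine this with the two symbolic/geometric identifications. Theorem \ref{s_inftyshottky} gives $s_\infty=\delta_{p,\max}$, and the Otal--Peign\'e formula together with Theorem \ref{thm:geod_flow_susp_flow} and Proposition \ref{prop:me}, packaged as \eqref{eq:OP_symbolic}, gives $h_{top}(\Phi)=\delta_\Gamma$. Chaining the inequalities,
\begin{equation*}
s_\infty=\delta_{p,\max}<\delta_\Gamma=h_{top}(\Phi),
\end{equation*}
which is the desired strict inequality. The hypothesis that the derivatives of the sectional curvature are uniformly bounded enters only through \eqref{eq:OP_symbolic}, where it is needed to invoke the Otal--Peign\'e identification of topological entropy with the critical exponent via H\"older regularity of the stable/unstable foliations; all other inputs are purely group-theoretic.

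There is essentially no obstacle: once Theorem \ref{s_inftyshottky} is in hand, this proposition is a short bookkeeping argument. The only thing to be careful about is verifying the two hypotheses of Theorem \ref{thm.DP.1}, namely that the parabolic subgroup chosen is divergent (supplied by (C4)) and that its limit set is strictly smaller than $\Lambda(\Gamma)$ (immediate from non-elementarity together with the fact that a cyclic parabolic has a singleton limit set). Everything else is substitution.
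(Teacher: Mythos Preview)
Your proposal is correct and matches the paper's own argument essentially line for line: the paper states the proposition as a direct consequence of Theorem \ref{s_inftyshottky}, Theorem \ref{thm.DP.1}, and \eqref{eq:OP_symbolic}, invoking Condition (C4) to guarantee that the relevant parabolic subgroup is of divergence type so that Theorem \ref{thm.DP.1} applies. If anything, you are more explicit than the paper in verifying the hypotheses of Theorem \ref{thm.DP.1}.
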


\section{Escape of Mass for geodesic flows}\label{e:m}

This section contains our main results relating the escape of mass of a sequence of invariant probability measures for a class of geodesic flows defined over non-compact manifolds. We prove that there is a uniform bound, depending only in the entropy of a measure, for the amount of mass a measure can give to the cusps. We also characterise the amount of entropy that the cusp can have.  These results are similar in spirit to those obtained in \cite{ekp, elmv, klkm} for other types of flows.

\begin{theorem} \label{thm:ent_cusp_flow_symbolic_model}
Let $X$ be a Hadamard manifold with pinched negative sectional curvature and let $\Gamma$ be an extended Schottky group of isometries of $X$ with property $(\star)$. Assume that the derivatives of the sectional curvature are uniformly bounded. Then, for every $c>\delta_{p,max}$ there exists a constant $M=M(c)>0$ such that for every $\nu \in \M_{\Omega_0}$ with $h_\nu(g)\geq c$, we have
\begin{equation*}
\int \tau d \mu \leq M,
\end{equation*}
where $\nu$ has a symbolic representation as  $(\mu \times m)/((\mu \times m)(Y))$. Moreover, the value $\delta_{p,max}$ is optimal in the following sense: there exists a sequence $\nu_n \in \M_{\Omega_0}$ of $g$-invariant probability measures such that $\lim_{n \to \infty} \int \tau d \mu_n= \infty$ and
\[\lim_{n \to \infty} h_{\nu_n}(g) =  \delta_{p, max}.\]
\end{theorem}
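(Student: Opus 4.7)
The plan is to transfer both statements through the symbolic conjugation provided by Theorem~\ref{thm:geod_flow_susp_flow}, using the geometric identification $s_{\infty}=\delta_{p,max}$ from Theorem~\ref{s_inftyshottky}. Since $\Gamma$ has property $(\star)$, Theorem~\ref{thm:geod_flow_susp_flow} yields a topologically mixing countable Markov shift $(\Sigma,\sigma)$ with the BIP property, together with a locally H\"older roof function $\tau:\Sigma\to\R$ bounded away from zero, such that the geodesic flow $(g_t)|_{\Omega_0}$ is topologically conjugate to the suspension flow $(Y,\Phi)$ over $(\Sigma,\sigma)$ with roof $\tau$. Under this conjugation, the space $\M_{\Omega_0}$ is canonically identified with $\M_{\Phi}$, and each $\nu \in \M_{\Omega_0}$ corresponds, via Ambrose--Kakutani, to a measure of the form $\nu = (\mu \times m)/((\mu \times m)(Y))$ with $\mu \in \M_{\sigma}$ and $\int \tau \, d\mu <\infty$; the Abramov formula gives $h_\nu(g)=h_\mu(\sigma)/\int \tau\,d\mu$. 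Moreover, by Theorem~\ref{s_inftyshottky}, the symbolic parameter $s_\infty$ equals $\delta_{p,max}$.

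For the first assertion, fix $c>\delta_{p,max}=s_{\infty}$. I would apply Theorem~\ref{thm:ent_cusp} directly to the suspension flow $(Y,\Phi)$: for any choice of $s\in(s_{\infty},c)$, the argument given there (the affine function $L(t)=h_\mu(\sigma)-t\int \tau\,d\mu$ satisfies $L(c)\ge 0$ when $h_\mu(\sigma)/\int\tau\,d\mu=h_\nu(\Phi)\ge c$, and $L(s)\le P_\sigma(-s\tau)<\infty$ by the variational principle) yields
\[
\int \tau \, d\mu \;\le\; \frac{P_\sigma(-s\tau)}{c-s}.
\]
Setting $M(c):=\inf_{s\in(s_{\infty},c)}\frac{P_\sigma(-s\tau)}{c-s}$ gives the desired uniform bound, depending only on $c$ and not on $\nu$. (If $c>h_{top}(g)$ the assertion is vacuous, and the bound above remains valid in the borderline case $c=h_{top}(g)$.)

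For the optimality claim I would invoke Theorem~\ref{thm:ec}, whose hypothesis $s_{\infty}<h_{top}(\Phi)$ is precisely Proposition~\ref{s_inf_leq_entropy} (which in turn relies on the PGC condition, guaranteed here by hypothesis (C4) together with Theorem~\ref{thm.DP.1}). That theorem produces a sequence $(\mu_n)\subset \M_{\sigma}$ of equilibrium measures on an exhaustion of $\Sigma$ by compact invariant subsets, satisfying $\int \tau\, d\mu_n\to\infty$ and $h_{\mu_n}(\sigma)/\int\tau\,d\mu_n \to s_\infty = \delta_{p,max}$. The associated probability measures $\nu_n=(\mu_n\times m)/((\mu_n\times m)(Y))$ are well defined because each $\mu_n$ is supported on a compact set on which $\tau$ is bounded, and via the conjugation they correspond to a sequence in $\M_{\Omega_0}$ with the required properties.

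The genuinely non-routine input is the geometric identification $s_{\infty}=\delta_{p,max}$ and the strict inequality $s_{\infty}<h_{top}(\Phi)$; these are the heart of the matter, and they have already been established in Theorem~\ref{s_inftyshottky} and Proposition~\ref{s_inf_leq_entropy}. Once these two facts are in hand, the present theorem follows by direct application of Theorems~\ref{thm:ent_cusp} and~\ref{thm:ec} under the conjugation of Theorem~\ref{thm:geod_flow_susp_flow}.
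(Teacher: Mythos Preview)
Your proposal is correct and follows exactly the paper's approach: the paper's proof is the single sentence ``This is a direct consequence of Theorems~\ref{thm:ent_cusp} and~\ref{thm:ec} using the symbolic model for the geodesic flow on $T^1X/\Gamma$,'' and you have faithfully unpacked precisely that, identifying the needed inputs (Theorem~\ref{thm:geod_flow_susp_flow} for the coding, Theorem~\ref{s_inftyshottky} for $s_\infty=\delta_{p,max}$, and Proposition~\ref{s_inf_leq_entropy} for $s_\infty<h_{top}(\Phi)$).
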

\begin{proof}
This is a direct consequence of Theorems \ref{thm:ent_cusp} and \ref{thm:ec} using the symbolic model for the geodesic flow on $T^1X/\Gamma$.
\end{proof}

The following corollary is just an equivalence of the first conclusion in Theorem \ref{thm:ent_cusp_flow_symbolic_model} (see also Theorem \ref{thm:ec}).

\begin{corollary}\label{cor:ent_cusp_flow}
Assume $X$ and $\Gamma$ as in Theorem \ref{thm:ent_cusp_flow_symbolic_model}. If $\nu_n \in \M_{\Omega_0}$ is a sequence of $(g_t)$-invariant probability measures such that $\lim_{n \to \infty} \int \tau d \mu_n= \infty$, then
\[\limsup_{n \to \infty} h_{\nu_n}(g) \leq  \delta_{p, max}.\]
\end{corollary}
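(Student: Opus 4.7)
The plan is to deduce the corollary directly from Theorem \ref{thm:ent_cusp_flow_symbolic_model} by a straightforward contrapositive argument, exploiting the symbolic model once more. Since $\nu_n \in \M_{\Omega_0}$, each $\nu_n$ admits a symbolic representation $\nu_n=(\mu_n\times m)/((\mu_n\times m)(Y))$ with $\mu_n \in \M_\sigma$, so the quantity $\int \tau\, d\mu_n$ is well-defined and the hypothesis makes sense.

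First I would argue by contradiction: suppose $\limsup_{n\to\infty} h_{\nu_n}(g) > \delta_{p,max}$. Then I can choose a real number $c$ with $\delta_{p,max} < c < \limsup_{n\to\infty} h_{\nu_n}(g)$, and extract a subsequence $(\nu_{n_k})_k$ satisfying $h_{\nu_{n_k}}(g)\geq c$ for every $k$.

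Next I would apply the first conclusion of Theorem \ref{thm:ent_cusp_flow_symbolic_model} to this subsequence: because $c>\delta_{p,max}$, that theorem provides a uniform constant $M=M(c)>0$ such that
\begin{equation*}
\int \tau\, d\mu_{n_k} \leq M \quad \text{for every } k.
\end{equation*}
This contradicts the hypothesis $\lim_{n\to\infty}\int \tau\, d\mu_n=\infty$, which forces the same limit along any subsequence. The desired inequality $\limsup_{n\to\infty} h_{\nu_n}(g) \leq \delta_{p,max}$ follows.

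There is really no obstacle here beyond recognizing the equivalence: Theorem \ref{thm:ent_cusp_flow_symbolic_model} is a statement of the form \emph{large entropy implies bounded $\int \tau\, d\mu$}, whereas the corollary is its contrapositive \emph{unbounded $\int \tau\, d\mu$ implies limited entropy}. The only points worth checking are that the symbolic representation is available for every $\nu_n$ (this uses that $\nu_n \in \M_{\Omega_0}$ together with Theorem \ref{thm:geod_flow_susp_flow}), and that the Abramov formula $h_{\nu_n}(g)=h_{\mu_n}(\sigma)/\int \tau\, d\mu_n$ transports the entropy hypothesis to the base, which is implicit in how Theorem \ref{thm:ent_cusp_flow_symbolic_model} is proved via Theorems \ref{thm:ent_cusp} and \ref{thm:ec}.
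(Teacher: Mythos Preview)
Your proof is correct and matches the paper's approach exactly: the paper does not even spell out an argument, simply noting that the corollary ``is just an equivalence of the first conclusion in Theorem \ref{thm:ent_cusp_flow_symbolic_model}'', which is precisely the contrapositive you have written out.
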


We are now in position to prove the main result about escape of mass.

\begin{theorem-non}[\ref{thm:escape_mass}] Let $X$ be a Hadamard manifold with pinched negative sectional curvature and let $\Gamma$ be an extended Schottky group of isometries of $X$ with property $(\star)$. Assume that the derivatives of the sectional curvature are uniformly bounded. Then, for every $c>\delta_{p,max}$ there exists a constant $m=m(c)>0$, with the following property: If $(\nu_n)$ is a sequence of ergodic $(g_t)$-invariant probability measures on $T^1X/\Gamma$ satisfying $h_{\nu_n}(g)\geq c$, then for every vague limit $\nu_n \rightharpoonup \nu$, we have
$$\nu(T^1X/\Gamma)\geq m.$$
In particular if $\nu_n \rightharpoonup 0$ then $\limsup h_{\nu_n}(g)\le \delta_{p,max}$. Moreover, the value $\delta_{p,max}$ is optimal in the following sense: there exists a sequence $(\nu_n)$ of $(g_t)$-invariant probability measures on $T^1X/\Gamma$ such that $h_{\nu_n}(g)\to \delta_{p, max}$ and $\nu_n \rightharpoonup 0$.
\end{theorem-non}

\begin{proof} Since every ergodic measure in $\M_g\setminus \M_{\Omega_0}$ has zero entropy, we can suppose that $\nu_n$ belongs to $\M_{\Omega_0}$ for every $n\in\N$. Observe now that the cross-section $S\subset T^1X/\Gamma$ defined in the proof of Theorem \ref{thm:geod_flow_susp_flow} is bounded. Hence, using the identification $\Psi:\Omega_0\to Y$ given by Theorem \ref{thm:geod_flow_susp_flow} and fixing $0<r\leq \inf_{x\in\Sigma} \tau(x)$, there exists a compact set $K_r\subset T^1X/\Gamma$ such that
$$\Sigma\times [0,r]/\sim  \ \subset \Psi(K_r).$$
Let $\mu_n$ be the probability measure on $\Sigma$ associated to the symbolic representation of $\nu_n$. By Theorem \ref{thm:ent_cusp_flow_symbolic_model}, we have
$$\int \tau d\mu_n \leq M.$$
Hence,
\begin{eqnarray*}
\nu_n(K_r) &=& \Psi_{*}\nu_n (\Psi(K_r)) \geq \Psi_{*}\nu_n (\Sigma\times [0,r]/\sim)\\
&=& \frac{\int_\Sigma \int_0^r dtd\mu_n}{\int \tau d\mu_n}\\
&\geq& \frac{r}{M}.
\end{eqnarray*}
In other words, every vague limit $\nu$ of the sequence of ergodic probability measures $(\nu_n)_n$ satisfies $\nu(K_r)\geq r/M$. In particular, we obtain $\nu(T^1X/\Gamma)\geq r/M$. By setting $m=\inf_{x\in\Sigma} \tau(x)/M$, the conclusion follows.\\

Before giving the proof of the optimality of $\delta_{p,max}$, we need the following result.

\begin{proposition}\label{approximation} Let $\Gamma$ be an extended Schottky group of isometries of $X$. Let $p\in\mathcal{A}$ be a parabolic isometry. We can choose a hyperbolic isometry $h\in\Gamma$ for which the groups $\Gamma_n=<p,h^{n}>$ satisfy the following conditions:
\begin{enumerate}
\item The group $\Gamma_{n}$ is of divergence type for every $n\geq 1$,
\item The sequence $(\delta_{\Gamma_{n}})_n$ of critical exponents satisfy $\delta_{\Gamma_{n}}\rightarrow \delta_{\P}$ as $n$ goes to $\infty$,
\item The following limit holds
\begin{equation*}
\lim_{n\to\infty}\frac{n}{\sum_{\gamma\in\P} e^{-\delta_{\Gamma_n} d(x,\gamma x)}}=0.
\end{equation*}
\end{enumerate}
\end{proposition}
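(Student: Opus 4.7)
The plan is to take $h$ to be a hyperbolic generator of the extended Schottky group $\Gamma$ (so $h \ne p^{\pm 1}$), and to set $\Gamma_n = \langle p, h^n \rangle$ for each $n \geq 1$. The Schottky neighbourhoods $C_{h^{\pm n}}$ supplied by (C1) are contained in $C_{h^{\pm 1}}$, so $\Gamma_n$ is itself a two-generator Schottky group, and every nontrivial element admits a unique reduced alternating-block expression $p^{k_1} h^{n\ell_1} p^{k_2} h^{n\ell_2}\cdots$. Introduce the partial Poincar\'e series
\[
F(s) := \sum_{k \in \Z \setminus \{0\}} e^{-s\, d(o, p^k o)}, \qquad G_n(s) := \sum_{\ell \in \Z \setminus \{0\}} e^{-s\, d(o, h^{n\ell} o)}.
\]

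The main geometric input is a two-sided factorisation of the Poincar\'e series of $\Gamma_n$. Iterating Lemma \ref{lem:bus_dist_comparison} along the geodesic from $o$ to $\gamma o$ for a reduced word $\gamma$, the displacement $d(o, \gamma o)$ decomposes, up to a uniform additive error per block, as the sum of the displacements of the individual $p$- and $h^n$-blocks. This yields a constant $C_1 \geq 1$ such that whenever $F(s) G_n(s) < 1$,
\[
C_1^{-1}\, \frac{1}{1 - F(s)\, G_n(s)} \;\le\; P_{\Gamma_n}(s, o) \;\le\; C_1\, \frac{1}{1 - F(s)\, G_n(s)}.
\]
Consequently $\delta_{\Gamma_n}$ is characterised as the unique $s$ satisfying $F(s) G_n(s) = 1$, and at $s = \delta_{\Gamma_n}$ the geometric series $\sum_{L \ge 0} (F(s) G_n(s))^L$ diverges; the lower bound above then forces $P_{\Gamma_n}(\delta_{\Gamma_n}, o) = \infty$, giving (1). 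For (2), observe that $F$ is continuous and strictly decreasing on $(\delta_{\langle p\rangle}, \infty)$ with $F(s) \to \infty$ as $s \searrow \delta_{\langle p \rangle}$ (since $\langle p\rangle$ is of divergence type by (C4)), while for any fixed $s > 0$ one has $G_n(s) \to 0$ as $n \to \infty$ because $d(o, h^{n\ell} o) \to \infty$. The identity $F(\delta_{\Gamma_n}) G_n(\delta_{\Gamma_n}) = 1$ then forces $F(\delta_{\Gamma_n}) \to \infty$, hence $\delta_{\Gamma_n} \searrow \delta_{\langle p \rangle}$.

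For (3), rewrite the critical relation as $F(\delta_{\Gamma_n}) = 1/G_n(\delta_{\Gamma_n})$. Since $h$ is hyperbolic with translation length $\ell_h > 0$, standard estimates in pinched negative curvature give $d(o, h^{n\ell} o) = |n \ell|\, \ell_h + O(1)$, so the dominant terms $\ell = \pm 1$ give $G_n(\delta_{\Gamma_n}) \asymp e^{-\delta_{\Gamma_n} n \ell_h}$. Therefore
\[
\sum_{\gamma \in \langle p \rangle} e^{-\delta_{\Gamma_n} d(o, \gamma o)} \;=\; 1 + F(\delta_{\Gamma_n}) \;\asymp\; e^{\delta_{\Gamma_n} n \ell_h},
\]
which grows exponentially in $n$ since $\delta_{\Gamma_n} \to \delta_{\langle p \rangle} > 0$, yielding $n/\sum_\gamma \to 0$.

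The main obstacle is the lower bound in the Poincar\'e series factorisation. The upper bound is straightforward: iterating Lemma \ref{lem:bus_dist_comparison} at each block transition costs only an additive $O(1)$ per block, which can be absorbed into the constant $C_1$. The lower bound is more delicate, as one needs a shadowing-type argument showing that the geodesic from $o$ to $\gamma o$ closely follows the concatenation of the block geodesics, so that its length does not fall appreciably below the sum of block displacements; this is where the disjointness of the attracting/repelling cones $C_{p^{\pm 1}}, C_{h^{\pm n}}$ plays an essential role.
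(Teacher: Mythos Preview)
Your two-sided factorisation
\[
C_1^{-1}\,\frac{1}{1-F(s)G_n(s)}\;\le\;P_{\Gamma_n}(s,o)\;\le\;C_1\,\frac{1}{1-F(s)G_n(s)}
\]
is not correct as stated, and this breaks the argument for (1). The additive error from Lemma~\ref{lem:bus_dist_comparison} is incurred at \emph{every} block transition, so for a word with $j$ alternating $(p,h^n)$-blocks one only gets $d(o,\gamma o)\ge \sum(\text{block displacements})-2jC$. After exponentiating, this $O(j)$ error becomes a factor $e^{2sCj}$ and therefore enters the \emph{ratio} of the geometric series, not the overall constant: the correct upper bound is $P_{\Gamma_n}(s,o)\lesssim\sum_j\bigl(e^{2sC}F(s)G_n(s)\bigr)^j$. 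Consequently $\delta_{\Gamma_n}$ is only pinned down to the interval between the roots of $F(s)G_n(s)=1$ and $e^{2sC}F(s)G_n(s)=1$, and one cannot conclude $F(\delta_{\Gamma_n})G_n(\delta_{\Gamma_n})=1$ exactly. Your divergence argument for (1) relies precisely on this exact equality; without it the lower bound $\sum_L(F(\delta_{\Gamma_n})G_n(\delta_{\Gamma_n}))^L$ need not diverge. The paper does not attempt to extract divergence from the series comparison at all: it simply invokes \cite[Theorem A]{dalotpei}, which says that a geometrically finite group containing a divergent parabolic subgroup is itself divergent.

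You also have the easy and hard directions reversed. The \emph{lower} bound on $P_{\Gamma_n}$ follows from the triangle inequality $d(o,\gamma o)\le\sum d(o,\text{block}\cdot o)$ and needs no shadowing whatsoever; it is the \emph{upper} bound that requires the Schottky geometry via Lemma~\ref{lem:bus_dist_comparison}, and that is exactly where the per-block constant arises. No shadowing argument will remove it: in pinched negative curvature the concatenated path is only a $(1,C)$-quasi-geodesic with additive error proportional to the number of corners.

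That said, the corrected bounds still give $e^{-2\delta_{\Gamma_n}C}\le F(\delta_{\Gamma_n})G_n(\delta_{\Gamma_n})\le 1$, i.e.\ $F(\delta_{\Gamma_n})\asymp 1/G_n(\delta_{\Gamma_n})$ with constants uniform in $n$ (since $\delta_{\Gamma_n}\le\delta_\Gamma$). With this weakening your arguments for (2) and (3) go through essentially unchanged, and are in fact somewhat cleaner than the paper's route, which works only with the upper Poincar\'e bound, produces an explicit $\epsilon_k$ solving $\log(DP_{\epsilon_k})/(s_{\epsilon_k}l)=k-1$, and then compares $k/P_{\epsilon_k}$ directly. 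One further small point: you take $h$ to be a hyperbolic \emph{generator}, but the definition of extended Schottky group allows $N_1=0$. The paper covers this by choosing $h\in\Gamma$ via density of pairs of hyperbolic fixed points in $\Lambda\times\Lambda$.
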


\begin{proof}
The proof is based on that of \cite[Theorem C]{dalotpei}. Let $G$ be a group, we will use the notation $G^*$ for $G\setminus\{id\}$. Define $\P=<p>$ and take $U_\P\subset X\cup \partial X$ a connected compact neighbourhood of the fixed point $\xi_p$ of $p$ such that for every $m\in\Z^{\ast}$ we have $p^{m}(\partial X\setminus U_{\P})\subset U_{\P}$. We could  take $U_\P$ so that $U_\P\cap \partial X$ is a fundamental domain for the action of $\P$ in $\partial X\setminus \{ \xi_p \}$. Because $\Gamma$ is non-elementary and $\Lambda_\Gamma$ is not contained in $U_\P$, it is possible to choose $h\in\Gamma$ a hyperbolic isometry of $X$ such that its two fixed points $\xi_{h^-},\xi_{h}$ do not lie in $U_{\P}$. We have used the fact that pair of points fixed by a hyperbolic isometry is dense in $\Lambda\times \Lambda$. Fix $x\in X$ over the axis of $h$. Since $\Gamma$ is an extended Schottky group, for every $k \in \N$ the elements $p$ and $h^k$ are in Schottky position. In particular, for $H_k=<h^k>$ we can find a compact subset $U_{H_k} \subset X\cup \partial X$  satisfying the following three conditions
\begin{enumerate}
\item$H^*(\partial X\setminus U_{H_k})\subset U_{H_k}$.
\item $U_{H_k}\cap U_{\P}=\emptyset$.
\item  $x\not\in U_{H_k}\cup U_{\P}$.
\end{enumerate}

Since $\P$ and $U_{H_k}$ are in Schottky position it is a consequence of the Ping Pong Lemma that the group generated by them is a free product.  By the same argument to that one of Lemma \ref{lem:bus_dist_comparison}, there is a positive constant $C \in \R$ such that for every $y\in U_{H_k}$ and $z\in U_{\P}$, we have
\begin{equation} \label{eq:tri}
d(y,z)\ge d(x,y)+d(x,z)-C.
\end{equation}
 Applying inequality  \eqref{eq:tri} and the inclusion properties described above we obtain
\begin{equation} \label{eq:tria}
d(x,p^{m_1}h^{kn_1}..p^{m_j}h^{kn_j}x)\ge \sum_i d(x,p^{m_i}x)+ \sum_i d(x,h^{kn_i}x)-2kC,
\end{equation}
where $m_i\in \Z^*$. As remarked in \cite{dalotpei} the sum $$\tilde{P}(s)=\sum_{j\ge 1}\sum_{n_i,m_i\in\Z^*}\exp(-sd(x,p^{m_1}h^{kn_1}...p^{m_j}h^{kn_j}x)),$$
is comparable with the Poincar\'e series of $\Gamma_k$. Indeed, since $h$ is hyperbolic both have the same critical exponent. Using the inequality \eqref{eq:tria} we obtain
 \begin{equation*}
 \tilde{P}(s)\le \sum_{j\ge 1} \left(e^{2sC} \sum_{n\in\Z^*}e^{-sd(x,h^{kn}x)}   \sum_{m\in\Z^*}e^{-sd(x,p^m x)}\right)^j .
 \end{equation*}
Because of our  choice of $x$, if $l:=d(x,hx)$ then $d(x,h^{N}x)=|N| l$ for all $N\in\Z$. Thus,
$$\sum_{n\in\Z^*}e^{-sd(x,h^{kn}x)}\le 2\dfrac{e^{-slk}}{1-e^{-slk}}. $$
Let $s_\epsilon:=\delta_{\P}+\epsilon>\delta_{\P}$ and denoting $P_s=\sum_{m\in\Z}e^{-(\delta_\P+s) d(x,p^m x)}$, then the sum $P_\epsilon$ is finite. Assuming $\epsilon$ small, we get a constant $D$ such that
\begin{equation*}
e^{2s_{\epsilon}C} 2\dfrac{e^{-s_{\epsilon}kl}}{1-e^{-s_{\epsilon}kl}}\sum_{m\in\Z^*}e^{-s_{\epsilon}d(x,p^m x)}<De^{-s_{\epsilon}kl}P_{\epsilon}.
\end{equation*}
Hence, if $\log(DP_{\epsilon})/s_\epsilon l<k$, then $De^{-s_{\epsilon}kl}P_{\epsilon}<1$ and therefore $\delta_{\Gamma_k}\le s_\epsilon$. Observe that the function $t\mapsto \log(DP_t)/s_t $ is continuous, decreasing and unbounded in the interval $(0,\eta)$, for any $0<\eta\le1$. We can then solve the equation $\log(DP_t)/s_t l=k-1$, where $t\in (\delta_p,\delta_p+\eta)$ and $k$ is large enough. We call this solution $\epsilon_k$. By construction $\delta_{\Gamma_k}\le s_{\epsilon_k}$. It follows from the definition of $\epsilon_k$ that $\lim_{n\to \infty} P_{\epsilon_k}=\infty$. Observe that
\begin{equation*}
\dfrac{k}{\sum_{\gamma\in\P} e^{-\delta_{\Gamma_k} d(x,\gamma x)}}\le
\dfrac{k}{P_{\epsilon_k}}=\dfrac{\log (DP_{\epsilon_k})/(s_{\epsilon_k}l)+1}{P_{\epsilon_k}},
\end{equation*}
but the RHS goes to 0 as $k\to \infty$. Since $p$ is of divergence type, it follows from \cite[Theorem A]{dalotpei} that $\Gamma_n$ is of divergence type.
\end{proof}

We proceed to show an explicit family of measures satisfying the property claimed in the second part of Theorem \ref{thm:escape_mass}. We remark that the  measures constructed in Theorem \ref{thm:ent_cusp_flow_symbolic_model} can not be used at this point, since a compact set in $T^1X/\Gamma$ is not necessarily a compact set in the topology of $Y$. Hence, the fact that $\int \tau d\mu_n \rightarrow \infty$ does not imply that $\nu_n \rightharpoonup 0$. Despite this difficulty, we can use the geometry to construct the desired family.

Denote by $p$ a parabolic isometry  in the generator set $\mathcal{A}$ with maximal critical exponent, that is $\delta_{p, max}=\delta_{p}$. Take $\Gamma_n=<p,h^{n}>$ as in Proposition \ref{approximation}. Let $m^{BM}_{n}$ be the Bowen-Margulis measure on $T^{1}X/\Gamma_n$. Since an extended Schottky group is a geometrically finite group, the measure $m^{BM}_{n}$ is finite \cite[Theorem B]{dalotpei}. Moreover, it maximises the entropy of the geodesic flow on $T^{1}X/H_{n}$ \cite[Theorem 2]{op}. In other words $h_{m^{BM}_{n}}(g)$ equals $\delta_{\Gamma_n}$. Recall that the critical exponent $\delta_{\Gamma_{n}}$ converges to $\delta_{p, max}$ as $n$ goes to infinity, therefore
\begin{equation}\label{eq:dissipation_1}
h_{m^{BM}_{n}}((g_{t})) \rightarrow \delta_{p, max}.
\end{equation}

Using the coding property, we know that $T^{1}X/\Gamma_n$ (except vectors defining geo\-desics pointing to the $\Gamma_n$-orbit of the fixed points of $h$ and $p$) is identified with $Y_{n}=\{(x,t)\in \Sigma_n \times \R : 0\leq t\leq \tau(x)\}/\sim$, where
$$\Sigma_n= \{(a_{i}^{m_{i}})_{i\in
\Z}:a_{i}\in\{p,h^{n}\}, m_{i}\in\Z \},$$
and the geodesic flow is conjugated to the suspension flow on $(Y_{n},\tau)$ (same $\tau$ as before, but for this coding). It is convenient to think $(\Sigma_n,\sigma)$ as a sub-shift of $(\Sigma,\sigma)$. Since the Bowen-Margulis measure $m_n^{BM}$ is ergodic and has positive entropy it need to be supported in $Y_n$ under the corresponding identification, i.e. in the space of geodesics modeled by the suspension flow. In particular we can consider $m_n^{BM}$ as supported in some invariant subset of $Y$. Let us call $\nu^{BM}_n$ the image measure of $m_n^{BM}$ induced by the inclusion $Y_n \hookrightarrow Y$ and normalized so that $\nu^{BM}_n$ is a probability measure. Observe that \eqref{eq:dissipation_1} implies that
$$\lim_{n\to\infty} h_{\nu_n}(g)=\delta_{p, max}.$$
We just need to prove that $\nu_n^{BM}\rightharpoonup 0$ to end the proof of Theorem \ref{thm:escape_mass}. This sequence actually dissipates through the cusp associated to the parabolic element $p$. Recall that $\xi_p$ denotes the fixed point of $p$ at infinity. Define $N_{\xi_p}(s):=\{x\in X: B_{\xi_p}(o,x)>s\}$, where $o\in X$ is a reference point. Since $\Gamma$ is geometrically finite, for $s$ large enough $N_{\xi_p}(s)/<p>$ embeds isometrically into $T^1X/\Gamma$, i.e. it is a standard model for the cusp at $\xi_p$. By definition, the group $<p>$ acts co-compactly on $\Lambda_\Gamma \setminus\{\xi_p\}$. In other words, if we consider a fundamental domain for the action of $\P$ on $\Lambda_\Gamma \setminus\{\xi_p\}$, say $D$, then $\Lambda_\Gamma \bigcap D$ is relatively compact in $D$. Clearly the other fundamental domains are given by $\gamma D$ where $\gamma\in \P$.

 In \cite{dalotpei} it is proven that for any geometrically finite group $\Gamma$ the Bowen-Margulis measure in the cusp $C$ satisfies a bound of the type
\begin{equation}\label{eq:16}\dfrac{1}{A_{\Gamma,C}}\sum_{p\in\P} d(x,px)e^{-\delta_\Gamma d(x,px)}\le m^\Gamma_{BM}(T^1C)\le A_{\Gamma,C}\sum_{p\in\P} d(x,px)e^{-\delta_\Gamma d(x,px)}.\end{equation}
Here the point $x$ is chosen inside $C$ and the constant $A_{\Gamma,C}$ basically depend on the size of $C$ and the minimal distance between $\Lambda_\Gamma \bigcap D$ and $\partial D$.

Define $\Q_i= N_{\xi_p}(s_i)/<p>$, where the sequence $\{s_i\}_{i\ge 1}$ is  increasing with $\lim_i s_i=\infty$. We assume $Q_1$ provides a standard cusp neighborhood. Denote by $p_n$ the projection
$$p_n:T^1 X/\Gamma_n\to T^1X/\Gamma,$$
induced by the inclusion at the level of groups. By definition
$$\nu_n^{BM}=\dfrac{1}{m_n^{BM}(T^1X/\Gamma_n)}(p_n)_* m_n^{BM}.$$
We will prove that $\lim_{n\to\infty} \nu_n^{BM}(T^1((X/\Gamma )\setminus \Q_i))=0$ for any $i$. For this it is enough to prove the limit
$$\lim_{n\to\infty} \dfrac{m^{BM}_n(p_n^{-1}T^1((X/\Gamma)\setminus \Q_i))}{m^{BM}_n(p_n^{-1}T^1 \Q_i)}=0.$$
Observe that, if $\pi_n:X/\Gamma_n \to X/\Gamma$ is the natural projection, then the sets $\pi_n^{-1}\Q_i$ are represented by the same one in the universal covering. We denote $S_i$ this cusp neighborhood.

\begin{lemma}\label{dissipation1} The measure $m^{BM}_n(p_n^{-1}T^1((X/\Gamma)\setminus \Q_i)))$ growth at most linearly in $k_n$, that is for certain positive constant $C_i$, we have
\begin{equation}\label{eq:dissipation1}
m^{BM}_n(p_n^{-1}T^1((X/\Gamma) \setminus \Q_i))\le C_i n.
\end{equation}
\end{lemma}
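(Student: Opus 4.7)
The plan is to cover $p_n^{-1}(T^1((X/\Gamma)\setminus \Q_i))$ by $n$ translates of a bounded set $K \subset X$ and to show that each translate's Bowen--Margulis mass modulo $\Gamma_n$ stays uniformly bounded in $n$.

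Since $\Gamma_n = \langle p, h^n\rangle$ is a free product with a unique parabolic conjugacy class, the cuspidal part of $\pi_n^{-1}(\Q_i)$ is the single cusp of $X/\Gamma_n$ at the class of $\xi_p$; the remaining non-cuspidal components of $\pi_n^{-1}(\Q_i)$ are bounded sets in $X/\Gamma_n$ (coming from $\Gamma$-translates $\gamma N_{\xi_p}(s_i)$ with $\gamma \notin \Gamma_n \langle p\rangle$). Hence $p_n^{-1}(T^1((X/\Gamma)\setminus \Q_i))$ lies in $T^1 \bar K_n$, where $\bar K_n \subset X/\Gamma_n$ is a ``thick'' set avoiding the cusp at depth $s_i$, contained in a bounded neighborhood of the closed geodesic $\gamma_n$ associated to $h^n$. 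The geodesic $\gamma_n$ has length $nl$, with $l$ the translation length of $h$. Taking a bounded fundamental piece $K \subset X$ of length $l$ for $\langle h\rangle$ acting on a tubular neighborhood of the axis of $h$ (enlarged to absorb the cusp-entrance region), the images $\{h^j K\}_{j=0}^{n-1}$ descend to a cover of a tubular neighborhood of $\gamma_n$ modulo $\langle h^n\rangle \subset \Gamma_n$, because $[\langle h\rangle:\langle h^n\rangle]=n$.

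The uniform bound $m^{BM}_n(T^1(h^j K)/\Gamma_n) \le C_i$ would follow from Sullivan's formula
\begin{equation*}
dm^{BM}_n(v) = e^{\delta_{\Gamma_n}(B_{v^-}(o,\pi v) + B_{v^+}(o,\pi v))}\, d\mu_n(v^-)\, d\mu_n(v^+)\, dt
\end{equation*}
combined with Sullivan's shadow lemma for $\Gamma_n$: for a bounded set at fixed distance from the origin $o$, the Patterson--Sullivan mass $\mu_n$ of its shadow at infinity is bounded by a constant times $\exp(-\delta_{\Gamma_n} d(o,K))$. Since Proposition \ref{approximation}(2) confines $\delta_{\Gamma_n}$ to the compact interval $[\delta_p, \delta_\Gamma]$, we obtain $m^{BM}_n(T^1(h^j K)/\Gamma_n) \le C_i$ uniformly in $n$ and $j$. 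Summing over $j = 0, \ldots, n-1$ yields $m^{BM}_n(\bar K_n) \le C_i n$, the desired bound.

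The main obstacle is the uniformity-in-$n$ of the shadow-lemma constants. The measures $\mu_n$ are $\delta_{\Gamma_n}$-conformal and, as $n\to\infty$, tend to concentrate near $\xi_p$ since $\delta_{\Gamma_n} \to \delta_p$. Uniformity nevertheless holds because each $\Gamma_n$ is of divergence type (Proposition \ref{approximation}(1)) and geometrically finite, so Sullivan's classical estimates apply with constants depending only on the critical exponent (bounded above by $\delta_\Gamma$) and on the injectivity radius near $K$ (bounded below uniformly in $n$ by the Schottky separation of the generators).
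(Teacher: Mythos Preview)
Your overall strategy matches the paper's: cover the relevant set by $O(n)$ translates of a fixed bounded piece and bound the Bowen--Margulis mass of each piece uniformly in $n$. The specific decompositions differ. The paper lifts $K_i=(X/\Gamma)\setminus\Q_i$ to a set $\widetilde K_i$ inside a fundamental domain $D_0$ for $\Gamma$, tiles the fundamental domain $D_n$ of $\Gamma_n$ by $\Gamma$-translates $\gamma\widetilde K_i$, and then counts only those translates meeting the convex hull $C(\Gamma_n)$; the bound $\#\{\gamma:\gamma\widetilde K_i\cap C(\Gamma_n)\neq\emptyset\}\le Nn$ comes from convexity of the Schottky domains. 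You instead tile a tubular neighbourhood of the axis of $h$ by $\langle h\rangle$-translates and count via the index $[\langle h\rangle:\langle h^n\rangle]=n$. For the uniform mass bound on each piece the paper cites the Dal'bo--Otal--Peign\'e estimates directly, while you invoke the shadow lemma together with $\delta_{\Gamma_n}\in[\delta_p,\delta_\Gamma]$; this is a legitimate alternative, though the uniformity of the shadow-lemma constants across the family $\Gamma_n$ is not entirely automatic and you leave it at the level of a plausibility argument.

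One genuine imprecision: your sentence ``$p_n^{-1}(T^1((X/\Gamma)\setminus\Q_i))$ lies in $T^1\bar K_n$, where $\bar K_n$ is contained in a bounded neighbourhood of the closed geodesic $\gamma_n$'' is false as written --- that preimage is most of $X/\Gamma_n$. What is true, and what you actually need, is that the \emph{intersection of this preimage with the support of $m^{BM}_n$} (equivalently, with the convex core of $\Gamma_n$) is covered by your $n$ translates plus a bounded remainder handling the transit between the axis and the cusp entrance. The paper makes the restriction to $C(\Gamma_n)$ explicit; you should too, since without it the covering claim has no content.
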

\begin{proof} Let $D_0$ (resp. $D_n$) be the fundamental domain of $\Gamma$ (resp. $\Gamma_n$) containing $o\in X$. By definition of fundamental domain, there exists a set $T_n \subset \Gamma$ such that
\begin{enumerate}
\item for any $\gamma_1,\gamma_2 \in T_n$ and $\gamma_1\ne\gamma_2$, we have $\gamma_1 \text{int}(D_0) \cap \gamma_2 \text{int}(D_0) = \emptyset$, and
\item $\bigcup_{\gamma\in T_n} \gamma D_{0} = D_n$.
\end{enumerate}
Denote by $K_i$ the compact $K_i=(X/\Gamma) \setminus \Q_i$ and let $\widetilde{K}_i$ the lift of $K_i$ into $X$ intersecting $D_0$. By definition, any lift of $K_i$ into $X$ intersecting $D_n$ is a translation of $\widetilde{K}_i$ by an element in $T_n$. Since $m^{BM}_n$ is supported in $\Lambda_{\Gamma_n}$, the Bowen-Margulis measure $\tilde{m}^{BM}_n$ on $X$ satisfies
\begin{equation*}
m^{BM}_n(p_n^{-1}T^1(K_i)) \leq \sum_{\stackrel{\gamma\in T_n}{\gamma \widetilde{K}_i \cap C(\Gamma_n)\neq \emptyset}} \tilde{m}^{BM}_n(T^1(\gamma \widetilde{K}_i)),
\end{equation*}
where $C(\Gamma_n)$ is the convex hull of $L(\Gamma_n)\times L(\Gamma_n)$ in $X\cup \partial X$. By construction and convexity of the domains $C_\gamma$, there exists a positive integer $N\geq 1$ such that
$$\#\{\gamma\in T_n : \gamma \widetilde{K}_i \cap C(\Gamma_n) \neq \emptyset \} \leq N n.$$
In particular, we have
\begin{equation*}
m^{BM}_n(p_n^{-1}T^1(K_i)) \leq N n \tilde{m}^{BM}_n(T^1(\widetilde{K}_i)).
\end{equation*}
But again, by estimates given in \cite{dalotpei}, the measure $\tilde{m}^{BM}_n(T^1(\widetilde{K}_i))$ satisfies
$${m}^{BM}_n(T^1(\widetilde{K}_i))\leq L_i,$$
where $L_i$ is a constant depending on the diameter of $\widetilde{K}_i$. By setting $C_i= NL_i$, the conclusion follows.
\end{proof}

Using the comments just below equation \eqref{eq:16} we know that the constants $A_{H_n,\Q_i}$ can be all considered equal to $A_{H_1,\Q_i}$. We have then
\begin{equation}\label{eq:dissipation2}
m^{BM}_n(T^1S_i) \asymp_{A_{H_1,\Q_i}} \sum_{p\in\P} d(x,px)e^{-\delta_{\Gamma_n}d(x,px)}.
\end{equation}
Hence, from \eqref{eq:dissipation1} and \eqref{eq:dissipation2}, we get
\begin{eqnarray*}
\dfrac{m^{BM}_n(p_n^{-1}T^1(X\setminus \Q_i))}{m^{BM}_n(p_n^{-1}T^1 \Q_i)} &\leq& \frac{A_{H_1,\Q_i} C_i n}{\sum_{p\in\P} d(x,px)e^{-\delta_{\Gamma_n} d(x,px)}}\\
&\leq& \frac{C_i' n}{\sum_{p\in\P} e^{-\delta_{\Gamma_n} d(x,px)}}.
\end{eqnarray*}

Finally, property (3) in Proposition \ref{approximation} implies that the last term above converges to 0. Therefore
$$\lim_{n\to\infty} \dfrac{m^{BM}_n(p_n^{-1}T^1(X\setminus \Q_i))}{m^{BM}_n(p_n^{-1}T^1 \Q_i)}=0,$$
which concludes the proof of Theorem \ref{thm:escape_mass}.
\end{proof}

\begin{corollary} Let $X$ and $\Gamma$ as in Theorem \ref{thm:escape_mass}. Then, the entropy at infinity of the geodesic flow satisfies $$h_\infty(T^1X/\Gamma,(g_t))=\delta_{p,max}.$$
\end{corollary}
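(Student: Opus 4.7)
The corollary follows almost immediately by combining the two halves of Theorem \ref{thm:escape_mass}, so my plan is essentially to unpack the definition of $h_\infty$ and match each inequality to a clause of that theorem.

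First I would establish the upper bound $h_\infty(T^1X/\Gamma,(g_t)) \leq \delta_{p,max}$. Take any sequence $(\nu_n)$ of $(g_t)$-invariant probability measures with $\nu_n \rightharpoonup 0$ in the vague topology; I want to show $\limsup_n h_{\nu_n}(g) \leq \delta_{p,max}$. Suppose for contradiction that some subsequence (which I still call $\nu_n$) satisfies $h_{\nu_n}(g) \geq c$ for a fixed $c > \delta_{p,max}$. By the first part of Theorem \ref{thm:escape_mass}, there is a constant $m = m(c) > 0$ such that every vague limit $\nu$ of $(\nu_n)$ satisfies $\nu(T^1X/\Gamma) \geq m$. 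But by assumption $\nu_n \rightharpoonup 0$, contradicting $\nu(T^1X/\Gamma) \geq m > 0$. Hence for every $c > \delta_{p,max}$ one has $h_{\nu_n}(g) < c$ eventually, so $\limsup_n h_{\nu_n}(g) \leq \delta_{p,max}$. Taking the supremum over all such sequences yields the inequality.

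Next I would establish the lower bound $h_\infty(T^1X/\Gamma,(g_t)) \geq \delta_{p,max}$. This is exactly the "moreover" clause of Theorem \ref{thm:escape_mass}: there is a sequence $(\nu_n)$ of $(g_t)$-invariant probability measures with $\nu_n \rightharpoonup 0$ and $h_{\nu_n}(g) \to \delta_{p,max}$. Plugging this sequence into the supremum in Definition \ref{def:ei} produces a value at least $\delta_{p,max}$, so $h_\infty(T^1X/\Gamma,(g_t)) \geq \delta_{p,max}$. Combining both bounds gives equality.

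There is no real obstacle here since all substantive work was done in Theorem \ref{thm:escape_mass}; the only care needed is the trivial topological remark that $\nu_n \rightharpoonup 0$ certainly qualifies as a vague limit to which one can apply the first part of the theorem (passing to subsequences is harmless because the hypothesis $h_{\nu_n}(g) \geq c$ is inherited by subsequences). The corollary should therefore be written in two short lines.
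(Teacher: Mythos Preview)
Your proposal is correct and matches the paper's approach: the corollary is stated in the paper without proof, as an immediate consequence of the two clauses of Theorem~\ref{thm:escape_mass} exactly as you describe. The only point worth flagging is that the version of Theorem~\ref{thm:escape_mass} restated in Section~\ref{e:m} adds the word ``ergodic'' to the hypothesis on $(\nu_n)$, so if you want to be fully rigorous you could note that the upper bound for arbitrary invariant probability measures follows by passing to ergodic components (entropy is affine and a convex combination converging vaguely to $0$ forces almost every ergodic component to do so along a subsequence); the paper does not spell this out either.
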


\section{Thermodynamic Formalism} \label{s:tf}

In this section we always consider $X$ a Hadamard manifold with pinched negative sectional curvature and $\Gamma$ an extended Schottky group of isometries of $X$ with property $(\star)$. We also assume that the derivatives of the sectional curvature are uniformly bounded. Our goal is to obtain several results on thermodynamic formalism for the geodesic flow over $X/\Gamma$. Some of these results were already obtained, without symbolic methods, by Coud\`ene (see \cite{cou}) and Paulin, Pollicott and Schapira (see \cite{pps}). However, the strength of our symbolic approach will be clear in the study of regularity properties of the pressure (sub-section \ref{s_phase}).

Here we keep the notation of sub-section \ref{ss:sr}. Thus, the geodesic flow $(g_t)$ in the set $\Omega_0$ can be coded by a suspension semi-flow $(Y, \Phi)$ with base $(\Sigma, \sigma)$ and roof function $\tau:\Sigma \to \R$.

\subsection{Equilibrium measures}

We will consider the following class of  potentials.
\begin{definition}
A continuous function $f:T^1X/ \Gamma \to \R$ belongs to the class of \emph{regular} functions, that we denote by $\mathcal{R}$, if the symbolic representation $\Delta_f : \Sigma \to \R$ of $f|_{\Omega_0}$ has summable variations.
\end{definition}

We begin studying thermodynamic formalism for the geodesic flow restricted to the set $\Omega_0$. The following results can be deduced from the general theory of suspension flows over countable Markov shifts and from the symbolic model for the geodesic flow. With a slight abuse of notation, using the indentification explicited before, we still denote by $f:Y \to \R$ the given map $f:\Omega_0 \to \R$.

\begin{definition}
Let $f  \in \mathcal{R}$, then the pressure of $f$ with respect to the geodesic flow $g:=(g_t)$ restricted to the set $\Omega_0$ is defined by
\begin{equation*}
P_{\Omega_0}(f):=\lim_{t \to \infty} \frac{1}{t} \log \left(\sum_{\phi_s(x,0)=(x,0), 0<s \leq t} \exp\left( \int_0^s f(\phi_k(x,0)) ~{\rm d}k \right) \chi_{C_{i_0}}(x) \right).
\end{equation*}
\end{definition}

This pressure satisfies the following properties:

\begin{proposition}[Variational Principle] Let $f \in \mathcal{R}$, then
\begin{equation*}
P_{\Omega_0}(f)=\sup \left\{ h_{\nu}(g) +\int_{\Omega_0} f ~{\rm d} \nu : \nu\in
\mathcal{M}_{\Omega_0} \text{ and } -\int_{\Omega_0} f \, ~{\rm d}\nu <\infty \right\},
\end{equation*}
where $\mathcal{M}_{\Omega_0}$ denotes the set of $(g_t)$-invariant probability measures supported in $\Omega_0$.
\end{proposition}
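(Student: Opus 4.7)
The plan is to reduce everything to the variational principle for the suspension semi-flow $(Y, \Phi)$ stated in Section~\ref{s:tf} of the preliminaries. By Theorem~\ref{thm:geod_flow_susp_flow}, the geodesic flow $(g_t)|_{\Omega_0}$ is topologically conjugate, via a map $\Psi : \Omega_0 \to Y$, to the suspension semi-flow $\Phi$ over $(\Sigma,\sigma)$ with roof $\tau$. The conjugacy $\Psi$ induces a bijection $\Psi_* : \mathcal{M}_{\Omega_0} \to \mathcal{M}_{\Phi}$ between invariant probability measures, which preserves the measure-theoretic entropy and satisfies $\int_{\Omega_0} f \, d\nu = \int_Y (f \circ \Psi^{-1}) \, d(\Psi_*\nu)$ for every $\nu \in \mathcal{M}_{\Omega_0}$.

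First I would observe that if $f \in \mathcal{R}$, then by definition $\Delta_f$ has summable variations, so the hypotheses of Theorem~\ref{thm: flow pres} are met for the potential $\tilde{f} := f \circ \Psi^{-1}$ on $Y$. Comparing the definition of $P_{\Omega_0}(f)$ in terms of periodic orbits of the geodesic flow on $\Omega_0$ with the first line of Theorem~\ref{thm: flow pres}, the conjugacy $\Psi$ identifies periodic orbits of $(g_t)|_{\Omega_0}$ with periodic orbits of $(\Phi_t)$ on $Y$ with the same periods and with matching Birkhoff integrals. Hence
\[
P_{\Omega_0}(f) = P_{\Phi}(\tilde{f}).
\]

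Next I would invoke the Variational Principle for suspension semi-flows (stated in Section~\ref{s:tf}) to obtain
\[
P_{\Phi}(\tilde{f}) = \sup\!\left\{ h_{\tilde{\nu}}(\Phi) + \int_Y \tilde{f}\, d\tilde{\nu} : \tilde{\nu} \in \mathcal{M}_{\Phi},\ -\!\int_Y \tilde{f}\, d\tilde{\nu} < \infty \right\}.
\]
Using the bijection $\Psi_*$ together with the fact that $\Psi$ preserves entropy (entropy is a conjugacy invariant) and that $\int_Y \tilde{f}\, d(\Psi_*\nu) = \int_{\Omega_0} f\, d\nu$, each $\tilde{\nu} \in \mathcal{M}_{\Phi}$ corresponds to a unique $\nu = \Psi_*^{-1}\tilde{\nu} \in \mathcal{M}_{\Omega_0}$ with $h_{\tilde{\nu}}(\Phi) = h_{\nu}(g)$. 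Rewriting the supremum on the right-hand side in terms of $\nu \in \mathcal{M}_{\Omega_0}$ yields the desired formula.

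The only subtle point, and the step where I would be most careful, is the identification $P_{\Omega_0}(f) = P_{\Phi}(\tilde{f})$ at the level of the Gurevich-type definitions: one has to check that the time parameter and the contribution of the roof function $\tau$ match under $\Psi$ (a periodic orbit of $\Phi$ with combinatorial period $n$ has actual period $\tau_n(x) = \sum_{i=0}^{n-1}\tau(\sigma^i x)$, and its Birkhoff integral of $\tilde{f}$ equals $\sum_{i=0}^{n-1}\Delta_f(\sigma^i x)$, which corresponds under $\Psi^{-1}$ exactly to the integral of $f$ along the corresponding closed geodesic). Once this bookkeeping is done, the rest of the argument is a direct transfer of the symbolic variational principle through the conjugacy.
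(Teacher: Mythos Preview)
Your proposal is correct and follows exactly the approach the paper intends: the paper does not spell out a proof for this proposition but simply remarks that it ``can be deduced from the general theory of suspension flows over countable Markov shifts and from the symbolic model for the geodesic flow,'' which is precisely the reduction via the conjugacy $\Psi$ of Theorem~\ref{thm:geod_flow_susp_flow} to the Variational Principle for $(Y,\Phi)$ that you carry out. Your bookkeeping on periodic orbits, entropy, and integrals under $\Psi$ is the expected justification.
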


\begin{proposition}
Let $f \in \mathcal{R}$, then
\begin{eqnarray*}
P_{\Omega_0}(f) = \sup \{ P_{g|K}(f) : K\in \cK_{\Omega_0}(g) \},
\end{eqnarray*}
where $\cK_{\Omega_0}(g)$ denotes the space of compact $g-$invariant sets in $\Omega_0$.
\end{proposition}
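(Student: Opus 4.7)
The plan is to reduce the statement to its symbolic analogue, already packaged in Theorem \ref{thm: flow pres}, via the coding of the geodesic flow established in Theorem \ref{thm:geod_flow_susp_flow}.

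First I would set up the translation. By Theorem \ref{thm:geod_flow_susp_flow} there is a topological conjugacy $\Psi : \Omega_0 \to Y$ intertwining $(g_t)|_{\Omega_0}$ with the suspension semi-flow $(Y,\Phi)$ over the topologically mixing BIP countable Markov shift $(\Sigma,\sigma)$ with locally H\"older roof function $\tau$ bounded away from zero. Since $f \in \mathcal{R}$ by hypothesis, the function $\Delta_f$ associated to $f \circ \Psi^{-1}$ has summable variations, so Theorem \ref{thm: flow pres} applies to $f$ viewed on $Y$. In particular,
\begin{equation*}
P_\Phi(f) \;=\; \lim_{t\to\infty}\frac{1}{t}\log\!\left(\sum_{\varphi_s(x,0)=(x,0),\,0<s\le t}\!\!\exp\!\left(\int_0^s f(\varphi_k(x,0))\,dk\right)\chi_{C_{i_0}}(x)\right),
\end{equation*}
which, transported back through $\Psi$, is exactly the defining expression of $P_{\Omega_0}(f)$ (periodic orbits of $\Phi$ correspond bijectively to periodic orbits of $(g_t)|_{\Omega_0}$, with matching periods and Birkhoff integrals, because $\Psi$ is a flow conjugacy). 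Hence $P_{\Omega_0}(f) = P_\Phi(f)$.

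Next I would transfer the approximation property itself. Theorem \ref{thm: flow pres} gives
\begin{equation*}
P_\Phi(f) \;=\; \sup\{P_{\Phi|K}(f) : K \in \cK(\Phi)\}.
\end{equation*}
The conjugacy $\Psi$ is a homeomorphism, so it induces a bijection $K \mapsto \Psi^{-1}(K)$ between $\cK(\Phi)$ and $\cK_{\Omega_0}(g)$. The classical topological pressure on a compact invariant set is a conjugacy invariant (both variational and spanning/separated-set definitions are preserved by flow conjugacies), so $P_{\Phi|K}(f) = P_{g|\Psi^{-1}(K)}(f)$ for every $K \in \cK(\Phi)$. Combining the two equalities yields
\begin{equation*}
P_{\Omega_0}(f) \;=\; P_\Phi(f) \;=\; \sup\{P_{\Phi|K}(f) : K \in \cK(\Phi)\} \;=\; \sup\{P_{g|K'}(f) : K' \in \cK_{\Omega_0}(g)\},
\end{equation*}
which is the desired identity.

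The only real subtlety, and what I would check carefully, is that $\Psi$ is indeed a homeomorphism (not merely a measurable bijection), so that the correspondence $K \leftrightarrow \Psi^{-1}(K)$ actually lands in compact invariant sets on both sides. This is exactly assertion (2) of Theorem \ref{thm:geod_flow_susp_flow}, interpreted with the natural topologies on $\Omega_0 \subset T^1X/\Gamma$ and on $Y = \Sigma \times \R / \sim$ (with $\Sigma$ carrying the cylinder topology); no further argument beyond the one already invoked in Theorem \ref{thm:geod_flow_susp_flow} is needed. No separate work is required for the Variational Principle or for pressure on non-compact invariant sets, since the compact-approximation property transfers verbatim through the conjugacy.
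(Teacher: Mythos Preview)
Your proposal is correct and is exactly the argument the paper has in mind: the paper does not write out a separate proof of this proposition but simply states that it, together with the Variational Principle, ``can be deduced from the general theory of suspension flows over countable Markov shifts and from the symbolic model for the geodesic flow,'' i.e.\ from Theorem~\ref{thm: flow pres} transported through the conjugacy of Theorem~\ref{thm:geod_flow_susp_flow}. Your write-up makes this explicit, including the only nontrivial point (that the conjugacy is a homeomorphism so compact invariant sets correspond bijectively), which is precisely what ``topologically conjugated'' in Theorem~\ref{thm:geod_flow_susp_flow}(2) guarantees.
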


\begin{remark}[Convexity]
It is well known that for any $ K\in \cK_{\Omega_0}(g)$ the pressure function $ P_{g|K}(\cdot)$ is convex.  Since the supremum of convex functions is a convex function, it readily follows that $P_{\Omega_0}(\cdot)$ is convex.
\end{remark}

\begin{proposition}\label{prop_es}
Let $f \in \mathcal{R}$. Then there is an equilibrium measure $\nu_f \in \M_{\Omega_0}$, that is,
\begin{equation*}
P_{\Omega_0}(f)= h_{\nu_f}(g) +\int_{\Omega_0} f ~{\rm d} \nu_f,
\end{equation*}
for $f$ if  and only if
we have that $P_\sigma(\Delta_f -P_{\Phi}(f) \tau)=0$ and there exists
 an equilibrium measure $\mu_f\in \M_\sigma$ for $\Delta_f -P_{\Phi}(f) \tau$ such that $\int \tau  d\mu_f < \infty$.
 Moreover, if such an equilibrium measure exists then it is unique.
\end{proposition}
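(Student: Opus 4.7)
The plan is to transfer Theorem \ref{thm_es} from the symbolic suspension flow $(Y,\Phi)$ to the geodesic flow on $\Omega_0$ via the topological conjugacy provided by Theorem \ref{thm:geod_flow_susp_flow}. The class $\mathcal{R}$ was precisely defined so that the symbolic representation $\Delta_f$ has summable variations, which is the exact regularity hypothesis required for the results of Section \ref{s:tf} on suspension flows.

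First, I would record the identification of ergodic-theoretic data. Denoting by $\Psi: \Omega_0 \to Y$ the topological conjugacy, the pushforward map $\nu \mapsto \Psi_*\nu$ is a bijection between $\M_{\Omega_0}$ and $\M_{\Phi}$, and preserves entropy. By the Ambrose--Kakutani description (subsection \ref{ss:measure}), every $\tilde\nu \in \M_\Phi$ has the form $\tilde\nu = (\mu \times m)|_Y / (\mu\times m)(Y)$ for a unique $\mu \in \M_\sigma$ with $\int \tau\, d\mu < \infty$. Combining Abramov's formula (Proposition \ref{prop:as}) and Kac's Lemma (Proposition \ref{prop:kac}) gives
\begin{equation*}
h_\nu(g) + \int_{\Omega_0} f\, d\nu
= \frac{1}{\int \tau\, d\mu}\left( h_\mu(\sigma) + \int (\Delta_f - P_{\Omega_0}(f)\tau)\, d\mu \right) + P_{\Omega_0}(f),
\end{equation*}
for any $\nu \in \M_{\Omega_0}$ represented by $\mu$. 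Taking suprema and using the definition of $P_{\Omega_0}$ identifies $P_{\Omega_0}(f) = P_{\Phi}(f)$, and moreover $\nu$ is an equilibrium measure for $f$ at the level of the flow if and only if $\mu$ is an equilibrium measure for $\Delta_f - P_{\Omega_0}(f)\tau$ at the level of the base, with the additional constraint $\int \tau\, d\mu < \infty$.

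With this dictionary in place, the equivalence in the proposition follows directly from Theorem \ref{thm_es} applied to $(Y,\Phi)$: the existence of an equilibrium measure for $f$ in $\M_{\Omega_0}$ is equivalent to $P_\sigma(\Delta_f - P_{\Omega_0}(f)\tau) = 0$ together with the existence of an equilibrium measure $\mu_f \in \M_\sigma$ for $\Delta_f - P_{\Omega_0}(f)\tau$ satisfying $\int \tau\, d\mu_f < \infty$.

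For uniqueness, the potential $\Delta_f - P_{\Omega_0}(f)\tau$ has summable variations (since both $\Delta_f$ and $\tau$ do, by the definition of $\mathcal{R}$ and Lemma \ref{lh}), so by \cite[Theorem 1.1]{busa} it admits at most one equilibrium measure in $\M_\sigma$. Pulling back through the bijection $\mu \leftrightarrow \nu$ gives at most one equilibrium measure for $f$ in $\M_{\Omega_0}$. The only point that requires mild care is checking that the summability of $\Delta_f - P_{\Omega_0}(f)\tau$ is inherited from summability of $\Delta_f$ and $\tau$, and that the representation $\Delta_f(x)=\int_0^{\tau(x)} f(x,t)\, dt$ is well defined under the conjugacy; both are routine given Lemma \ref{lh} and the continuity of $f$, so I expect no serious obstacle, and the argument is essentially a bookkeeping translation of Theorem \ref{thm_es} through $\Psi$.
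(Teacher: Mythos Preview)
Your proposal is correct and follows exactly the approach the paper intends: the paper does not give a separate proof of this proposition, stating only that it ``can be deduced from the general theory of suspension flows over countable Markov shifts and from the symbolic model for the geodesic flow,'' i.e.\ by transporting Theorem~\ref{thm_es} through the conjugacy of Theorem~\ref{thm:geod_flow_susp_flow}. Your explicit dictionary via Abramov--Kac and the uniqueness argument through \cite[Theorem~1.1]{busa} at the base level is precisely the intended bookkeeping.
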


In order to extend these results to the geodesic flow in $T^1X/ \Gamma$ we use the second conclusion of Proposition \ref{prop:me}.

\begin{definition} \label{def:press}
Let $f \in \mathcal{R}$, then the pressure of $f$ with respect to the geodesic flow $g:=(g_t)$ in $T^1X/ \Gamma$ is defined by
\begin{equation*}
P_g(f):=\max \left\{ P_{\Omega_0}(f) , \int f ~{\rm d} \nu^{h_1}, \dots , \int f ~{\rm d} \nu^{h_{N_1}}   \right\}.
\end{equation*}
\end{definition}

\begin{proposition}[Variational Principle] Let $f \in \mathcal{R}$, then
\begin{equation*}
P_g(f)= \sup \left\{ h_{\nu}(g) +\int f ~{\rm d} \nu : \nu\in
\mathcal{M}_{g} \text{ and } -\int f \, ~{\rm d}\nu <\infty \right\},
\end{equation*}
\end{proposition}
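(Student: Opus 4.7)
The strategy is to reduce the variational principle on all of $T^1X/\Gamma$ to the already-established variational principle on $\Omega_0$, absorbing the finitely many extra ergodic measures coming from Proposition~\ref{prop:me} as a harmless additive contribution. The key structural input is that by Proposition~\ref{prop:me}, every ergodic $(g_t)$-invariant probability measure on $T^1X/\Gamma$ either lies in $\mathcal{M}_{\Omega_0}$ or is one of the atomic measures $\nu^{h_1},\dots,\nu^{h_{N_1}}$, and each $\nu^{h_i}$ has zero entropy.

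For the lower bound, I would test the supremum on the right-hand side against two families of measures. First, taking $\nu \in \mathcal{M}_{\Omega_0}$ with $-\int f\, d\nu<\infty$ and applying the variational principle for $P_{\Omega_0}(f)$ already established, one gets
\[
\sup\Bigl\{h_\nu(g)+\textstyle\int f\,d\nu:\nu\in\mathcal{M}_g,\;-\int f\,d\nu<\infty\Bigr\}\;\ge\; P_{\Omega_0}(f).
\]
Second, for each $i=1,\dots,N_1$, the measure $\nu^{h_i}$ is supported on a closed geodesic (hence $-\int f\, d\nu^{h_i}$ is finite since $f$ is continuous) and has $h_{\nu^{h_i}}(g)=0$, so testing against $\nu=\nu^{h_i}$ shows the supremum is at least $\int f\, d\nu^{h_i}$. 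Taking the maximum of these $N_1+1$ lower bounds gives $P_g(f)$ by Definition~\ref{def:press}.

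For the upper bound, I would use the ergodic decomposition. Given any $\nu\in\mathcal{M}_g$ with $-\int f\,d\nu<\infty$, write $\nu=\int \nu_\omega\,d\nu(\omega)$ with $\nu_\omega$ ergodic. By Proposition~\ref{prop:me} one can split
\[
\nu=t\,\nu'+(1-t)\,\nu'',
\]
where $\nu'$ is (a probability measure) supported on $\Omega_0$ and $\nu''$ is a convex combination of the $\nu^{h_i}$'s, and $t\in[0,1]$. The affinity of entropy gives $h_\nu(g)=t\,h_{\nu'}(g)+(1-t)\cdot 0 = t\,h_{\nu'}(g)$, and linearity of the integral gives $\int f\,d\nu=t\int f\,d\nu'+(1-t)\int f\,d\nu''$. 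If $t>0$ then $\nu'\in\mathcal{M}_{\Omega_0}$ and $-\int f\,d\nu'<\infty$ (this is the small point to check: it follows because $-\int f\,d\nu<\infty$ together with the fact that $-\int f\,d\nu''$ is finite, the latter because $\nu''$ is supported on the compact union of $N_1$ closed geodesics). Applying the variational principle for $P_{\Omega_0}$ to $\nu'$ and bounding $\int f\,d\nu''\le \max_i\int f\,d\nu^{h_i}$ yields
\[
h_\nu(g)+\textstyle\int f\,d\nu\;\le\;t\,P_{\Omega_0}(f)+(1-t)\max_i \int f\,d\nu^{h_i}\;\le\;P_g(f),
\]
and the case $t=0$ is immediate. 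Taking the supremum over $\nu$ gives the reverse inequality.

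The only subtle point, and therefore the one I would write out carefully, is the ergodic-decomposition step: justifying that the integrability condition $-\int f\,d\nu<\infty$ descends to the component $\nu'\in\mathcal{M}_{\Omega_0}$ so that it is an admissible test measure for $P_{\Omega_0}(f)$, and verifying that the affine entropy formula $h_\nu(g)=\int h_{\nu_\omega}(g)\,d\nu(\omega)$ applies in this non-compact setting. Once this is in place, the rest is a direct two-sided comparison with $P_{\Omega_0}(f)$ and the finite collection $\{\int f\,d\nu^{h_i}\}$.
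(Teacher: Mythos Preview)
Your approach is correct and is precisely what the paper intends. The paper does not actually write out a proof of this proposition: it is stated immediately after Definition~\ref{def:press}, with the preceding sentence ``In order to extend these results to the geodesic flow in $T^1X/\Gamma$ we use the second conclusion of Proposition~\ref{prop:me}'' indicating that the result follows from the variational principle on $\Omega_0$ together with the fact that the finitely many ergodic measures outside $\Omega_0$ are the zero-entropy periodic measures $\nu^{h_i}$. Your two-sided comparison via ergodic decomposition and affinity of entropy is exactly the natural way to make this explicit, and the subtleties you flag (integrability descending to $\nu'$, affinity of entropy) are the right ones to check.
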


\begin{proposition}
Let $f \in \mathcal{R}$, then
\begin{eqnarray*}
P_{g}(f) = \sup \{ P_{g|K}(f) : K\in \cK(g) \},
\end{eqnarray*}
where $\cK(g)$ denotes the space of compact $g-$invariant sets.
\end{proposition}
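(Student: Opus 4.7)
The plan is to establish the two inequalities separately, leveraging the variational principle for $P_g$ already recorded and the analogous approximation result for $P_{\Omega_0}$ stated just above.

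First, for the direction $\sup_{K\in \cK(g)} P_{g|K}(f) \leq P_g(f)$, I would observe that for each compact $g$-invariant set $K\subset T^1X/\Gamma$ the classical variational principle on $K$ expresses $P_{g|K}(f)$ as the supremum of $h_\nu(g)+\int f\, d\nu$ over $g$-invariant probability measures supported on $K$. Each such $\nu$ belongs to $\mathcal{M}_{g}$, so by the variational principle for $P_g$ it is bounded above by $P_g(f)$. Taking the supremum over $K$ gives the desired inequality.

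For the reverse inequality $P_g(f) \leq \sup_{K\in\cK(g)} P_{g|K}(f)$, I would unpack the maximum in Definition \ref{def:press} term by term. The first term is handled by the preceding proposition, which gives $P_{\Omega_0}(f)=\sup\{P_{g|K}(f):K\in\cK_{\Omega_0}(g)\}$; since any compact $g$-invariant subset of $\Omega_0$ is in particular a compact $g$-invariant subset of $T^1X/\Gamma$, we have $\cK_{\Omega_0}(g)\subset \cK(g)$ and this term is bounded by the right-hand side. For each remaining term $\int f\, d\nu^{h_i}$, I would note that $\nu^{h_i}$ is supported on the closed geodesic $\gamma_i\subset T^1X/\Gamma$ obtained by projecting the axis of $h_i$, which is a compact $g$-invariant set. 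Since $h_{\nu^{h_i}}(g)=0$ by Proposition \ref{prop:me}, the variational principle applied on $\gamma_i$ yields $P_{g|\gamma_i}(f)\geq h_{\nu^{h_i}}(g)+\int f\, d\nu^{h_i}=\int f\, d\nu^{h_i}$. Combining these observations, each term in the maximum defining $P_g(f)$ is dominated by $\sup_{K\in\cK(g)} P_{g|K}(f)$, and hence so is the maximum itself.

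The proof is essentially bookkeeping; there is no genuinely hard step once the prior results are in place. The only subtle ingredient is Proposition \ref{prop:me}, which guarantees that the ergodic measures lying in $\mathcal{M}_g\setminus \mathcal{M}_{\Omega_0}$ are exactly the periodic measures $\nu^{h_i}$, each of which is automatically supported on a compact invariant set. This is what allows us to promote the compact approximation from $\Omega_0$ (where we have a symbolic model) to all of $T^1X/\Gamma$: the non-symbolic portion of the flow contributes only finitely many zero-entropy periodic measures, whose integrals $\int f\, d\nu^{h_i}$ are individually realized as pressures on a compact periodic orbit.
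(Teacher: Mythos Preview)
Your proof is correct and is precisely the bookkeeping argument the paper has in mind: the proposition is stated without proof because it follows immediately from Definition~\ref{def:press}, the preceding approximation property for $P_{\Omega_0}$, and Proposition~\ref{prop:me}, exactly as you spell out.
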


\begin{proposition}
Let $f \in \mathcal{R}$ be such that $\sup f < P_g(f)$ then  there is an equilibrium measure $\nu_f \in \M_{g}$, for $f$ if  and only if
we have that $P_{\Omega_0}(\Delta_f -P_{\Phi}(f) \tau)=0$ and there exists
 an equilibrium measure $\mu_f\in \M_\sigma$ for $\Delta_f -P_{\Phi}(f) \tau$ such that $\int \tau  d\mu_f < \infty$.
 Moreover, if such an equilibrium measure exists then it is unique.
\end{proposition}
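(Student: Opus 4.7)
The plan is to reduce the statement to Proposition \ref{prop_es} by exploiting the gap hypothesis $\sup f < P_g(f)$. Two preliminary observations are crucial: first, under this hypothesis $P_g(f)$ coincides with $P_{\Omega_0}(f)=P_\Phi(f)$; second, any equilibrium measure for $f$ on $T^1X/\Gamma$ must be concentrated in $\Omega_0$.

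For the first point, by Definition \ref{def:press} we have $P_g(f)=\max\{P_{\Omega_0}(f),\int f\, d\nu^{h_1},\dots,\int f\,d\nu^{h_{N_1}}\}$. Since each $\nu^{h_i}$ is a probability measure, $\int f\, d\nu^{h_i}\le \sup f < P_g(f)$, so the maximum is forced to equal $P_{\Omega_0}(f)$. The topological conjugation between $(g_t)|_{\Omega_0}$ and the suspension flow $(Y,\Phi)$ from Theorem \ref{thm:geod_flow_susp_flow} then gives $P_g(f)=P_{\Omega_0}(f)=P_\Phi(f)$. For the second point, let $\nu_f\in\M_g$ be an equilibrium measure and consider its ergodic decomposition $\nu_f=\int \nu_\omega\, d\mathbb{P}(\omega)$. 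By Proposition \ref{prop:me}, each ergodic component is either in $\M_{\Omega_0}$ or equals some $\nu^{h_i}$ with zero entropy. By affinity of entropy on the ergodic decomposition,
\begin{equation*}
P_g(f) = h_{\nu_f}(g)+\int f\, d\nu_f = \int\bigl(h_{\nu_\omega}(g)+\textstyle\int f\, d\nu_\omega\bigr)\,d\mathbb{P}(\omega).
\end{equation*}
Components equal to some $\nu^{h_i}$ contribute at most $\sup f<P_g(f)$, while components in $\M_{\Omega_0}$ contribute at most $P_{\Omega_0}(f)=P_g(f)$ by the Variational Principle. For the average to equal $P_g(f)$, the set of $\omega$ for which $\nu_\omega\in\{\nu^{h_1},\dots,\nu^{h_{N_1}}\}$ must have $\mathbb{P}$-measure zero, so $\nu_f\in\M_{\Omega_0}$ and is an equilibrium measure for $f$ on $\Omega_0$.

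With these two points established, both directions of the equivalence follow by applying Proposition \ref{prop_es} to the restriction of $f$ to $\Omega_0$: an equilibrium measure $\nu_f\in\M_{\Omega_0}$ for $f$ exists if and only if $P_\sigma(\Delta_f-P_\Phi(f)\tau)=0$ and there exists an equilibrium measure $\mu_f\in\M_\sigma$ for $\Delta_f-P_\Phi(f)\tau$ with $\int\tau\,d\mu_f<\infty$. Uniqueness in $\M_g$ likewise descends from uniqueness in $\M_{\Omega_0}$ given by Proposition \ref{prop_es}. Conversely, if the symbolic conditions hold, Proposition \ref{prop_es} produces an equilibrium measure on $\Omega_0$, which is automatically an equilibrium measure on $T^1X/\Gamma$ since $P_g(f)=P_{\Omega_0}(f)$. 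The only delicate step is the ergodic decomposition argument showing that no mass escapes to the periodic orbits $\nu^{h_i}$, but this is essentially automatic once Proposition \ref{prop:me} is invoked to identify $\M_g\setminus \M_{\Omega_0}$ with the finite family of zero-entropy orbital measures.
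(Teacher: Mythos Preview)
Your proof is correct and follows essentially the same route as the paper: both reduce to Proposition~\ref{prop_es} by showing that under the hypothesis $\sup f<P_g(f)$ any equilibrium measure must lie in $\M_{\Omega_0}$. The paper's version is much terser (it simply notes that an equilibrium measure must have positive entropy while the $\nu^{h_i}$ do not), whereas you supply the ergodic decomposition argument that makes this reduction fully rigorous and also spell out why $P_g(f)=P_{\Omega_0}(f)$.
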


\begin{proof}
Note that if $\sup f < P_g(f)$ then an equilibrium measure for $f$, if it exists, must have positive entropy. Since the measures $\nu^{h_i}$, with $i \in \{1, \dots , N_1\}$ have zero entropy (see Proposition \ref{prop:me}). The result follows from Proposition \ref{prop_es}.
 \end{proof}

The next result shows that potentials with small oscillation do have equilibrium measures, this result can also be deduced from  \cite{cou,pps}. Our proof is short and uses the symbolic structure.

\begin{theorem} \label{t:osc}
Let $f \in \mathcal{R}$. If
\begin{equation*}
\sup f - \inf f < h_{top}(g) - \delta_{p, max}
\end{equation*}
then $f$ has an equilibrium measure.
\end{theorem}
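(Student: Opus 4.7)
The plan is to combine a lower bound on $P_g(f)$ with symbolic equilibrium theory, bridged by Proposition \ref{prop_es}. First I would show that $P_g(f) > \sup f + \delta_{p,max}$. By the variational principle, $P_g(f) \geq h_\nu(g) + \int f\,d\nu \geq h_\nu(g) + \inf f$ for any $\nu \in \M_g$; taking $\nu$ with $h_\nu(g) \to h_{top}(g)$ yields $P_g(f) \geq h_{top}(g) + \inf f$, and the hypothesis $\sup f - \inf f < h_{top}(g) - \delta_{p,max}$ then gives the strict inequality. In particular $\sup f < P_g(f)$, so any candidate equilibrium measure must have positive entropy, and therefore must lie in $\M_{\Omega_0}$ by Proposition \ref{prop:me}.

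Passing to the symbolic model via Theorem \ref{thm:geod_flow_susp_flow}, I would consider the potential $\psi := \Delta_f - P_g(f)\tau$ on $\Sigma$. Since $\Delta_f \leq (\sup f)\tau$ pointwise and $c := P_g(f) - \sup f > \delta_{p,max} = s_\infty$ by Theorem \ref{s_inftyshottky}, one has $\psi \leq -c\tau$, whence $P_\sigma(\psi) \leq P_\sigma(-c\tau) < \infty$ by monotonicity of pressure. Thus $s \mapsto P_\sigma(\Delta_f - s\tau)$ is finite and convex on a neighborhood of $s = P_g(f)$, hence continuous there. Combined with the characterization $P_g(f) = \inf\{s : P_\sigma(\Delta_f - s\tau) \leq 0\}$ from Theorem \ref{thm: flow pres}, this forces $P_\sigma(\psi) = 0$.

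The remaining step is to produce an equilibrium measure $\mu \in \M_\sigma$ for $\psi$, after which Proposition \ref{prop_es} delivers the desired $\nu_f \in \M_{\Omega_0} \subset \M_g$. Since $(\Sigma,\sigma)$ is BIP (Lemma \ref{lema:mix}), $\psi$ is locally H\"older with $P_\sigma(\psi) < \infty$ and $\sup \psi < 0$, Sarig's theory (in the spirit of Theorem \ref{bip}, applied to the one-parameter family $\Delta_f - s\tau$) yields such a $\mu$. Finiteness of $\int \tau\,d\mu$ is then automatic: the variational principle is restricted to measures with $\int(-\psi)\,d\mu < \infty$, and the pointwise bound $-\psi \geq c\tau$ forces $\int \tau\,d\mu < \infty$. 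The main technical point is producing $\mu$: Theorem \ref{bip} is phrased for one-parameter families $t\phi$, so invoking it for a single potential of finite pressure requires either citing Sarig--Mauldin--Urba\'nski directly, or parametrizing $\Delta_f - s\tau$ and reading off the equilibrium measure at $s = P_g(f)$ from analytic continuation through the finite regime.
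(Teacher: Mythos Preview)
Your proof is correct and follows essentially the same approach as the paper: both reduce to the symbolic model, use the sandwich bounds $(\inf f)\tau \leq \Delta_f \leq (\sup f)\tau$ together with the hypothesis to show $P_\sigma(\Delta_f - P_g(f)\tau) = 0$, and then invoke BIP to produce the equilibrium measure on the base. Your integrability argument (bounding $-\psi \geq c\tau$ directly) is in fact a bit more elementary than the paper's, which instead observes that $s \mapsto P_\sigma(\Delta_f - s\tau)$ is finite on an interval around $P_g(f)$, hence real analytic there, so its derivative $-\int \tau\,d\mu$ is finite; the technical point you flag about producing $\mu$ is glossed over in the paper as well.
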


\begin{proof}
Assume that the measures $\nu^{h_i}$ are not equilibrium measures for $f$, otherwise the theorem is proved.  Therefore, we have that
$P_g(f)=P_{\Omega_0}(f)$. We  first show  that $P_\sigma(\Delta_f -P_g(f) \tau)=0$. Note that for every $x \in \Sigma$,
\begin{equation*}
 \tau(x)  \inf f  \leq \Delta_f(x) \leq  \tau(x)  \sup f.
\end{equation*}
By monotonicity of the pressure we obtain
\begin{equation*}
P_\sigma((\inf f -t )\tau) \leq   P_\sigma(\Delta_f -t \tau) \leq P_\sigma((\sup f -t )\tau).
\end{equation*}
Let $t \in (s_{\infty} + \sup f, h_{top}(g) + \inf f)$ and recall that $s_{\infty} = \delta_{p, max}$. Then
\begin{equation*}
0<P_\sigma((\inf f -t)\tau) \leq   P_\sigma(\Delta_f -t \tau) \leq P_\sigma((\sup f -t )\tau)< \infty.
\end{equation*}
Since $P_g(f)<\infty$ and the function $t \to  P_\sigma(\Delta_f -t \tau)$ is continuous with $\lim_{t\to\infty}  P_\sigma(\Delta_f -t \tau)=-\infty$,  we obtain that $P_\sigma(\Delta_f -P_g(f) \tau)=0$.
Since the system $\Sigma$ has the BIP condition and the potential $\Delta_f -P_g(f) \tau$ is of summable variations, it has an equilibrium measure $\mu$.
It remains to prove the integrability condition. Recall that
\[\frac{\partial}{\partial t} P_\sigma(\Delta_f -t \tau) \Big|_{t=P_g(f)} = -\int\tau d \mu.\]
But we have proved that the function $t \to P_\sigma(\Delta_f -t \tau)$ is finite (at least) in an interval of the form $[P_g(f)- \epsilon, P_g(f) +\epsilon]$. The result now follows, because when finite the function is real analytic.
\end{proof}

\subsection{Phase transitions} \label{s_phase}
This sub-section is devoted to study the regularity properties of pressure functions $t \mapsto P_g(tf)$ for a certain class of functions $f$. We say that the pressure function $t \mapsto P_g(tf)$ has a \emph{phase transition} at $t=t_0$ if the pressure function is not real analytic at $t=t_0$. The set of points at which the pressure function exhibits phase transitions might be a very large set. However, since the pressure is a convex function  it can only have a countable set of points where it is not differentiable. Regularity properties of the pressure are related to important dynamical properties, for example exponential decay of correlations of equilibrium measures. In the Axiom A case the pressure is real analytic. Indeed, this can be proved noting that, in that setting,  the function $t \mapsto P_\sigma(\Delta_f -t \tau)$ is real analytic and that
$P_\sigma(\Delta_f -P_{\Phi} \tau)=0$. The result then follows from the implicit function theorem noticing that the non-degeneracy condition is fulfilled:
\begin{equation*}
\frac{\partial}{\partial t}  P_{\sigma}(\Delta_f -t \tau) = -\int \tau d \mu <0,
\end{equation*}
where $\mu$ is the equilibrium measure corresponding to $\Delta_f -t \tau$. The inequality above, together with the coding properties established in \cite{bo1, br, ra}, allow us to establish that the pressure is real analytic for regular potentials in the Axiom A setting. In the non-compact case the situation can be different. However, the only results involving the regularity properties of the pressure function for geodesic flows defined on non-compact  manifolds, that we are aware of, are those concerning the modular surface (see \cite[Section 6]{ij}). In this section we establish regularity results for pressure functions of geodesic flows defined on extended Schottky groups. We begin by defining conditions (F1) and (F2) on the potentials.

\begin{definition}
Consider a non-negative continuous function $f:T^1X/ \Gamma \to \R$. We will say $f$ satisfies Condition (F1) or (F2) if the corresponding property below holds.

\begin{enumerate}
\item[(F1)] The symbolic representation $\Delta_f : \Sigma^+ \to \R$ is locally H\"older and bounded away from zero in every cylinder $C_{a^m}\subset\Sigma^+$, where $a\in\A$, $m\in \Z$.
\item[(F2)] Consider any indexation $(C_n)_{n\in\N}$ of the cylinders of the form $C_{a^m}$. Then,
\begin{equation*}
\lim_{n\to \infty} \dfrac{\sup \{\Delta_f(x):x\in C_n\}}{\inf \{\tau(x):x\in C_n\}}=0.
\end{equation*}
\end{enumerate}
We say $f$ belong to the class $\F$ if it satisfies (F1) and (F2).
\end{definition}

In the following Lemma we establish two properties of potentials in $\F$ that will be used in the sequel.
\begin{lemma} \label{p:tau}
Let $f$ be a potential satisfying (F1) and $(\nu_n)$ a sequence of measures in $\M_{\Omega_0}$ such that $\nu_n= \frac{\mu_n \times m|_{Y}}{(\mu_n \times m)(Y)}$. Then,
\begin{enumerate}
  \item[(1)] if  $\lim_{n \to \infty} \int_{\Omega_0} f d\nu_n =0$, then $\lim_{n \to \infty} \int \tau d \mu_n = \infty$.
  \item[(2)] if $f$ satisfies Condition (F2) and $\lim_{n \to \infty} \int \tau d \mu_n = \infty$, then $\lim_{n \to \infty} \int_{\Omega_0} f d\nu_n =0$.
\end{enumerate}
\end{lemma}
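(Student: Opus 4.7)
The engine for both implications is Kac's lemma (Proposition~\ref{prop:kac}), which gives
\begin{equation*}
\int_{\Omega_0} f \, d\nu_n \;=\; \frac{\int_{\Sigma} \Delta_f \, d\mu_n}{\int_{\Sigma} \tau \, d\mu_n},
\end{equation*}
so both parts reduce to comparing the numerator and denominator as $n \to \infty$, using the non-negativity of $\Delta_f$ that comes from $f \geq 0$.

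For part (1) I would argue by contrapositive. Suppose $\int \tau \, d\mu_n$ does not tend to $\infty$; by passing to a subsequence one may assume $\int \tau \, d\mu_n \leq M$ for every $n$. From the description of $\tau$ in Section~\ref{ss:sr}, $\tau$ depends only on the first coordinate, and on the cylinder $C_{a^m}$ one has $\tau(x) \asymp d(o, a^m o)$ up to an additive constant (the same estimate exploited in the proof of Theorem~\ref{s_inftyshottky}). Since $d(o, a^m o) \to \infty$ as $|m| \to \infty$ for every generator $a$, the sublevel set $\{\tau \leq N\}$ is contained in a finite union of cylinders $C_{a^m}$. By (F1), $\Delta_f \geq c_N > 0$ on this finite union, while Markov's inequality gives $\mu_n(\{\tau \leq N\}) \geq 1 - M/N$. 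Choosing $N > 2M$, one obtains $\int \Delta_f \, d\mu_n \geq c_N/2$ and therefore $\int_{\Omega_0} f\, d\nu_n \geq c_N/(2M) > 0$, contradicting $\int f\, d\nu_n \to 0$.

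For part (2) I would perform a direct cut-off estimate. Given $\epsilon > 0$, condition (F2) produces a finite index $K$ such that on every cylinder $C_n$ with $n > K$ one has $\Delta_f(x) \leq \epsilon\, \tau(x)$ pointwise. Writing $A = \bigcup_{n \leq K} C_n$, the function $\Delta_f$ is bounded on $A$, since $\tau$ is bounded on each cylinder $C_{a^m}$ (continuity of the Busemann function on a compact subset of $\partial X$ away from the fixed points of the generators), so the integral defining $\Delta_f$ is taken along a bounded piece of orbit contained in a compact subset of $T^1X/\Gamma$ where the continuous function $f$ is bounded. Let $M_\epsilon$ be this bound. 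Then
\begin{equation*}
\int \Delta_f \, d\mu_n \;\leq\; M_\epsilon\, \mu_n(A) + \epsilon \int \tau \, d\mu_n \;\leq\; M_\epsilon + \epsilon \int \tau \, d\mu_n.
\end{equation*}
Dividing by $\int \tau \, d\mu_n$ and using $\int \tau \, d\mu_n \to \infty$ yields $\limsup_n \int f\, d\nu_n \leq \epsilon$; since $\epsilon$ was arbitrary and $f \geq 0$, (2) follows.

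The main obstacle is the finiteness step at the beginning of (1): one must verify that $\{\tau \leq N\}$ meets only finitely many cylinders $C_{a^m}$, and then combine the a priori cylinder-dependent lower bounds from (F1) into a uniform positive bound over that finite union. Both facts are essentially geometric: they use that $\tau$ depends only on the first coordinate and grows like $d(o, a^m o)$ modulo a bounded correction. Once this is in place, both halves of the lemma are short manipulations with Kac's formula.
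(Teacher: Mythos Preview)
Your proposal is correct and follows essentially the same route as the paper: both parts hinge on Kac's formula, with (1) handled by a Markov-inequality mass bound on $\{\tau \leq N\}$ combined with the (F1) lower bound on $\Delta_f$ over the resulting finite union of cylinders, and (2) by a tail cut-off using (F2). Your justification in (2) that $\Delta_f$ is bounded on the finite union $A$ is in fact more explicit than the paper's, which passes over this point silently.
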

\begin{proof}
To prove (1) we will argue by contradiction. Assume, passing to a sub-sequence if necessary, that
\begin{equation*}
\lim_{n \to \infty} \int \tau d \mu_n = C.
\end{equation*}
Let $\epsilon >0$, there exists $N \in \N$ such that for every $n>N$ we have that
\begin{equation*}
\left| 	 \int \tau d \mu_n - C	\right| < \epsilon.
\end{equation*}

\begin{lemma} \label{l:3}
Let $r >1$ then for every $n >N$ we have that
\begin{equation*}
\mu_n(\{x: \tau(x) \leq r\}) > 1- \frac{C+\epsilon}{r}.
\end{equation*}
\end{lemma}

\begin{proof}[Proof of Lemma \ref{l:3}]
Since the function $\tau$ is positive we have
\begin{equation*}
\int \tau d \mu_n \geq r \mu_n (\{x: \tau(x) \geq r\}) + \int_{\{x: \tau(x) \leq r\}} \tau d \mu_n.
\end{equation*}
Thus,
\begin{equation*}
C+ \epsilon \geq r \mu_n (\{x: \tau(x) \geq r\}).
\end{equation*}
\[ \frac{C+\epsilon}{r} \geq  \mu_n (\{x: \tau(x) \geq r\}).\]
Finally
\[\mu_n(\{x: \tau(x) \leq r\}) > 1- \frac{C+\epsilon}{r}.\]
\end{proof}
Note that the set $\{x: \tau(x) \leq r\}$ is contained in a finite union of cylinders on $\Sigma$. This follows from the inequality $d(o,a^m o)-C \leq \tau(x)$, which is a consequence of Lemma \ref{lem:bus_dist_comparison}, and the fact that $\mathcal{A}$ is finite. Since $\Delta_f$ is bounded away from zero in every one of them, there exist a constant $G(r)>0$ such that
\[\Delta_f(x) > G(r),\]
on $\{x: \tau(x) \leq r\}$. Thus
\begin{eqnarray*}
\int_{\Omega_0} f d \nu_n = \frac{\int_{\Sigma} \Delta_f(x) d\mu_n}{\int_{\Sigma} \tau d \mu_n }  \geq
  \frac{\int_{\{x: \tau(x) \leq r\}} \Delta_f(x) d\mu_n}{\int_{\Sigma} \tau d \mu_n }   \geq
  \frac{G(r)\left( 1- \frac{C+\epsilon}{r}\right)}{C-\epsilon}.
\end{eqnarray*}
If we choose $r$ large enough so that $ 1- \frac{C+\epsilon}{r}>0$ we obtain the desired contradiction.

To prove (2) observe that for every $\varepsilon>0$ there exists $N\geq 1$ such that for every $k\geq N$, we have
$$\dfrac{\sup \{\Delta_f(x):x\in C_k\}}{\inf \{\tau(x):x\in C_k\}}<\varepsilon.$$
Hence,
\begin{eqnarray*}
\lim_{n\to\infty} \int f d\nu_n &=& \lim_{n\to\infty} \frac{1}{\int \tau d\mu_n} \sum_{k\geq 1} \int_{C_k} \Delta_f d\mu_n \\
&=& \lim_{n\to\infty} \frac{1}{\int \tau d\mu_n} \sum_{k\geq N} \int_{C_k} \Delta_f d\mu_n \\
&\leq& \lim_{n\to\infty}\frac{1}{\int \tau d\mu_n} \sum_{k\geq N} \int_{C_k} \frac{\sup\{\Delta_f(x):x\in C_k \}}{\inf \{\tau(x):x\in C_k\}}\inf \{\tau(x):x\in C_k\} d\mu_n \\
&\leq& \lim_{n\to\infty}\frac{1}{\int \tau d\mu_n} \sum_{k\geq N} \int_{C_k} \varepsilon \inf \{\tau(x):x\in C_k\} d\mu_n \\
&\leq& \varepsilon.
\end{eqnarray*}
Since $\varepsilon>0$ is arbitrary, it follows the conclusion of the second claim.
\end{proof}

Combining Theorem \ref{thm:ec} and Lemma \ref{p:tau}, we obtain the following

\begin{lemma} \label{l:m}
Let $\Gamma$ be an extended Schottky group with property $(\star)$ and let $f$ a function satisfying property (F1). If
 $(\nu_n)_n \subset \M_{g}$  is a sequence of invariant probability measure for the geodesic flow such that
 \begin{equation*}
 \lim_{n \to \infty} \int_{T^1X/\Gamma} f d \nu_n =0.
 \end{equation*}
Then
 \[ \limsup _{n \to \infty} h_{\nu_n}(g) \leq \delta_{p,max}.\]
\end{lemma}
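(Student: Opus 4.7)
The strategy is to reduce to the symbolic model and combine Lemma \ref{p:tau}(1) with Theorem \ref{thm:ec} and Theorem \ref{s_inftyshottky}. The main subtlety is that $\nu_n$ need not be supported in $\Omega_0$, so I must first remove the contribution of the closed geodesics associated to the hyperbolic generators.

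First I would apply the ergodic decomposition. By Proposition \ref{prop:me}, the only ergodic measures outside $\M_{\Omega_0}$ are the finitely many $\nu^{h_i}$, each of zero entropy. Hence I can write
\[
\nu_n \;=\; \alpha_n\,\nu_n^{\Omega_0} \;+\; (1-\alpha_n)\,\nu_n^{hyp},
\]
where $\nu_n^{\Omega_0}\in \M_{\Omega_0}$, $\nu_n^{hyp}$ is a convex combination of the $\nu^{h_i}$, and $\alpha_n\in[0,1]$. Affineness of entropy over the ergodic decomposition, together with $h_{\nu^{h_i}}(g)=0$, gives
\[
h_{\nu_n}(g) \;=\; \alpha_n\, h_{\nu_n^{\Omega_0}}(g).
\]

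Next I exploit $f\geq 0$. The hypothesis $\int f\,d\nu_n\to 0$ and the non-negativity of both terms in $\alpha_n\int f\,d\nu_n^{\Omega_0}+(1-\alpha_n)\int f\,d\nu_n^{hyp}$ force $\alpha_n\int f\,d\nu_n^{\Omega_0}\to 0$. I then argue by passing to subsequences: given any subsequence of $(\nu_n)$, extract a further subsequence along which $\alpha_n\to \alpha\in[0,1]$. If $\alpha=0$, then since $h_{top}(g)<\infty$ one has $h_{\nu_n}(g)\leq \alpha_n\,h_{top}(g)\to 0\leq \delta_{p,max}$. If $\alpha>0$, then $\alpha_n$ is bounded below along this subsequence, whence $\int f\,d\nu_n^{\Omega_0}\to 0$.

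In the second case I transfer to the symbolic side. Via Theorem \ref{thm:geod_flow_susp_flow} and the Ambrose--Kakutani correspondence of Section \ref{ss:measure} (applicable because $\tau$ is bounded away from zero by Lemma \ref{lh}), each $\nu_n^{\Omega_0}$ corresponds to a probability measure $\mu_n\in\M_\sigma$ with $\nu_n^{\Omega_0}=(\mu_n\times m)|_Y/((\mu_n\times m)(Y))$. Lemma \ref{p:tau}(1), which only requires $f$ to satisfy (F1), yields $\int \tau\,d\mu_n\to \infty$. Theorem \ref{thm:ec} applied to $(Y,\Phi)$ then gives
\[
\limsup_{n\to\infty} h_{\nu_n^{\Omega_0}}(g) \;\leq\; s_\infty,
\]
and Theorem \ref{s_inftyshottky} identifies $s_\infty=\delta_{p,max}$. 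Combining with $\alpha_n\leq 1$, we obtain $\limsup h_{\nu_n}(g)\leq \delta_{p,max}$ along the extracted subsequence. Since every subsequence admits such a sub-subsequence, the bound holds for the full sequence.

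The main obstacle I foresee is not technical but bookkeeping: ensuring that the ergodic decomposition interacts correctly with both the entropy and the integral against $f$, and verifying that the symbolic representation from Section \ref{ss:sr} is available for the (possibly non-ergodic) piece $\nu_n^{\Omega_0}$. Both follow from the general framework set up earlier, so once the decomposition step is in place the rest is an invocation of the already established symbolic results.
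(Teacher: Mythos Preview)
Your proof is correct and follows essentially the same route as the paper: decompose each $\nu_n$ into its $\M_{\Omega_0}$-part and its hyperbolic-periodic-orbit part, use affineness of entropy and non-negativity of $f$ to control the two pieces, and then apply Lemma~\ref{p:tau}(1), Theorem~\ref{thm:ec}, and Theorem~\ref{s_inftyshottky} to the $\Omega_0$-part. Your subsequence dichotomy on $\alpha_n$ is in fact slightly more robust than the paper's argument, which directly asserts that the hyperbolic coefficients tend to zero (implicitly using $\nu^{h_i}(f)>0$); your version avoids this extra assumption.
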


\begin{proof} If $\nu_n\in \M_{\Omega_0}$ the Lemma follows directly from (1) in Lemma \ref{p:tau} and Theorem \ref{thm:ec}. If $\nu_n\in \M_g\setminus \M_{\Omega_0}$ then we can consider the measure $\tilde{\nu}_n:=\nu_n-\sum_{i=1}^{N_1}c^n_i\nu^{h_i}$ where the constants $c^n_i\ge 0$ are chosen so that $\tilde{\nu}_n$(periodic orbit associated to hyperbolic generator $h_i)=0$. Let $C_n=\tilde{\nu}_n(X/\Gamma)$. If $C_n=0$ then $h(\nu_n)=0$ and there is not contribution to the desired $\limsup h(\nu_n)$. Otherwise define $u_n=C_n^{-1}\tilde{\nu}_n$. By definition $u_n$ is a probability measure in $\M_{\Omega_0}$, we claim $\lim_{n\to \infty} \int fdu_n=0$.
Observe $\int f d\nu_n= \int f d\tilde{\nu}_n+\sum_{i=1}^{N_1}c^n_i\nu^{h_i}(f)$ has non-negative summands and it is converging to zero, it follows that $ \int f d\tilde{\nu_n}\to 0$, $c^n_i\to 0$ as $n\to \infty$. By definition $C_n=1-\sum_{i=1}^{N_1} c^n_i$, therefore $C_n\to 1$. Recalling $u_n=C_n^{-1}\tilde{\nu}_n$ we get $\lim \int fdu_n=0$. Because $\nu_n=C_n u_n+(1-C_n)(1-C_n)^{-1}\sum_{i=1}^{N_1}c^n_i\nu^{h_i}$, we have $h_{\nu_n}(g)=C_n h_{u_n}(g)$. Finally since $C_n\to 1$ and $\limsup_{n\to \infty} h_{u_n}(g)\le \delta_{p,max}$ (because $u_n\in\M_{\Omega_0}$), we get $\limsup_{n\to \infty} h_{\nu_n}(g)\le \delta_{p,max}$.
\end{proof}


The next Theorem is the main result of this sub-section and it is an adaptation of results obtained at a symbolic level in \cite{ij}. It is possible to translate those symbolic results into this geometric setting thanks to Theorem  \ref{s_inftyshottky}.

\begin{theorem-non}[\ref{thm:gpt}]
Let $X$ be a Hadamard manifold with pinched negative sectional curvature and let $\Gamma$ be an extended Schottky group of isometries of $X$ with property $(\star)$. Assume that the derivatives of the sectional curvature are uniformly bounded. If $f \in \mathcal{F}$, then
\begin{enumerate}
\item For every $t \in \R$ we have that $P_g(tf) \geq \delta_{p,max}$.
\item We have that $\lim_{t \to -\infty} P_g(tf)= \delta_{p,max}$.
\item Let $t':= \sup \{t \in\R: P_g(tf) = \delta_{p,max}\}$, then
\begin{equation*}
P_g(tf)=
\begin{cases}
\delta_{p,max} & \text{ if } t <t'; \\
\text{real analytic, strictly convex, strictly increasing} & \text{ if } t>t'.
\end{cases}
\end{equation*}
\item If $t>t'$, the potential $tf$ has a unique equilibrium measure. If $t<t'$ it has no equilibrium measure.
\end{enumerate}
\end{theorem-non}

Note that Theorem \ref{thm:gpt} shows that when $t'$ is finite then the pressure function exhibits a phase transition at $t=t'$ whereas when $t'=-\infty$ the pressure function is real analytic where defined (see Figure below). Recall that $\delta_{p,max}=s_{\infty}$.

\begin{center}
\begin{tikzpicture}[scale=2]

\small
\draw [>= stealth, ->](0,0)--(2.5,0) node[below=12pt,midway]{$\textmd{No phase transitions}$};
\draw [>= stealth, ->](3.5,0)--(6,0) node[below=12pt,midway]{$\textmd{Phase transition at } t=t'$};

\draw (2.5,0) node[below]{$t$};
\draw (6,0) node[below]{$t$};

\draw [>= stealth, ->](2,-0.1)--(2,2);
\draw [>= stealth, ->](5.5,-0.1)--(5.5,2);

\draw (2,2) node[left]{$P_{g}(tf)$};
\draw (5.5,2) node[left]{$P_{g}(tf)$};

\small

\draw (2,1.5) node{$\bullet$} node[right]{$\delta_{\Gamma}$};
\draw (5.5,1.5) node{$\bullet$} node[right]{$\delta_{\Gamma}$};

\draw[color=gray!50] [dashed](0,0.5)--(2.5,0.5);
\draw[color=gray!50] [dashed](3.5,0.5)--(6,0.5);

\draw (2,0.5) node{$\bullet$} node[above right]{$\delta_{p,max}$};
\draw (5.5,0.5) node{$\bullet$} node[above right]{$\delta_{p,max}$};

\draw [domain=0:2.4]plot(\x,{exp(\x-2)+0.5});

\draw [domain=4.2:5.7]plot(\x,{1.374*exp(\x-5.5)+0.125});
\draw [domain=3.5:4.2]plot(\x,0.5);

\draw (4.2,0) node{$\bullet$} node[below]{$t'$};

\end{tikzpicture}
\end{center}

\begin{proof}[Proof of (1)]
The first claim follows from the variational principle. By Theorem \ref{thm:ent_cusp_flow_symbolic_model} there exists a sequence $(\nu_n)\subset \M_g$ such that $\lim_{n\to\infty} h_{\nu_n}(g)=s_\infty$ and their corresponding probability $\sigma$-invariant measures $(\mu_n)$ in $\Sigma$ satisfy $\lim_{n\to\infty}\int \tau d\mu_n = \infty$. Therefore, by (2) in Lemma \ref{p:tau}, we also have that $\lim_{n\to\infty} \int f d\nu_n = 0$. Hence, for every $t \in \R$, we have
\begin{eqnarray*}
s_{\infty}= \delta_{p, max}&=& \lim_{n \to \infty} \left(h_{\nu_n}(g) + t \int_{\Omega_0} f d \nu_n  \right)\\
&\leq& \sup \left\{ h_{\nu}(g) + t \int f d \nu : \nu \in \M_g \right\} =P_g(tf).
\end{eqnarray*}
\end{proof}

\begin{proof}[Proof of (2)] Since $t\mapsto P_g(tf)$ is non-decreasing and bounded below, the following limit $\lim_{t \to -\infty} P_g(tf)$ exists. Define $A\in\R$ as the limit $\lim_{t \to -\infty} P_g(tf):=A$. Using the Variational Principle, we can choose a sequence of measures $(\nu_n)_n$ in $\M_g$ for which
$$\lim_{n\to\infty} h_{\nu_n}(g)-n\int f d\nu_n = A.$$
Since $A$ is finite it follows that $\lim_{n \to \infty}  \int f d\nu_n=0$. Hence, from Lemma \ref{l:m}, we obtain $\limsup _{n \to \infty} h_{\nu_n}(g) \leq  s_{\infty}$. In particular,
\begin{eqnarray*}
s_\infty &\leq& \lim_{t \to -\infty} P_g(tf) \\
&=& \lim_{n\to\infty} h_{\nu_n}(g)-n\int f d\nu_n\\
&\leq&  \lim_{n\to\infty} h_{\nu_n}(g) \leq s_{\infty}.
\end{eqnarray*}
Therefore, we have that $A=\delta_{p, max}$.
\end{proof}

\begin{proof}[Proof of (3): Real analyticity]
We first prove $P_g(tf)=P_{\Omega_0}(tf)$. After this is done we can proceed with standard regularity arguments in the symbolic picture. Observe that for $t<0$ the pressure $P_{\Omega_0}(tf)$ is always positive while the contribution of the pressure on $(T^{1}X/\Gamma) \setminus \Omega_0$ is negative, so $P_g(tf)=P_{\Omega_0}(tf)$ for every $t\leq 0$. Consider now $t>0$. Pick $\nu^{h_i}$ as in Proposition \ref{prop:me} (see also Definition \ref{def:press}). Denote by $x_-$ (resp. $x_+$) the repulsor (attractor) of $h_i$ and $\gamma_{h_i}$ the geodesic defined by those points. Consider $p$ a parabolic element in $\A$ and let $\gamma_n$ be the geodesic connecting the points $\xi^-$ and $\xi^+$ where $\omega(\xi^-)=\overline{p^{-1}h^{-n}}$ and $\omega(\xi^+)=\overline{h^np}$. Denote $\gamma_\infty$ the geodesic connecting $p^{-1}x_-$ and $x_+$. Observe $\gamma_n$ descends to a closed geodesic in  $T^1X/\Gamma$. By comparing $\gamma_n$ and $\gamma_\infty$ we see that for any $\epsilon>0$, the amount of time $\gamma_n$ leaves a $\epsilon$-neighborhood of $\gamma_{h_i}$ is uniformly bounded for big enough $n$. Let $\nu_n$ be the invariant probability measure defined by the closed geodesic $\gamma_n$, then we get the weak convergence $\nu_n\to \nu^{h_i}$. Then
$$t\int f d\nu^{h_i}=\lim_{n\to\infty}t\int f d\nu_n\le \lim_{n\to\infty} (h_{\nu_n}(g)+t\int f d\nu_n)\le P_{\Omega_0}(tf).$$
This give us $P_g(tf)=P_{\Omega_0}(tf)$.

The pressure function $t \mapsto P_g(tf)$ is convex, non-decreasing and bounded from below by $s_{\infty}$. We now prove that
for $t>t'$ it is real analytic. Note that since $t'<t$ we have that
\[P_\sigma(t\Delta_f -s_{\infty}\tau)>0,\]
possibly infinity and that there exists $p>s_{\infty}$ such that $0<P_\sigma(t\Delta_f -p\tau)<\infty$ (see \cite[Lemma 4.2]{ij}). Moreover, Condition $(F2)$ implies that $P_g(tf)<\infty$ for every $t>t'$, hence
\[P_\sigma(\Delta_{tf} -P_g(tf) \tau) \leq 0.\]
Since $\tau$ is a positive, the function $s \mapsto P_\sigma(t\Delta_f - s\tau)$ is decreasing and
$$\lim_{s \to +\infty}  P_\sigma(t\Delta_f - s \tau) = -\infty.$$
Moreover, since the base map of the symbolic model satisfies the BIP condition, the function $(s,t)\mapsto P_\sigma(t\Delta_f - s\tau)$ is real analytic in both variables. Hence, there exists a unique real number $s_f>s_{\infty}$  such that $P_\sigma(t\Delta_f -s_f \tau)=0$  and
$$\frac{\partial}{\partial s} P_\sigma(t\Delta_f - s\tau) \Big|_{s=s_f} <0.$$
Therefore, $P_g(tf)=s_f$ and by Implicit Function Theorem, the function $t \mapsto P_g(tf)$ is real analytic in $(t', t^\star)$.
\end{proof}

\begin{proof}[Proof of (4)]
First note that the previous claims imply that no zero entropy measure can be an equilibrium measure. Moreover, in the proof of (3) we obtained that for $t \in (t', \infty)$ we have that $P_\sigma(t\Delta_f -P_g(tf) \tau)=0$. Since the system satisfies the BIP condition there exists an equilibrium measure $\mu_f \in \M_{\sigma}$ for $t\Delta_f -P_g(tf) \tau$ such that $\int \tau d \mu_f <\infty$ (see Theorem \ref{thm_es}). Therefore it follows from Proposition \ref{prop_es} that $tf$ has an equilibrium measure.

In order to prove the last claim, assume by contradiction that for some $t_1 <t'$ the potential $t_1f$ has an equilibrium measure $\nu_{t_1}$. Then $s_{\infty}= P_g(t_{1}f)= h_{\nu_{t_{1}}}(g) + t_{1} \int_{\Omega_0} f d \nu_{t_{1}}$. Since $f>0$ on $\Omega_0$, we have that $\int_{\Omega_0} f d \nu_{t_{1}} :=B >0$.  Thus the straight line $r \to  h_{\nu_{t_{1}}}(g) + r \int_{\Omega_0} f d \nu_{t_{1}}$ is increasing with $r$, therefore for $t \in (t_{1}, t')$ we have that
\begin{equation*}
h_{\nu_{t_{1}}}(g) + t \int_{\Omega_0} f d \nu_{t_{1}} > s_{\infty}=P_g(tf).
\end{equation*}
This contradiction proves the statement.
\end{proof}

\subsection{Examples}

We will use the following criterion, first introduced in \cite{ij}, to construct phase transitions.

\begin{proposition}\label{phtr} Let $f \in \F$. Then
\begin{enumerate}
\item If there exist $t_0\in\R$ such that $P_\sigma(t_0\Delta_f-s_\infty \tau)<\infty$, then  there exists $t'<t_0$ such that for every $t <t'$ we have $P_g(tf)=s_\infty$.
\item  Suppose that there exists an interval $I$ such that $P_\sigma(t\Delta_f-s_\infty \tau)=\infty$ for every $t\in I$. Then $t\mapsto P_g(tf)$ is real analytic on $I$. In particular, if for every $t\in\R$ we have $P_\sigma(t\Delta_f-s_\infty \tau)=\infty$, then $t\mapsto P_g(tf)$ is real analytic in $\R$.
\end{enumerate}\end{proposition}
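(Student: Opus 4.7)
The plan is to exploit the flow-vs-shift pressure identity together with our setup where $s_\infty=\delta_{p,\max}$ and the group is of divergence type (Condition (C4)), so that $P_\sigma(-s\tau)$ is infinite for $s\leq s_\infty$. This gives the key equivalence $P_g(tf)=s_\infty$ if and only if $\varphi(t):=P_\sigma(t\Delta_f-s_\infty\tau)\leq 0$. I will treat the two parts in parallel since they share this framework, with part (1) handling the ``finite'' side and part (2) the ``infinite'' side.

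For part (1), I first record that $\varphi$ is non-decreasing (since $\Delta_f\geq 0$), convex (as a supremum of linear functions of $t$), and finite on $(-\infty,t_0]$ by hypothesis plus monotonicity. By BIP (Lemma \ref{lema:mix}) and local H\"older regularity, $\varphi$ is real analytic where finite (Theorem \ref{bip}). Next, Theorem \ref{thm:gpt}(2) yields $\lim_{t\to-\infty}P_g(tf)=s_\infty$, which combined with the characterization above forces $\lim_{t\to-\infty}\varphi(t)=0$. The final step is to promote this limit to an attained value: show $\varphi(t')\leq 0$ at some finite $t'\leq t_0$. For this I would study the equilibrium measures $\mu_t$ for $t\Delta_f-s_\infty\tau$ (available via BIP) as $t\to-\infty$; condition (F2) forces $\mu_t$ to concentrate on high-$\tau$ cylinders, Lemma \ref{p:tau} then gives $\int\tau\, d\mu_t\to\infty$, and Theorem \ref{thm:ec} applied to the suspension semi-flow gives $h_{\mu_t}/\int\tau\, d\mu_t\to s_\infty$. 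Inserting these into $\varphi(t)=h_{\mu_t}+t\int\Delta_f\, d\mu_t-s_\infty\int\tau\, d\mu_t$ together with the hypothesis $\varphi(t_0)<\infty$ should drive $\varphi(t)$ down to $0$ at a finite $t'$. Once $\varphi(t')\leq 0$ is established, $P_g(tf)=s_\infty$ for every $t<t'$ is immediate from the monotonicity of $\varphi$.

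For part (2), fix a point $t\in I$. Since $\varphi(t)=+\infty$, the quantity $P_g(tf)-s_\infty$ is strictly positive, and $P_g(tf)$ equals the unique root $s^*>s_\infty$ of $s\mapsto F(t,s):=P_\sigma(t\Delta_f-s\tau)$. On the open region $\{(t,s):t\in I,\,s>s_\infty\}$ the function $F$ is finite and, by BIP together with local H\"older regularity of the potentials, jointly real analytic (Theorem \ref{bip} applied with parameter $t$). The partial derivative $\partial_s F=-\int\tau\, d\mu_{t,s}$ is strictly negative, since $\tau$ is bounded below by Lemma \ref{lh}. The implicit function theorem applied to $F(t,P_g(tf))=0$ then yields that $t\mapsto P_g(tf)$ is real analytic on all of $I$, with the second assertion following by taking $I=\R$.

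The main obstacle I anticipate is the final step of part (1): converting the asymptotic statement $\lim_{t\to-\infty}\varphi(t)=0$ into an attained value $\varphi(t')\leq 0$ at a finite $t'$. In general a convex real-analytic function can approach its infimum purely asymptotically (consider $e^t$), so this step genuinely requires the geometric input from (F1), (F2) and the escape-of-mass results to rule out such behavior; concretely, I expect one argues that the equilibrium measures $\mu_t$ escape through the cusps at a definite rate dictated by (F2), and this rate forces the supremum defining $\varphi$ to be realized at a finite parameter value rather than only in the limit.
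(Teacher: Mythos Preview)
The paper does not prove this proposition; it simply points to \cite[Lemma 4.5, Theorem 4.1]{ij}. So there is no in-paper proof to compare against, only the general style of arguments used there. Your treatment of part (2) is correct and is exactly the implicit-function-theorem argument already used in the proof of Theorem~\ref{thm:gpt}(3): once $P_\sigma(t\Delta_f-s_\infty\tau)=\infty$, the root of $s\mapsto P_\sigma(t\Delta_f-s\tau)$ lies strictly above $s_\infty$, the two-variable pressure is real analytic there by BIP, and $\partial_s F<0$ because $\tau$ is bounded below.

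Part (1), however, has a genuine gap at the point you yourself flag, and the gap is slightly different from how you describe it. Your inference ``$\lim_{t\to-\infty}P_g(tf)=s_\infty$ forces $\lim_{t\to-\infty}\varphi(t)=0$'' does not follow from the characterisation $P_g(tf)=s_\infty\Leftrightarrow\varphi(t)\le 0$. In the very regime you are trying to reach (a phase transition at a finite $t'$) one has $\varphi(t)\le 0$ for all $t<t'$, and typically $\varphi(t)\to-\infty$; conversely, if $\varphi(t)>0$ for all $t$ one cannot read off $\varphi(t)\to 0$ from $P_g(tf)\to s_\infty$, since by the mean value theorem $\varphi(t)=(P_g(tf)-s_\infty)\int\tau\,d\mu_{t,s^*}$ and the second factor may diverge. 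So the ``obstacle'' is not to upgrade a limit equal to $0$ to an attained value, but rather to show directly that $\varphi(t)\le 0$ for some finite $t$ (equivalently, to rule out that $\varphi$ stays strictly positive on all of $(-\infty,t_0]$).

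A cleaner line, closer to what \cite{ij} does and avoiding the circular-looking appeal to Theorem~\ref{thm:gpt}(2), is to argue with the one-variable convex function $\varphi$ itself. By convexity and monotonicity either $\varphi'(t)\ge\alpha>0$ for all $t$, in which case $\varphi(t)\le\varphi(t_0)-\alpha(t_0-t)\to-\infty$ and you are done, or $\varphi'(t)=\int\Delta_f\,d\mu_t\to 0$ along the equilibrium measures $\mu_t$. In the second case (F1) forces $\mu_t(C_k)\to 0$ for every cylinder $C_k$; combining with the estimate $\tau\ge d(o,a^m o)-C$ on $C_{a^m}$ (Lemma~\ref{lem:bus_dist_comparison}) gives $\int\tau\,d\mu_t\to\infty$, and then one bounds the Gurevich partition sums for $t\Delta_f-s_\infty\tau$ termwise by those for $t_0\Delta_f-s_\infty\tau$ and uses dominated convergence to push $\varphi(t)$ below $0$. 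This last step is the substantive content that your sketch leaves open; the escape-of-mass route you propose can be made to work, but you should aim it at $\varphi$ directly rather than routing through $P_g(tf)$.
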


The proof of this Lemma follows as in \cite[Lemma 4.5, Theorem 4,1]{ij}. We now present an example of a phase transition (Example \ref{ex_pt}) and another one with pressure real analytic everywhere (Example \ref{ex:npt}). A  useful lemma in order to construct an example of a phase transition, is the following

\begin{lemma} \label{l:ex:2}
Let $(a_n)_n$ be a sequence of positive real numbers such that $\sum_{n=1}^{\infty}a_n^t$ converges for every $t>t^\ast$ and diverges at $t=t^\ast$. Then there exists a sequence $(\varepsilon_n)_n$ of positive numbers such that $\lim_{n\to\infty} \varepsilon_n = 0$ and
$$\sum_{n=1}^{\infty}a_n^{t^\ast+\varepsilon_n}<\infty.$$
\end{lemma}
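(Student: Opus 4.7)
The plan is to perform a standard diagonal (or tail) construction: we partition the indices into blocks where $\varepsilon_n$ takes a fixed value, with that fixed value tending to zero block by block, but with block sizes chosen small enough that the resulting series converges.

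First I would observe that the hypotheses force $a_n \to 0$: for any fixed $t > t^\ast$ the convergence of $\sum a_n^t$ implies $a_n^t \to 0$, hence $a_n \to 0$. In particular $a_n < 1$ for all large $n$, which means raising $a_n$ to a slightly larger exponent only makes the terms smaller — this is morally why we should have room to perturb $t^\ast$ upward by amounts tending to $0$.

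Next I would set up the following data. For each integer $k \geq 1$, the series $\sum_{n} a_n^{t^\ast + 1/k}$ converges by hypothesis, so I can choose a strictly increasing sequence of indices $N_1 < N_2 < \cdots$ such that
\begin{equation*}
\sum_{n \geq N_k} a_n^{t^\ast + 1/k} < \frac{1}{2^k}.
\end{equation*}
Then I would define $\varepsilon_n := 1/k$ whenever $N_k \leq n < N_{k+1}$, and (to cover the finitely many initial indices) set $\varepsilon_n := 1$ for $n < N_1$. By construction $\varepsilon_n > 0$ for every $n$ and $\varepsilon_n \to 0$ as $n \to \infty$.

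Finally I would verify convergence by breaking the sum into the blocks: the initial finite piece $\sum_{n < N_1} a_n^{t^\ast + 1}$ is a finite sum of finite terms, while
\begin{equation*}
\sum_{n \geq N_1} a_n^{t^\ast + \varepsilon_n}
= \sum_{k \geq 1} \sum_{N_k \leq n < N_{k+1}} a_n^{t^\ast + 1/k}
\leq \sum_{k \geq 1} \sum_{n \geq N_k} a_n^{t^\ast + 1/k}
< \sum_{k \geq 1} \frac{1}{2^k} = 1.
\end{equation*}
There is no real obstacle here — the argument is a routine tail/diagonal trick — so I do not anticipate any technical difficulty. The only thing to be careful about is that the blocks really are disjoint and exhaust all sufficiently large $n$, which is guaranteed by the $N_k$ being strictly increasing.
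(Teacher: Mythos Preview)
Your proof is correct and is essentially identical to the paper's own argument: both choose a strictly increasing sequence $N_1<N_2<\cdots$ so that the tail $\sum_{n\ge N_k}a_n^{t^\ast+\alpha_k}$ is summable in $k$ (you use $\alpha_k=1/k$ and bound by $2^{-k}$, the paper uses a generic null sequence $\alpha_m\in(0,1]$ and bound by $1/m^2$), then set $\varepsilon_n$ constant on each block $[N_k,N_{k+1})$. Your preliminary remark that $a_n\to 0$ is true but not actually needed for the argument.
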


\begin{proof}[Proof of Lemma \ref{l:ex:2}] Let $(\alpha_m)_m$ any sequence of real numbers in $(0,1]$ converging to zero. Note that, for every $m\geq 1$ we have
$$\sum_{n=1}^{\infty}a_n^{t^\ast+\alpha_m}<\infty.$$
Then, there exists an integer $N_m\geq 1$ such that $$\sum_{n=N_m}^{\infty}a_n^{t^\ast+\alpha_m}<1/m^2.$$
We can suppose without loss of generality that $N_m<N_{m+1}$. Define $\varepsilon_n$ for every $N_m\leq n < N_{m+1}$ as $\varepsilon_n=\alpha_m$, and $\varepsilon_n=1$ for $1\leq n < N_1$. Thus
\begin{eqnarray*}
\sum_{n=1}^{\infty} a_n^{t^\ast+\varepsilon_n} &=&  \sum_{n=1}^{N_{1}-1} a_n^{t^\ast+\varepsilon_n}+ \sum_{m=1}^{\infty} \sum_{n=N_m}^{N_{m+1}-1} a_n^{t^\ast+\varepsilon_n}\\
&=& \sum_{n=1}^{N_{1}-1} a_n^{t^\ast+1}+ \sum_{m=1}^{\infty} \sum_{n=N_m}^{N_{m+1}-1} a_n^{t^\ast+\alpha_m}\\
&\leq& \sum_{n=1}^{N_{1}-1} a_n^{t^\ast+1}+ \sum_{m=1}^{\infty} \sum_{n=N_m}^{\infty} a_n^{t^\ast+\alpha_m}\\
&\leq& \sum_{n=1}^{N_{1}-1} a_n^{t^\ast+1}+ \sum_{m=1}^{\infty} 1/m^2\\
&<& \infty.
\end{eqnarray*}
\end{proof}

\begin{example}(Phase transition) \label{ex_pt}  Let $\Gamma$ be a Schottky group with property $(\star)$ and assume that there are at least 2 different cusps, i.e. $N_2\ge 2$. Moreover assume there exists a unique parabolic generator $p$ with $\delta_{p}=\delta_{p,max}$. Recall that the series $\sum_{m\in\Z} e^{-\delta_p d(o,p^m o)}$ diverges since $p$ is a parabolic isometry of divergence type. Take a decreasing sequence of real numbers $\varepsilon_m>0$, as in Lemma \ref{l:ex:2}, such that $\lim_{m\to\infty}\varepsilon_m = 0$ and $\sum_{m\in\Z} e^{-(\delta_{p}+\varepsilon_m)d(o,p^m o)}<\infty$. Define a function $f^0:\Sigma\to\R^+$ by
\begin{enumerate}
\item $f^0(x)=\varepsilon_m\tau(x)$ if the first symbol of $x$ is $p^m$ for some $m\in\Z$.
\item $f^0(x)=1$ otherwise.
\end{enumerate}
Observe that since $\tau$ is locally H\"older, the function $f^0$ is also locally H\"older. We first see that $f^0\in \F$, for this it is enough to check that Condition (F2) holds. There exist constant $C$ independent of $m$ such that $d(o,p^m o)-C\le \tau(x)$ whenever $x\in C_{p^m}$. Then, if $x,y\in C_{p^m}$ we have $\tau(x)/\tau(y)\le d(o,p^m o)/ (d(o,p^m o)-C)$, i.e. $\sup_{x\in C_{p^m}}\tau(x)/\inf_{x\in C_{p^m}}\tau(x)$ is uniformly bounded in $m$, this implies Condition (F2).
 As shown in \cite[Section 2]{brw}, we can construct a continuous function $f:Y\to \R$ with $\Delta_{f}=f^0$. We define $t:\Sigma \to \R$ by $t(x)=(s_\infty+\varepsilon_m)$ if the first symbol of $x$ is $p^m$, and $s_\infty$ otherwise. By simplicity we will denote $s(a^m)=t(x)$ if the first symbol of $x$ is $a^m$. Following notations and ideas of the second part of the proof of Theorem \ref{s_inftyshottky}, we obtain,

\begin{eqnarray*}
P_{\sigma}(-\Delta_f-s_\infty\tau)&=& \lim_{n \to \infty} \frac{1}{n} \log \sum_{x:\sigma^{n}x=x} \exp \left(\sum_{i=0}^{n-1} -(\Delta_f(\sigma^ix)+s_\infty\tau(\sigma^{i}x))\right)  \chi_{C_{h_{1}}}(x)\\
&\leq& \lim_{n \to \infty} \frac{1}{n} \log \sum_{x:\sigma^{n}x=x} \exp \left(\sum_{i=0}^{n-1} -\tau(\sigma^i x)t(\sigma^i x)\right)  \chi_{C_{h_{1}}}(x)\\
&\leq& \lim_{n \to \infty} \frac{1}{n} \log \sum_{a_{1},...,a_{n}}\sum_{m_{1},...,m_{n}} \prod_{i=1}^{n}C^{s(a_{i}^{m_i})}e^{-s(a_{i}^{m_i}) d(o,a_{i}^{m_i}o )}\\
&\leq& \lim_{n \to \infty} \frac{1}{n} \log C^{n(s_\infty+1)}\sum_{a_{1},...,a_{n}}\sum_{m_{1},...,m_{n}} \prod_{i=1}^{n}e^{-s(a_{i}^{m_i}) d(o,a_{i}^{m_i}o )}\\
&=& \lim_{n \to \infty} \frac{1}{n} \log C^{n(s_\infty+1)}\left(\sum_{a\in \mathcal{A}}\sum_{m\in\Z} e^{-s(a^m) d(o,a^m o )} \right)^{n}\\
&=&  \log C^{s_\infty+1}\left(\sum_{a\in \mathcal{A}}\sum_{m\in\Z} e^{-s(a^m) d(o,a^m o )} \right).\\
\end{eqnarray*}
Observe that $\sum_m e^{-s(a^m) d(o,a^m o )}$ converges for every $s>\delta_{a}$ and every $a\neq p$. On the other hand, the series $\sum_{m\in\Z} e^{-(\delta_{p}+\varepsilon_m)d(o,p^m o)}$ is finite by construction. In particular $P_\sigma(-\Delta_f-s_\infty\tau)$ is finite. Observe that $f$ is a potential belonging to the family $\F$, then from Proposition \ref{phtr} it follows that $t\mapsto P_g(tf)$ exhibits a phase transition.\\
\end{example}

\begin{example}(No phase transition)  \label{ex:npt} Let $\Gamma$ be a Schottky group with property $(\star)$. Define $f^0:\Sigma\to\R^+$ to be constant of value 1 and construct a continuous function $f:Y\to \R$ with $\Delta_{f}=f^0$. Observe
$$P_{\sigma}(t-s_\infty\tau)=t+P_\sigma(-s_\infty\tau)=\infty.$$
Recall  that $P_\sigma(-s_\infty\tau)=\infty$, because the maximal parabolic generator is of divergence type (see the first part of Theorem \ref{s_inftyshottky}). Since $\tau$ is unbounded and $f^0$ is constant, we can apply Proposition \ref{phtr} to show that $t\mapsto P_g(tf)$ is real analytic in $\R$. In particular, it never attains the lower bound $s_\infty$.

\end{example}

\begin{remark}
In Example \ref{ex_pt} and Example \ref{ex:npt}, the potential $f$ is defined (a priori) only on the set $\Omega_0$. To extend it continuously to the entire manifold $T^1X/\Gamma$, it is enough to define it to be equal to $0$ on the complement $(T^1X/\Gamma) \setminus \Omega_0$.
\end{remark}

\end{document}